\newtheorem{theorem}{Theorem}[section]
\newtheorem{lemma}[theorem]{Lemma}
\newtheorem{proposition}[theorem]{Proposition}
\newtheorem{definition}[theorem]{Definition}
\newtheorem{hypothesis}[theorem]{Hypothesis}
\newtheorem{example}[theorem]{Example}
\numberwithin{equation}{section}
\newenvironment{proof}[1][Proof]{\noindent\textit{#1.} }{\hfill \rule{0.5em}{0.5em}}
\begin{document}
	
	\title{Large deviation principle for multi-scale distribution dependent stochastic differential equations driven by fractional Brownian motions }
	\date{\today}
	
	\author {Guangjun Shen, Huan Zhou and Jiang-Lun Wu\thanks{Corresponding author. This research is supported by the
			National Natural Science Foundation of China (12071003)}}

\maketitle

\begin{abstract}
		In this paper, we are concerned with multi-scale distribution dependent stochastic differential equations driven by fractional Brownian motion (with Hurst 
	index $H>\frac12$) and standard Brownian motion, simultaneously. Our aim is to establish a large deviation principle for the multi-scale distribution dependent stochastic differential equations. This is done via the weak convergence approach and our proof is based heavily on the fractional calculus. 
	
	\vskip.2cm \noindent {\bf Keywords:}  Distribution dependent stochastic differential equations; Fractional Brownian motion; Large deviations principle;  Weak convergence approach
	\vskip.2cm \noindent {\bf Mathematics Subject Classification (2020): 60G22;  60H10; 60F10.}
\end{abstract}

\section{Introduction}\label{sec1}
As is well known, stochastic differential equations (SDEs) play a significant role in modelling evolutions of dynamical systems when taking into account uncertainty features
in diverse fields ranging from biology, chemistry, and physics, as well as ecology, economics and finance, and so on (see, for example,
Sobczyk \cite{Kazimier} and the references therein).
Generally, nonlinear Fokker-Planck equations can be characterised by distribution dependent stochastic differential equations (DDSDEs), which are also named as McKean-Vlasov SDEs or mean field SDEs. DDSDEs could be used to describe stochastic systems whose evolution is influenced by both the microcosmic location and the macrocosmic distribution of particles, i.e., the coefficients of equations depend not only on the solution itself but also on its time marginal law. As such, there have been many applications of DDSDEs in numerous fields such as statistical physics, mean-field games, mathematical finance and biology (see, for example, Buckdahn et al. \cite{Buckdahn}, Bossy and Talay \cite{Bossy}, Carmona and Delarue \cite{Carmona}, and the references therein). Recently, there has been an increasing interest in studying existence and uniqueness for solutions of DDSDEs. Wang \cite{Wang} established strong well-posedness of DDSDEs with one-sided Lipschitz continuous drifts and Lipschitz-continuous dispersion coefficients. Under integrability conditions on distribution dependent coefficients, Huang and Wang \cite{Huang1} obtained the existence and uniqueness for DDSDEs with non-degenerate noise.  Mehri and Stannat \cite{Mehri} proposed a Lyapunov-type approach to the problem of existence and uniqueness of general DDSDEs. Many interesting studies of DDSDEs have been developed further in Bao et al. \cite{Bao}, 
Ren and Wang \cite{Ren}, R\"{o}ckner and Zhang \cite{Rockner},  Mishura and Veretennikov \cite{Mishura}, Chaudru and Raynal \cite{Chaudru}, Hammersley et al. \cite{Hammersley}, just mention a few. Although there exist many investigations in the literature devoted to studying DDSDEs driven by Brownian motion, L\'evy processes, as we know, there is few consideration   for DDSDEs driven   by fractional Brownian motion (fBm) which is neither a Markov process nor a semimartingale.  Fan et al. \cite{Fan} considered the following DDSDEs driven by fBm with Hurst parameter $H>\frac12$
\begin{equation*}\label{sec1-eq1.1}
	dX_{t}=b(t,X_{t},\mathscr{L}_{X_{t}})dt+\sigma(t,\mathscr{L}_{X_{t}})dB^{H}_t
\end{equation*}
by showing the well-posedness and derived a Bismut type formula for the Lions derivative using Malliavin calculus. Galeati et al. \cite{Galeati} studied DDSDEs with irregular, possibly distributional drift, driven by additive fBm of Hurst parameter $H\in (0,1)$ and established strong well-posedness under a variety of assumptions on the drifts. Shen, Xiang and Wu \cite{Shen1} studied averaging principle of DDSDEs driven simultaneously by fBm with Hurst index $H>\frac12$ and standard Brownian motion under certain averaging conditions.
Buckdahn et al. \cite{Buckdahn} considered mean-field SDEs driven by fBm and related stochastic control problems. Bauer and Meyer-Brandis \cite{Bauer} established existence and uniqueness results of solutions to McKean-Vlasov equations driven by cylindrical fBm in an infinite-dimensional Hilbert space setting with irregular drift.

In this paper, we will study the following system of multi-scale DDSDEs with small fractional noises
\begin{equation}\label{sec1-eq1.2}
	\left\{
	\begin{aligned}
		&dX_{t}^{\delta,\epsilon}=b(t,X_{t}^{\delta,\epsilon},\mathscr{L}_{X_{t}^{\delta,\epsilon}},Y_{t}^{\delta,\epsilon})dt+\delta^{H}\sigma(t,\mathscr{L}_{X_{t}^{\delta,\epsilon}})dB_{t}^{H},\\
		&dY_{t}^{\delta,\epsilon}=\frac{1}{\epsilon}f(t,X_{t}^{\delta,\epsilon},\mathscr{L}_{X_{t}^{\delta,\epsilon}},Y_{t}^{\delta,\epsilon})dt+\frac{1}{\sqrt{\epsilon}}g(t,X_{t}^{\delta,\epsilon},\mathscr{L}_{X_{t}^{\delta,\epsilon}},Y_{t}^{\delta,\epsilon})dW_{t},\\
		&X_{0}^{\delta,\epsilon}=x\in \mathbb{R}^{n},\qquad Y_{0}^{\delta,\epsilon}=y\in \mathbb{R}^{m},
	\end{aligned}
	\right.
\end{equation}
where $\mathscr{L}_{X_{t}^{\delta,\epsilon}}$ denotes the law of $X_{t}^{\delta,\epsilon}$, $\delta$ and $\epsilon$ are the scale parameters satisfying certain assumptions specified in the sequel (see Theorem \ref{sec3-th3.5} below), $\epsilon$ is a small positive parameter describing the
ratio of the time scale between the slow component $X_{t}^{\delta,\epsilon}$ and fast component $Y_{t}^{\delta,\epsilon}$, $\{B_{t}^{H}\}_{t\geq0}$ and $\{W_{t}\}_{t\geq0}$ are mutually independent $n$-dimensional fBm with Hurst parameter $H\in(1/2,1)$ and $m$-dimensional standard Brownian motions on a given complete filtered probability space $(\Omega, \mathscr{F},\{\mathscr{F}_{t}\}_{t\geq0},\mathbb{P})$, respectively. More precisely, we take $\Omega$ to be the Banach space $C_{0}([0,T];\mathbb{R}^{n})$ of continuous functions vanishing at 0 equipped with the supremum norm, $\mathscr{F}$ is the Borel $\sigma$-algebra,  $\{\mathscr{F}_{t}\}_{t\geq0}$ denotes the filtration generated by $B^{H}$ and $W$,  and $\mathbb{P}$ is the unique canonical probability measure on $\Omega$.

Due to the different time scales and the cross interactions between the fast and slow
components, it is very difficult to analyse such kind of stochastic system directly.
More recently, under ${\bf(H_1)}$ and ${\bf(H_2)}$ (see Sec \ref{sec2} below), Shen, Yin and Wu \cite{Shen2}  showed   the slow component $X_{t}^\epsilon$ (taking $\delta=1$ in Eq. \eqref{sec1-eq1.2}) strongly converges to the solution $\bar{X}$  of the associated averaged equations
\begin{equation}\label{sec1-eq1.12}
	\left\{
	\begin{aligned}
		&d\bar{X}_{t}=\bar{b}(t,\bar{X}_{t},\mathscr{L}_{\bar{X}_{t}})dt
		+\sigma(t,\mathscr{L}_{\bar{X}_{t}})dB_{t}^{H},\\
		&\bar{X}_{0}=x,
	\end{aligned}
	\right.
\end{equation}
where
\begin{equation}\label{sec1-eq1.31}
	\bar{b}(t,x,\mu)=\int_{\mathbb{R}^{m}}b(t,x,\mu,z)\nu^{t,x,\mu}(dz),
\end{equation}
and $\nu^{t,x,\mu}$ is the unique invariant measure for the transition semigroup of the solution of the following frozen equations

\begin{equation*}\label{sec1-eq1.3}
	\left\{
	\begin{aligned}
		&dY_{s}=f(t,x,\mu,Y_{s})ds+g(t,x,\mu,Y_{s})d\tilde{W}_{s},\\
		&Y_{0}=y,
	\end{aligned}
	\right.	
\end{equation*}
where $\tilde{W}_{t}$ is a $m$	dimensional Brownian motion on another given complete probability space $(\tilde{\Omega},\tilde{\mathscr{F}},\{\tilde{\mathscr{F}_{t}}\}_{t\geq0},\mathbb{\tilde{P}})$ and $\{\tilde{\mathscr{F}_{t}}\}_{t\geq0}$ is the natural filtration generated by $\tilde{W}_{t}$.

However, the averaged process $\bar{X}$ is valid only in the limiting sense, and it is clear that
the slow process $X_{t}^\epsilon$  will experience fluctuations around its averaged process $\bar{X}$ for
small $\epsilon$. In order to
capture the fluctuations, it is important to study the asymptotic behavior of the deviation
between $X_{t}^\epsilon$  and $\bar{X}$. Large  deviation principles (LDP) are to calculate the probability of a rare event, which investigate the asymptotic property of remote tails of a family of a probability distribution. In the case of stochastic processes, the idea lies in identifying a deterministic path around which the diffusion is concentrated with high probability  which
leads to a interpretation of the stochastic motion as a small perturbation of this deterministic
path. A powerful   approach for studying large deviation problems is the well-known weak convergence method (see, for example,  Budhiraja and  Dupuis \cite{Budhiraja1}, Matoussi, Sabbagh and   Zhang \cite{Matoussi}, Ren and Zhang \cite{Ren}), this approach has been widely applied in various
stochastic dynamical systems driven by Brownian motion, L\'{e}vy process or fBm.
Dupuis and Spiliopoulos \cite{Dupuis} studied the LDP for locally periodic SDEs with small noise and fast ascillating coefficients.
Bezemek and Spitiopoulos \cite{Bezemek} made use of weak convergence methods providing a convenient representation for the large deviations rate function.
Liu et al. \cite{Liu} considered McKean-Vlasov SDEs driven by L\'{e}vy noise and applied the weak convergence method to establish large and moderate deviation principles.
Budhiraja and Song \cite{Budhiraja} studied small noise large deviations asymptotics for SDEs with a multiplicative noise given as a fBm. Fan et al. \cite{Fan} studied small-time asymptotic behaviors for a class of DDSDEs driven by fBm with
Hurst parameter $H\in(1/2, 1)$ and magnitude $\delta^{H}$ and established the LDP for this type equations.
Many interesting studies of  LDP have been developed  further in
Brzeniak, Goldys and  Jegaraj \cite{Brzeniak},
Budhiraja et al. \cite{BudhirajaDupuis}, Dong et al. \cite{Dong}, Hong et al. \cite{HLL},
Matoussi et al. \cite{Matoussi},
Suo and Yuan \cite{Suo},
Wang et al. \cite{WangZhai} 
and the references therein.
In the distribution independent case, there have been many fundamental studies addressing
the LDP for two-time-scale stochastic systems driven by Brownian motion, jump process and fBm.
Hong et al. \cite{HongLiLiu} considered Freidlin-Wentzell type LDP for multi-scale locally monotone stochastic partial differential equations.   Kumar  and  Popovic \cite{Kumar} studied LDP for multi-scale jump-diffusion processes. Sun et al. \cite{Sun} obtained LDP for two-time-scale stochastic Burgers equations. Bourguin et al. \cite{Bourguin2} studied typical dynamics and fluctuations for a slow-fast dynamics system perturbed by a small fractional Brownian noise.
Gailus and Gasteratos \cite{Gailus} considered the LDP for a multiscale system of stochastic differential equations in which the slow component is perturbed by
a small fBm in the homogenized limit.
Inahama et al. \cite{Inahama} established the LDP for slow-fast system with mixed fBm.

It is worth noting that there is few LDP result for multi-scale distribution dependent stochastic system
so far. To the best of our knowledge, Hong et al.  \cite{Hong} is the first result concerning the LDP for multi-scale   DDSDEs based on the techniques of weak convergence approach.
Although there exist some investigations in the literature devoted to studying LDP for multi-scales SDEs or DDSDEs driven by Brownian motion, L\'{e}vy processes and fBm. However, there is not any consideration of LDP for multi-scales DDSDEs driven by fBm. It is interesting to find how the multi-scales influence Eq. (\ref{sec1-eq1.2}) when $\delta$ and $\epsilon$ converge to zero simultaneously. The main purpose of this paper is to consider the small noise asymptotic behavior and establish LDP for Eq. (\ref{sec1-eq1.2}). More precisely,  we will prove that $\{X^{\delta,\epsilon}\}_{\delta>0}$ in Eq. (\ref{sec1-eq1.2}) satisfies the LDP in $C([0,T];\mathbb{R}^{n})$ as $\delta\rightarrow0$. It is worth stressing that compared with the works
in the Brownian motion and L\'{e}vy processes cases, there are substantial new difficulties presented by our setting since fBm  is neither a Markov process nor a semimartingale, so some techniques based on  It\^{o} calculus are not applicable. Our strategy in this paper is based on fractional calculus and weak convergence approach. The time
discretization technique will also be employed frequently to obtain some crucial estimates.

		The rest of this paper is organized as follows. Section \ref{sec2} contains some necessary preliminaries of fBm and some basic properties. In Section \ref{sec3}, we present the main result concerning the LDP. In Section \ref{sec4}, we devote to proving the main result.
		Throughout this paper, the letter $C$ will denote a positive constant, with or without
		subscript, its value may change in different occasions. We will write the dependence of the
		constant on parameters explicitly if it is essential. Let $|\cdot|$ and $\left\langle \cdot, \cdot\right\rangle $ be the Euclidean norm and inner product, respectively, and for a matrix,  $\|\cdot\|$ denotes the operator norm.


		\section{ Preliminaries}\label{sec2}
		
		As an extension of Brownian motion, the fBm  exhibits long-range dependence and self-similarity, having stationary increments. It is the usual candidate to model
		phenomena in which the self-similarity property can be observed from
		the empirical data.
		Recall that the fBm $B^{H}=(B^{H,1},\cdots,B^{H,n})$  with Hurst index $H\in
		(\frac12,1)$  is a centered Gaussian process, whose covariance structure is defined by
		$$\mathbb{E}(B_{t}^{H,i}B_{s}^{H,j})=R_{H}(t,s)\delta_{i,j},\quad s,t\in[0,T],\quad i,j=1,\cdots,n$$
		with $R_{H}(t,s)=\frac{1}{2}(t^{2H}+s^{2H}-|t-s|^{2H})$.
		Thus, Kolmogorov's continuity criterion implies that fBm is H\"{o}lder continuous of order $\delta$ for any $\delta<H.$
		Besides, $R_{H} (t, s)$ has the following integral representation
		$$R_{H}(t,s)=\int_{0}^{t\wedge s}K_{H}(t,r)K_{H}(s,r)dr,$$
		where the deterministic kernel $K_{H}(t,s)$ is   given by
		$$K_{H}(t,s)=C_{H}s^{\frac{1}{2}-H}\int_{s}^{t}(u-s)^{H-\frac{3}{2}}u^{H-\frac{1}{2}}du,\quad t>s,$$
		where $C_{H}=\sqrt{\frac{H(2H-1)}{\mathcal{B}(2-2H,H-1/2)}}$ and $\mathcal{B}$ standing for the Beta function. If $t\leq s$, we set $K_{H}(t,s)=0$.
		FBm was first
		introduced by Kolmogorov and studied by Mandelbrot and Van Ness
		\cite{man}, where a stochastic integral representation in terms of a
		standard Brownian motion was established.
		For
		$H=\frac1 2$, $B^H$ coincides with the standard Brownian motion $B$, but
		$B^H$ is neither a semimartingale nor a Markov process unless
		$H=\frac 12$. As a consequence, some classical techniques of stochastic
		analysis are not applicable. Interesting surveys of fBm and related stochastic calculus
		could be found in  Biagini et al. \cite{ Biagini}
		and references therein.

		In the follows, we recall the basic definitions and properties of the fractional calculus. For a detailed presentation of these notions we refer Samko et al. \cite{{Samko}}. Let $a,b\in\mathbb{R}$, $a<b$.  Let $f\in L^{1}(a,b)$ and $\alpha>0$. The left and right-sided fractional integrals of $f$ of order $\alpha$ are defined for almost all $x\in(a,b)$ by
		$$I_{a+}^{\alpha}f(x)=\frac{1}{\Gamma(\alpha)}\int_{a}^{x}(x-y)^{\alpha-1}f(y)dy,$$
		and
		$$I_{b-}^{\alpha}f(x)=\frac{1}{\Gamma(\alpha)}\int_{x}^{b}(y-x)^{\alpha-1}f(y)dy,$$
		respectively. Let $f\in I_{a+}^{\alpha}(L^{p})$ (resp. $f\in I_{b-}^{\alpha}(L^{p})$) and $0<\alpha<1$, then the left and right-sided fractional derivatives are defined by
		$$D_{a+}^{\alpha}f(x)=\frac{1}{\Gamma(1-\alpha)}\Big(\frac{f(x)}{(x-a)^{\alpha}}+\alpha\int_{a}^{x}\frac{f(x)-f(y)}{(x-y)^{\alpha+1}}dy\Big),$$
		and
		$$D_{b-}^{\alpha}f(x)=\frac{1}{\Gamma(1-\alpha)}\Big(\frac{f(x)}{(b-x)^{\alpha}}+\alpha\int_{x}^{b}\frac{f(x)-f(y)}{(y-x)^{\alpha+1}}dy\Big),$$
		for almost all $x\in(a,b)$ (the convergence of the integrals at the singularity $y=x$ holds pointwisely for almost all $x\in(a,b)$ if $p=1$ and moreover in $L^{P}$ sense if $1<p<\infty$).
		
		Recall the following properties of these operators:
		
		$\bullet$ If $\alpha<\frac{1}{p}$ and $q=\frac{p}{1-\alpha p}$, then
		$$
		I_{a+}^\alpha(L^p)=I_{b-}^\alpha(L^p)\subset L^q(a,b).
		$$
		
		$\bullet$ If $\alpha>\frac{1}{p}$, then
		$$
		I_{a+}^\alpha(L^p)\cup I_{b-}^\alpha(L^p)\subset C^{\alpha-\frac{1}{p}}(a,b),
		$$
		where $C^{\alpha-\frac{1}{p}}(a,b)$ denotes the space of $(\alpha-\frac{1}{p})$-H\"{o}lder continuous functions of order $\alpha-\frac{1}{p}$ in the interval $[a,b]$.
		
		The following inversion formulas hold:
		$$
		I_{a+}^\alpha(D^\alpha_{a+}f)=f
		$$
		for all $f\in I_{a+}^\alpha(L^p)$, and
		$$
		D_{a+}^\alpha (I_{a+}^\alpha f)=f
		$$
		for all $f\in L^1(a,b)$. Similar inversion formulas hold for the operators $I_{b-}^\alpha$ and $D_{b-}^\alpha$.
		
		The following integration by parts formula holds:
		$$
		\int_a^b(D^\alpha_{a+}f)(s)g(s)ds=\int_a^bf(s)(D^\alpha_{b-}g)(s)ds,
		$$
		for any $f\in I_{a+}^\alpha(L^p)$, $g\in I_{b-}^\alpha(L^q)$, $\frac{1}{p}+\frac{1}{q}=1$.
		To prove our main results, we also present the following Hardy-Littlewood inequality.
		\begin{lemma}\label{sec3-lem3.1}(\cite{Stein})
			Let $1<\tilde{p}<\tilde{q}<\infty$ and $\frac{1}{\tilde{q}}=\frac{1}{\tilde{p}}-\alpha$. If $f:\mathbb{R}^+\rightarrow\mathbb{R}$ belongs to $L^{\tilde{p}}(0,\infty)$, then $I_{0+}^\alpha f(x)$ converges absolutely for almost every x, and moreover
			$$
			\|I_{0+}^\alpha f(x)\|_{L^{\tilde{q}}(0,\infty)}\leq C_{\tilde{p},\tilde{q}}\|f\|_{L^{\tilde{p}}(0,\infty)}
			$$
			holds for some positive constant $C_{\tilde{p},\tilde{q}}$.
		\end{lemma}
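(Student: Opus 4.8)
The plan is to recognize $I_{0+}^\alpha$ as a one-sided Riesz potential on the half-line: since $I_{0+}^\alpha f(x)=\frac{1}{\Gamma(\alpha)}\int_0^x(x-y)^{\alpha-1}f(y)\,dy$, the kernel $(x-y)_+^{\alpha-1}$ is homogeneous of degree $\alpha-1$, and the dilation $x\mapsto\lambda x$ is exactly compatible with the exponent identity $\frac{1}{\tilde q}=\frac{1}{\tilde p}-\alpha$. This homogeneity is what forces the particular pairing of $\tilde p$ and $\tilde q$, and it suggests that the global $L^{\tilde p}\to L^{\tilde q}$ bound should follow from a scale-invariant pointwise estimate combined with the (scale-invariant) Hardy--Littlewood maximal inequality. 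I would deliberately avoid trying to bound the kernel in any single $L^r$ space, since it is simultaneously too singular at $0$ and too heavy at $\infty$; Young's inequality is therefore unavailable and a finer splitting is needed.

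The core step is a pointwise inequality of Hedberg type. I would fix $x>0$ and a cutoff $R>0$ and split the defining integral at $x-y=R$. On the near region $x-R<y<x$ I decompose dyadically into annuli $x-2^{-k}R<y<x-2^{-k-1}R$; there $(x-y)^{\alpha-1}\le(2^{-k-1}R)^{\alpha-1}$ (using $\alpha-1<0$) and the integral of $|f|$ is controlled by $Mf(x)$ times the length $2^{-k-1}R$, where $Mf$ denotes the Hardy--Littlewood maximal function, so each term is bounded by $C(2^{-k}R)^{\alpha}Mf(x)$ and the series converges because $\alpha>0$, giving a near part $\le CR^\alpha Mf(x)$. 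On the far region $0<y<x-R$ I apply H\"{o}lder's inequality with exponents $\tilde p,\tilde p'$; the resulting kernel integral $\int_R^\infty u^{(\alpha-1)\tilde p'}\,du$ converges precisely because $\frac{1}{\tilde p}>\alpha$ (equivalently $(1-\alpha)\tilde p'>1$), yielding a far part $\le C\|f\|_{\tilde p}R^{\alpha-1/\tilde p}$. Balancing the two contributions over $R$ (matching $R^\alpha Mf(x)$ against $R^{\alpha-1/\tilde p}\|f\|_{\tilde p}$) produces the pointwise estimate $|I_{0+}^\alpha f(x)|\le C\,(Mf(x))^{1-\alpha\tilde p}\|f\|_{\tilde p}^{\alpha\tilde p}$.

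To conclude I would raise this to the power $\tilde q$ and integrate in $x$. The exponent bookkeeping closes exactly because $(1-\alpha\tilde p)\tilde q=\tilde p$, a direct consequence of $\frac{1}{\tilde q}=\frac{1}{\tilde p}-\alpha$, so that $\int_0^\infty(Mf(x))^{(1-\alpha\tilde p)\tilde q}\,dx=\|Mf\|_{\tilde p}^{\tilde p}$ and the collected powers of $\|f\|_{\tilde p}$ sum to $\tilde q$. The last ingredient is the Hardy--Littlewood maximal inequality $\|Mf\|_{\tilde p}\le C\|f\|_{\tilde p}$, valid for $\tilde p>1$, which I regard as the main obstacle: it is exactly this maximal bound that requires the hypothesis $\tilde p>1$ (the estimate genuinely fails at $\tilde p=1$, where only a weak-type bound survives), and it is the one input that itself rests on the Vitali covering lemma and Marcinkiewicz interpolation rather than an elementary computation. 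Combining everything gives $\|I_{0+}^\alpha f\|_{\tilde q}\le C\|f\|_{\tilde p}$, and the almost-everywhere absolute convergence of $I_{0+}^\alpha f(x)$ follows since the same argument applied to $|f|$ shows $I_{0+}^\alpha|f|\in L^{\tilde q}$, hence is finite for almost every $x$. An alternative route would skip the pointwise inequality, establish the weak-type $(\tilde p,\tilde q)$ bound directly by estimating the distribution function after the same near/far split, and invoke Marcinkiewicz interpolation; but the Hedberg argument above is more self-contained.
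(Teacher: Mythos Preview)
Your argument is correct: the Hedberg pointwise inequality followed by the Hardy--Littlewood maximal theorem is one of the standard proofs of this fractional integration inequality, and all of your exponent bookkeeping checks out (in particular $(1-\alpha\tilde p)\tilde q=\tilde p$ and the convergence of $\int_R^\infty u^{(\alpha-1)\tilde p'}du$ when $\tilde p>1/(1-\alpha)$ is automatic from $1/\tilde q>0$).

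There is nothing in the paper to compare against: the lemma is stated with a citation to Stein's book and no proof is given. For context, Stein's original argument in \emph{Singular Integrals and Differentiability Properties of Functions} (Chapter~V, \S1) predates Hedberg's 1972 pointwise inequality and instead establishes weak-type $(\tilde p,\tilde q)$ bounds directly via a splitting at a level depending on the distribution-function threshold, then invokes Marcinkiewicz interpolation---exactly the alternative route you mention in your last sentence. Your Hedberg approach has the advantage of yielding a strong-type bound in one stroke without interpolation, at the cost of importing the maximal inequality as a black box; Stein's route is more self-contained but requires the interpolation machinery. Either is perfectly acceptable here.
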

		
		Consider the operator $K_{H}$ induced by the kernel $K_H(t,s)$ as follows:
		$$K_{H}:L^{2}([0,T];\mathbb{R}^{n})\rightarrow I_{0+}^{H+1/2}(L^{2}([0,T];\mathbb{R}^{n}))$$
		by
		$$(K_{H}f)(t):=\int_{0}^{t}K_{H}(t,s)f(s)ds.$$
		On the other hand, for $H> 1/2$, the operator $K_{H}$ can be represented as
		$$K_{H}f:=C_{H} \Gamma(H-1/2)I_{0+}^{1}t^{H-1/2}I_{0+}^{H-1/2}t^{1/2-H}f.$$
		Besides, we denote by $\dot{K}_{H}$ the ``derivative" of the operator $K_{H}$, i.e.,
		$$\dot{K}_{H}f:=C_{H}\Gamma(H-1/2)t^{H-1/2}I_{0+}^{H-1/2}t^{1/2-H}f.$$
		Then Cameron-Martin space $\mathcal{H}_{H}$ associated with the process $B^{H}_{\cdot}$ is defined by
		$$\mathcal{H}_{H}=\Big\{K_{H}\hat{f}:\hat{f}\in L^{2}([0,T];\mathbb{R}^{n})\Big\}$$
		equipped with the inner product $\left\langle f,g \right\rangle_{\mathcal{H}_{H}}=\left\langle \hat{f},\hat{g} \right\rangle _{L^{2}([0,T];\mathbb{R}^{n})}.$
		It is well known that
		$$\left\langle f,g \right\rangle _{\mathcal{H}_{H}}=H(2H-1)\int_{0}^{T}\int_{0}^{T}|t-s|^{2H-2}\left\langle f(s), g(s) \right\rangle_{\mathbb{R}^{n}}dsdt,$$
		this yields for any $f\in L^2([0,T];\mathbb{R}^{n})$,
		$$\|f\|^{2}_{\mathcal{H}_{H}}\leq 2HT^{2H-1}\|f\|^{2}_{ L^2([0,T];\mathbb{R}^{n})}.$$
		Note that in this paper the noise process contains $B^{H}_{\cdot}$ and $W_{\cdot}$  of the form
		$\Big\{(B^{H}_{t},W_{t}): t\in[0,T]\Big\}.$
		Thus we need to define the Cameron-Martin space (see Bourguin et al. \cite{Bourguin}) associated with $(B^{H}_{\cdot},W_{\cdot})$ given by
		$$\mathcal{H}=\Big\{ (K_{H}\hat{f}_{1},K_{1/2}\hat{f}_{2}): (\hat{f}_{1},\hat{f}_{2})\in L^{2}([0,T];\mathbb{R}^{n+m}) \Big\}.$$
		
		As a Cameron-Martin space, $\mathcal{H}$ is a Hilbert space equipped with the inner product given by
		$$\langle(f_{1},f_{2}),(g_{1},g_{2})\rangle_{\mathcal{H}}=\langle f_{1},g_{1}\rangle_{\mathcal{H}_{H}}+\langle f_{2},g_{2}\rangle_{\mathcal{H}_{1/2}}.$$
		The following result consider the differentiability of elements in $\mathcal{H}_{H}$
		which  will be used throughout the paper.
		\begin{lemma}\label{sec2-lemma2.0}(\cite{Bourguin})
			If $H>1/2$ and $u\in\mathcal{H}_{H}$ such that $u=K_{H}\hat{u}$, $\hat{u}\in L^{2}([0,T];\mathbb{R}^{n})$, then we have
			\begin{equation*}
				\begin{aligned}
					\dot{u}(t)=\dot{K}_{H}\hat{u}(t)&=C_{H}\Gamma(H-1/2)t^{H-1/2}I_{0+}^{H-1/2}t^{1/2-H}\hat{u}(t)\\
					&=C_{H}t^{H-1/2}\int_{0}^{t}(t-s)^{H-3/2}s^{1/2-H}\hat{u}_{s}ds.
				\end{aligned}
			\end{equation*}
		\end{lemma}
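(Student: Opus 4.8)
The plan is to exploit the fact that the two operators $K_H$ and $\dot K_H$, as defined just above the statement, differ by exactly one application of the integration operator $I_{0+}^{1}$. Comparing
$$K_H f = C_H\Gamma(H-1/2)\,I_{0+}^{1}\,t^{H-1/2}I_{0+}^{H-1/2}t^{1/2-H}f \quad\text{and}\quad \dot K_H f = C_H\Gamma(H-1/2)\,t^{H-1/2}I_{0+}^{H-1/2}t^{1/2-H}f,$$
one reads off immediately that $K_H f = I_{0+}^{1}(\dot K_H f)$. Since $I_{0+}^{1}g(t)=\frac{1}{\Gamma(1)}\int_0^t(t-s)^{0}g(s)\,ds=\int_0^t g(s)\,ds$ is nothing but ordinary Lebesgue indefinite integration, this identity says precisely that $u=K_H\hat u$ is the indefinite integral of $\dot K_H\hat u$, i.e. $u(t)=\int_0^t\dot K_H\hat u(r)\,dr$. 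The asserted identity $\dot u=\dot K_H\hat u$ is then just the fundamental theorem of calculus applied to this integral, and the remaining explicit formula is obtained by unwinding the definition of the fractional integral.

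To make this rigorous I would first verify that $\dot K_H\hat u\in L^{1}([0,T];\mathbb{R}^n)$, which is exactly the integrability hypothesis needed to differentiate an indefinite Lebesgue integral and recover its integrand almost everywhere. This is the one genuinely technical point. Writing $g(s)=s^{1/2-H}\hat u(s)$, the singular weight $s^{1/2-H}$ has exponent in $(-1/2,0)$ for $H\in(1/2,1)$ and hence lies in $L^{r}(0,T)$ for every $r<\frac{1}{H-1/2}$; since $\frac{1}{H-1/2}>2$ precisely when $H<1$, one may choose $r>2$, and then H\"older's inequality gives $g\in L^{\tilde p}(0,T)$ for some $\tilde p>1$. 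The Hardy--Littlewood inequality (Lemma \ref{sec3-lem3.1}) guarantees that $I_{0+}^{H-1/2}g$ belongs to $L^{\tilde q}$, and multiplying by the factor $t^{H-1/2}$, which is bounded on $[0,T]$, keeps the result in $L^{\tilde q}([0,T])\subset L^{1}([0,T])$. As a sanity check one may also note, via the semigroup property $I_{0+}^{H+1/2}=I_{0+}^{1}I_{0+}^{H-1/2}$ and the first embedding recalled above, that $u=K_H\hat u\in I_{0+}^{H+1/2}(L^2)$ is in particular absolutely continuous, so the differentiation is legitimate.

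With integrability in hand, the fundamental theorem of calculus yields $\dot u(t)=\dot K_H\hat u(t)$ for almost every $t\in[0,T]$. It then only remains to make the right-hand side explicit by inserting the definition of the left-sided fractional integral,
$$I_{0+}^{H-1/2}g(t)=\frac{1}{\Gamma(H-1/2)}\int_0^t(t-s)^{H-3/2}g(s)\,ds,$$
with $g(s)=s^{1/2-H}\hat u_s$. Substituting into $\dot K_H\hat u(t)=C_H\Gamma(H-1/2)\,t^{H-1/2}\,I_{0+}^{H-1/2}g(t)$, the two factors $\Gamma(H-1/2)$ cancel and one is left with
$$\dot u(t)=C_H\,t^{H-1/2}\int_0^t(t-s)^{H-3/2}s^{1/2-H}\hat u_s\,ds,$$
which is the claimed formula. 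The main obstacle, such as it is, is confined to the integrability verification of the second paragraph; once that is settled the statement is a direct unwinding of the definitions of $K_H$ and $\dot K_H$.
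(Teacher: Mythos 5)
Your proof is correct, and it is worth noting that the paper itself offers no proof of this lemma at all: it is imported verbatim from Bourguin et al.\ \cite{Bourguin}. Your argument --- reading off $K_H f = I_{0+}^{1}(\dot K_H f)$ from the two definitions, verifying $\dot K_H\hat u\in L^{1}([0,T];\mathbb{R}^n)$ via the H\"older/Hardy--Littlewood (Lemma \ref{sec3-lem3.1}) estimate on $s^{1/2-H}\hat u$, and then invoking the Lebesgue differentiation theorem before unwinding $I_{0+}^{H-1/2}$ to cancel the $\Gamma(H-1/2)$ factors --- is exactly the natural route the definitions invite, and it correctly isolates and settles the one genuine technical point (the integrability of $\dot K_H\hat u$).
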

		
		$\bullet$ It is obvious that if $H=1/2$ and $v\in\mathcal{H}_{1/2}$, then $\dot{v}_{t}=\hat{v}_{t}$.
		
		$\bullet$ The map $\dot{K}_{H}$  is a bound operator in $L^{2}([0,T];\mathbb{R}^{n})$, which implies that
		$$\|\dot{K}_{H}f\|_{L^{2}([0,T];\mathbb{R}^{n})}\leq C_{H}\|f\|_{L^{2}([0,T];\mathbb{R}^{n})}.$$

		Furthermore, denote $C_{b}(\mathscr{E})$ by the set of all bounded continuous functions $f:\mathscr{E}\rightarrow\mathbb{R}$ with the norm $\|f\|_{\infty}:=\sup_{x\in\mathscr{E}}|f(x)|$, where $\mathscr{E}$ is a Polish space with the Borel $\sigma$-field $\mathcal{B}(\mathscr{E})$.
		Let
		$$\mathcal{A}:=\Big\{\phi:\phi \textit{ is } \mathbb{R}^{n+m} \textit{-valued } \mathscr{F}_{t} \textit{-predictable process and } \|\phi\|^{2}_{\mathcal{H}}<\infty,  \mathbb{P} \textit{-a.s.}\Big\},$$
		and for each $M>0$, let
		$$S_{M}:=\Big\{h\in \mathcal{H}:\frac{1}{2}\|h\|^{2}_{\mathcal{H}}\leq M\Big\},$$
		where $h=(K_{H}\hat{u},K_{1/2}\hat{v})=(u,v)\in\mathcal{H}.$
		That is to say,
		$$\|h\|_{\mathcal{H}}^{2}=\int_{0}^{t}|\hat{u}_{s}|^{2}+|\hat{v}_{s}|^{2}ds=\|u_{\cdot}\|^{2}_{\mathcal{H}_{H}}+\|v_{\cdot}\|^{2}_{\mathcal{H}_{1/2}}<\infty.$$
		It is obvious that $S_{M}$ endowed with the weak topology is a Polish space. Besides, define
		$$\mathcal{A}_{M}:=\Big\{\phi\in\mathcal{A}:\phi(\cdot)\in S_{M}, \mathbb{P}\textit{ -a.s.}\Big\}.$$
		
		We assume that  $b,\sigma,f,g$
		\begin{align*}
			&b:[0,T]\times\mathbb{R}^{n}\times\mathscr{P}_{\theta}(\mathbb{R}^{n})\times\mathbb{R}^{m}\rightarrow\mathbb{R}^{n},~
			~~\sigma:[0,T]\times\mathscr{P}_{\theta}(\mathbb{R}^{n})\rightarrow\mathbb{R}^{n}\otimes\mathbb{R}^{n},\\
			&f:[0,T]\times\mathbb{R}^{n}\times\mathscr{P}_{\theta}(\mathbb{R}^{n})\times\mathbb{R}^{m}\rightarrow\mathbb{R}^{m},~
			~g:[0,T]\times\mathbb{R}^{n}\times\mathscr{P}_{\theta}(\mathbb{R}^n)\times\mathbb{R}^{m}\rightarrow\mathbb{R}^{m}\otimes\mathbb{R}^{m},
		\end{align*}
		with
		$$\mathscr{P}_{\theta}(\mathbb{R}^{n}):=\Big\{\mu\in\mathbb{P}(\mathbb{R}^{n}):\mu(|\cdot|^{\theta}):=
		\int_{\mathbb{R}^{n}}|x|^{\theta}\mu(dx)<\infty\Big\},\quad \theta\in[2,\infty),$$
		where $\mathbb{P}$ is the set of probability measure on $(\mathbb{R}^{n},\mathcal{B}(\mathbb{R}^{n}))$. The space $\mathscr{P}_{\theta}(\mathbb{R}^{n})$ is a Polish space under the $L^{\theta}$-Wasserstein distance ($\theta\geq2$)
		$$\mathbb{W}_{\theta}(\mu_{1},\mu_{2}):=\inf_{\pi\in\mathscr{C}(\mu_{1},\mu_{2})}\Big(\int_{\mathbb{R}^{n}
			\times\mathbb{R}^{n}}|x-y|^{\theta}\pi(dx,dy)\Big)^{\frac{1}{\theta}},\quad \mu_{1},\mu_{2}\in\mathscr{P}_{\theta}(\mathbb{R}^{n}),$$
		where $\mathscr{C}(\mu_{1},\mu_{2})$ is the set of probability measures on $\mathbb{R}^{n}\times\mathbb{R}^{n}$ with marginals $\mu_{1}$ and $\mu_{2}$, respectively.
		The coefficients satisfy the following conditions.
		
		\textbf{$\mathbf{(H1)}$}
		There exists a non-decreasing function $K(t),$ $K(0)=1$ such that for any $t,t_{i}\in[0,T]$, $p>0$,
		$x_{i}\in\mathbb{R}^{n}$, $y_{i}\in \mathbb{R}^{m}$, $\mu_{i}\in\mathscr{P}_{\theta}(\mathbb{R}^{n})$,
		$\nu_{i}\in\mathscr{P}_{\theta}(\mathbb{R}^{m})$, $i=1,2,$
		\begin{equation*}\label{sec2-eq2.1}
			|b(t_{1},x_{1},\mu_{1},y_{1})-b(t_{2},x_{2},\mu_{2},y_{2})|^{p}\leq K(|t_{1}-t_{2}|^{p})[\kappa(|x_{1}-x_{2}|^{p}
			+|y_{1}-y_{2}|^{p}+\mathbb{W}_{\theta}(\mu_{1},\mu_{2})^{p})],
		\end{equation*}
		\begin{equation*}\label{sec2-eq2.2}
			\|\sigma(t,\mu_{1})-\sigma(t,\mu_{2})\|^{p}\leq K(t^{p})\kappa(\mathbb{W}_{\theta}(\mu_{1},\mu_{2})^{p}),
		\end{equation*}
		\begin{equation*}\label{sec2-eq2.3}
			|f(t_{1},x_{1},\mu_{1},y_{1})-f(t_{2},x_{2},\mu_{2},y_{2})|^{p}\leq K(|t_{1}-t_{2}|^{p})[\kappa(|x_{1}-x_{2}|^{p}
			+|y_{1}-y_{2}|^{p}+\mathbb{W}_{\theta}(\mu_{1},\mu_{2})^{p})],
		\end{equation*}
		\begin{equation*}\label{sec2-eq2.4}
			\|g(t_{1},x_{1},\mu_{1},y_{1})-g(t_{2},x_{2},\mu_{2},y_{2})\|^{p}\leq K(|t_{1}-t_{2}|^{p})[\kappa(|x_{1}-x_{2}|^{p}
			+|y_{1}-y_{2}|^{p}+\mathbb{W}_{\theta}(\mu_{1},\mu_{2})^{p})],
		\end{equation*}
		and
		\begin{equation*}\label{sec2-eq2.5}
			|b(t,0,\delta_{0},0)|^{p}+\|\sigma(t,\delta_{0})\|^{p}+|f(t,0,\delta_{0},0)|^{p}+\|g(t,0,\delta_{0},0)\|^{p}\leq K(t^{p}),
		\end{equation*}
		where $\kappa:\mathbb{R}^{+}\rightarrow\mathbb{R}^{+}$ is continuous and non-decreasing concave function with $\kappa(0)=0$, $\kappa(u)>0$, for every $u>0$ such that $\int_{0^{+}}\frac{1}{\kappa(u)}du=+\infty$.
		
		\textbf{$\mathbf{(H2)}$}
		There exist constants $\beta_{i}>0, i=1,2$, such that the following hold
		\begin{equation*}\label{sec1-H2-1}
			\begin{aligned}
				&2\langle y_{1}-y_{2},f(t_{1},x_{1},\mu_{1},y_{1})-f(t_{2},x_{2},\mu_{2},y_{2})\rangle
				+\|g(t_{1},x_{1},\mu_{1},y_{1})-g(t_{2},x_{2},\mu_{2},y_{2})\|^{2}\\
				&\leq-\beta_{1}|y_{1}-y_{2}|^{2}+K(|t_{1}-t_{2}|^{2})\kappa(|x_{1}-x_{2}|^{2}+\mathbb{W}_{2}(\mu_{1},\mu_{2})^{2}),
			\end{aligned}
		\end{equation*}
		and
		\begin{equation*}\label{sec1-H2-1}
			2\langle y,f(t,x,\mu,y)\rangle+\|g(t,x,\mu,y)\|^{2}\leq-\beta_{2}|y|^{2}+C_{T}(1+|x|^{2}+\mu(|\cdot|^{2})).
		\end{equation*}
		
		\begin{example}
			We can give a few concrete examples of the function   $\kappa(\cdot)$. Let $K>0$, and let $\gamma\in(0,1)$ be sufficiently small. Define\\
			$\kappa_1(u)=Ku,u\geq 0.$\\
			$\kappa_2(u)=\left\{
			\begin{array}{ll}
				u\log(u^{-1}), & {0\leq u\leq\gamma;} \\
				\gamma\log(\gamma^{-1})+\kappa{'}_2(\gamma-)(u-\gamma), & {u>\gamma.}
			\end{array}
			\right.$\\
			$\kappa_3(u)=\left\{
			\begin{array}{ll}
				u\log(u^{-1})\log\log(u^{-1}), & {0\leq u\leq\gamma;} \\
				\gamma\log(\gamma^{-1})\log\log(\gamma^{-1})+\kappa{'}_3(\gamma-)(u-\gamma), & {u>\gamma,}
			\end{array}
			\right.$\\
			where $\kappa{'}$ denotes the derivative of the function $\kappa$. They are all concave nondecreasing
			functions satisfying $\int_{0^+} {\frac{du}{\kappa_i(u)}}=\infty$. Furthermore, we observed that the
			Lipschitz condition is a special case of our proposed condition.
		\end{example}

		In order to prove the main results, we introduce the following useful le{mma, which presents a maximal inequlity for $\int_{0}^{t}\sigma(s,\mu_{s})dB_{s}^{H}$.
			\begin{lemma}(\cite{Fan})\label{sec2-lemma2.1}
				Suppose that $\sigma$ satisfies $\mathbf{(H1)}$ and $\mu\in C([0,T];\mathscr{P}_{p}(\mathbb{R}^{d}))$ with $p\geq \theta$ and $p>1/H$. Then there is a constant $C_{T, p, H}>0$ such that
				$$\mathbb{E}\Big(\sup_{t\in[0,T]}|\int_{0}^{t}\sigma(s,\mu_{s})dB_{s}^{H}|^{p}\Big)\leq
				C_{T, p, H}\int_{0}^{T}\|\sigma(s,\mu_{s})\|^{p}ds.$$
			\end{lemma}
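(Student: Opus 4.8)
The plan is to exploit the fact that, for a fixed \emph{deterministic} curve $\mu\in C([0,T];\mathscr{P}_p(\mathbb{R}^d))$, the integrand $\sigma_s:=\sigma(s,\mu_s)$ is a deterministic continuous (hence $L^p$) function of $s$, so that $M_t:=\int_0^t\sigma(s,\mu_s)\,dB_s^H$ is a \emph{centred Gaussian process} with $M_0=0$. Consequently the whole argument reduces to a modulus-of-continuity estimate for a Gaussian process: I would first bound the $L^2$-moments of the increments of $M$, upgrade them to $L^p$-moments by the equivalence of Gaussian moments, pass to the supremum, and finally rewrite the resulting norm of $\sigma$ as the desired $\int_0^T\|\sigma(s,\mu_s)\|^p\,ds$.

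First I would estimate $\mathbb{E}|M_t-M_s|^2$. Since $M_t-M_s$ is the Wiener integral of the deterministic integrand $\sigma\mathbf{1}_{(s,t]}$, the explicit Cameron--Martin inner product gives
\[
\mathbb{E}|M_t-M_s|^2=H(2H-1)\int_s^t\!\!\int_s^t|u-v|^{2H-2}\langle\sigma_u,\sigma_v\rangle\,du\,dv\le H(2H-1)\int_s^t\!\!\int_s^t|u-v|^{2H-2}\|\sigma_u\|\,\|\sigma_v\|\,du\,dv.
\]
The double integral is a Riesz potential of order $2H-1\in(0,1)$ tested against $\|\sigma\|$, so Lemma \ref{sec3-lem3.1} applies with the exponents $\tilde p=1/H$ and $\tilde q=1/(1-H)$ (admissible since $H\in(1/2,1)$ forces $1<1/H<1/(1-H)<\infty$ and $1/\tilde q=1/\tilde p-(2H-1)$); combined with Hölder's inequality, since $1/H$ and $1/(1-H)$ are conjugate, this yields
\[
\mathbb{E}|M_t-M_s|^2\le C_H\Big(\int_s^t\|\sigma_r\|^{1/H}\,dr\Big)^{2H}.
\]
Writing $\Phi(t):=\int_0^t\|\sigma_r\|^{1/H}\,dr$ and invoking the equivalence of Gaussian moments, $\mathbb{E}|M_t-M_s|^p\le C_{p}\big(\mathbb{E}|M_t-M_s|^2\big)^{p/2}\le C_{p,H}\,(\Phi(t)-\Phi(s))^{Hp}$.

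To pass to the supremum I would run this increment bound through a chaining/entropy argument (equivalently, the Garsia--Rodemich--Rumsey inequality) with respect to the canonical metric $d(s,t)=(\mathbb{E}|M_t-M_s|^2)^{1/2}\le C(\Phi(t)-\Phi(s))^{H}$; since $M_0=0$ this controls $\sup_t|M_t|$ by the increments, and Gaussian concentration (Fernique) then gives $\mathbb{E}\sup_{t\in[0,T]}|M_t|^p\le C_{p,H}\,\Phi(T)^{Hp}$. Finally I would convert the norm: because $p\ge 1/H$, Hölder's inequality on the finite interval $[0,T]$ yields the embedding $L^p([0,T])\hookrightarrow L^{1/H}([0,T])$, whence
\[
\Phi(T)^{Hp}=\Big(\int_0^T\|\sigma_r\|^{1/H}\,dr\Big)^{Hp}\le T^{Hp-1}\int_0^T\|\sigma(r,\mu_r)\|^p\,dr,
\]
which is exactly the claimed bound with $C_{T,p,H}=C_{p,H}\,T^{Hp-1}$.

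The step I expect to be the main obstacle is the passage to the supremum, because the integrand is only assumed to be $L^p$ (not bounded), so the modulus estimate comes with the \emph{non-homogeneous} control $\Phi$ rather than $|t-s|$, and one must chain against $d(s,t)\le C(\Phi(t)-\Phi(s))^H$ itself. Here it is essential to use the full Gaussian structure (metric-entropy chaining, or a Garsia--Rodemich--Rumsey estimate with an exponential-type Young function) rather than a polynomial moment criterion such as Kolmogorov's, which would demand $Hp-1>1$ and thus an artificially stronger range. A secondary technical point is the possible degeneracy of $\Phi$ on the set where $\sigma$ vanishes, which is handled routinely by replacing $\Phi$ with $\Phi+\eta\,\mathrm{id}$ and letting $\eta\downarrow 0$, or by observing that $M$ is a.s.\ constant on the flat parts of $\Phi$.
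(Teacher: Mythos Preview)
Your proposal is a sound, self-contained argument; note however that the paper does \emph{not} supply its own proof of this lemma. The statement is quoted verbatim from \cite{Fan} (as indicated by the citation attached to the lemma heading), and the paper uses it as an input without reproving it. So there is no ``paper's proof'' to compare against here.

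That said, the route you outline is essentially the standard one for maximal inequalities of Wiener integrals with respect to fBm, and it is consistent with how the cited reference obtains the bound. Two remarks. First, your application of the Hardy--Littlewood inequality (the paper's Lemma~\ref{sec3-lem3.1}) with exponents $\tilde p=1/H$, $\tilde q=1/(1-H)$ and order $\alpha=2H-1$ is exactly right and gives the increment bound $\mathbb{E}|M_t-M_s|^2\le C_H(\Phi(t)-\Phi(s))^{2H}$; this is the key computation. Second, the supremum step can be made slightly more transparent by the deterministic time change $u\mapsto\tau(u):=\Phi^{-1}(u)$, which turns $N_u:=M_{\tau(u)}$ into a centred Gaussian process on $[0,\Phi(T)]$ with $\mathbb{E}|N_u-N_v|^2\le C_H|u-v|^{2H}$; then a standard Kolmogorov/Dudley bound (here $p>1/H$ guarantees $Hp>1$) gives $\mathbb{E}\sup_u|N_u|^p\le C_{p,H}\Phi(T)^{Hp}$, and your final H\"older step converts this to $T^{Hp-1}\int_0^T\|\sigma(s,\mu_s)\|^p\,ds$. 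This avoids the need to invoke Fernique or a full metric-entropy argument, and it also clarifies the role of the hypothesis $p>1/H$.
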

			
			

			
			
			\section{Main result}\label{sec3}
			
			In order to get the LDP for the Eq. (\ref{sec1-eq1.2}), we first  recall some definitions of the theory of LDP and Laplace principle and their relations. Let $\{X^{\delta}\}_{\delta>0}$ denote a family of random variables defined on a complete probability space $(\Omega,\mathscr{F},\{\mathscr{F}_{t}\}_{t\geq0},\mathbb{P})$ taking values in a Polish space $\mathscr{E}$.
			
			\begin{definition}\label{def3.1}(Rate function) 
				A function $I:\mathscr{E}\rightarrow[0,+\infty)$ is called a rate function if $I$ is lower semicontinous. Moreover, a rate function $I$ is called a good rate function if for each constant $K<\infty,$ the level set $\{x\in\mathscr{E}:I(x)\leq K\}$ is a compact subset of $\mathscr{E}.$
			\end{definition}
			
			\begin{definition}\label{def3.2}(Large deviation principle) 
				The random variable family $\{X^{\delta}\}_{\delta>0}$ is said to satisfy the LDP on $\mathscr{E}$ with rate function $I$ if the following two conditions hold:
				\begin{itemize}
					\item [(i)] (LDP lower bound) For any open set $G\subset\mathscr{E}$,
					$$\liminf_{\delta\rightarrow0}\delta\log\mathbb{P}(X^{\delta}\in G)\geq-\inf_{x\in G}I(x).$$
					\item [(ii)](LDP upper bound) For any closed set $F\subset\mathscr{E}$,
					$$\limsup_{\delta\rightarrow0}\delta\log\mathbb{P}(X^{\delta}\in F)\leq-\inf_{x\in F}I(x).$$
				\end{itemize}
			\end{definition}
			
			Now, we recall the  Laplace principle, and then introduce the powerful weak convergence approach.

			\begin{definition}\label{def3.3}(Laplace principle) 
				The sequence $\{X^{\delta}\}_{\delta>0}$ is said to be satisfied the Laplace principle upper bound (respectively, lower bound) on $\mathscr{E}$ with a rate function $I$ if for each bounded continuous real-valued function $\phi$ defined on $C_{b}(\mathscr{E})$,
				$$\limsup_{\delta\rightarrow0}-\delta\log\mathbb{E}\Big\{\exp[-\frac{1}{\delta}\phi(X^{\delta})]\Big\}\leq \inf_{x\in\mathscr{E}}\Big(\phi(x)+I(x)\Big)$$ 	
				$\Big($respectively,
				$$\liminf_{\delta\rightarrow0}-\delta\log\mathbb{E}\Big\{\exp[-\frac{1}{\delta}\phi(X^{\delta})]\Big\}\geq \inf_{x\in\mathscr{E}}\Big(\phi(x)+I(x)\Big)\Big).$$ 	
			\end{definition}
			
			It is known that if $\mathscr{E}$ is a Polish space and $I$ is a good rate function, then the LDP and
			Laplace principle are equivalent from the Varadhan's lemma \cite{Va},  Budhiraja and  Dupuis \cite{Budhiraja1}.
			
			Next, we give the sufficient condition for a fractional version Laplace principle (see Fan et al. \cite{Fan}).
			\begin{hypothesis}\label{hypo3.4}
				There exists a measurable map $\mathcal{G}^{0}:I_{0+}^{H+\frac{1}{2}}(L^{2}([0,T],\mathbb{R}^{n+m}))\rightarrow\mathscr{E}$ for which the following two conditions hold:
				\begin{itemize}
					\item [(i)]
					Let $\{h^{\delta}\}_{\delta>0}\subset S_{M}$ for any $M\in (0, \infty)$ such that $h^{\delta}$ converges to element $h$ in $S_{M}$ as $\delta\rightarrow0$, then $\mathcal{G}^{0}(\int_{0}^{t}\dot{h}^{\delta}_{s}ds)$ converges to $\mathcal{G}^{0}(\int_{0}^{t}\dot{h}_{s}ds)$ in $\mathscr{E}$.
					
					\item [(ii)] Let $\{h^{\delta}\}_{\delta>0}\subset\mathcal{A}_{M}$ for any $M\in (0, \infty)$. For any $\epsilon_{0}>0$, we have	$$\lim_{\delta\rightarrow0}\mathbb{P}\Big(d(\mathcal{G}^{\delta}(\delta^{H}B^{H}_{t}
					+\int_{0}^{t}\dot{h}^{\delta}_{s}ds),\mathcal{G}^{0}(\int_{0}^{t}\dot{h}_{s}ds))>\epsilon_{0}\Big)=0,$$
					where $d(\cdot,\cdot)$ denotes the metric in $\mathscr{E}.$
				\end{itemize}
			\end{hypothesis}
			
			\begin{lemma}\label{sec3-lemma3.5}(\cite{Bourguin})
				If $X^{\delta}=\mathcal{G}^{\delta}(\delta^{H}B^{H}_{\cdot})$ and Hypothesis \ref{hypo3.4} holds, then the family $\{X^{\delta}\}_{\delta>0}$
				satisfies the Laplace principle (hence the LDP) in $\mathscr{E}$ with the good rate function $I$ given by
				\begin{equation}\label{sec3-eq3.0}	I(f)=\inf_{\{h\in\mathcal{H}:f=\mathcal{G}^{0}(\int_{0}^{\cdot}\dot{h}_{t}dt)\}}\Big\{\frac{1}{2}\int_{0}^{\cdot}|\dot{h}_{t}|^2dt\Big\},\qquad f\in\mathscr{E},
				\end{equation}
				where infimum over an empty set is taken as $+\infty.$
			\end{lemma}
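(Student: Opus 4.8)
The plan is to follow the standard weak convergence (Budhiraja--Dupuis) scheme, adapted to the fractional setting through the kernel representation $B^{H}=K_{H}\dot{W}$, so that functionals of $B^{H}$ become functionals of an underlying standard Brownian motion to which the Bou\'{e}--Dupuis variational formula applies. The starting point is the variational representation: for every bounded continuous $\phi$,
$$-\delta\log\mathbb{E}\Big[\exp\Big(-\frac{1}{\delta}\phi(X^{\delta})\Big)\Big]=\inf_{h\in\mathcal{A}}\mathbb{E}\Big[\frac{1}{2}\|h\|^{2}_{\mathcal{H}}+\phi\Big(\mathcal{G}^{\delta}\Big(\delta^{H}B^{H}_{\cdot}+\int_{0}^{\cdot}\dot{h}_{s}\,ds\Big)\Big)\Big].$$
With this identity in hand, the Laplace principle splits into matching upper and lower bounds, and the goodness of $I$ becomes a separate compactness statement.

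For the Laplace upper bound I would fix $\eta>0$ and choose $f_{0}\in\mathscr{E}$ together with $h\in\mathcal{H}$ satisfying $f_{0}=\mathcal{G}^{0}(\int_{0}^{\cdot}\dot{h}_{t}\,dt)$ and $\frac{1}{2}\|h\|^{2}_{\mathcal{H}}+\phi(f_{0})\le\inf_{f\in\mathscr{E}}\{\phi(f)+I(f)\}+\eta$. Using this deterministic $h$ as a suboptimal control in the representation gives $-\delta\log\mathbb{E}[\exp(-\frac{1}{\delta}\phi(X^{\delta}))]\le\mathbb{E}[\frac{1}{2}\|h\|^{2}_{\mathcal{H}}+\phi(\mathcal{G}^{\delta}(\delta^{H}B^{H}_{\cdot}+\int_{0}^{\cdot}\dot{h}_{s}\,ds))]$. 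Part (ii) of Hypothesis \ref{hypo3.4} forces $\mathcal{G}^{\delta}(\delta^{H}B^{H}_{\cdot}+\int_{0}^{\cdot}\dot{h}_{s}\,ds)\to f_{0}$ in probability, so by the boundedness and continuity of $\phi$ together with dominated convergence the right-hand side tends to $\frac{1}{2}\|h\|^{2}_{\mathcal{H}}+\phi(f_{0})$. Hence $\limsup_{\delta\to0}(\cdots)\le\inf_{f}\{\phi(f)+I(f)\}+\eta$, and letting $\eta\downarrow0$ completes this half.

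For the Laplace lower bound, for each $\delta$ I would select near-optimal random controls $h^{\delta}\in\mathcal{A}$ attaining the infimum up to an additive $\delta$; boundedness of $\phi$ then yields a uniform bound $\sup_{\delta}\frac{1}{2}\mathbb{E}\|h^{\delta}\|^{2}_{\mathcal{H}}\le M$ for some $M<\infty$, placing $h^{\delta}$ in $\mathcal{A}_{M}$. Since $S_{M}$ endowed with the weak topology is a compact Polish space, the laws of $\{h^{\delta}\}$ are tight and, after passing to a subsequence and invoking the Skorokhod representation, $h^{\delta}\rightharpoonup h$ with $h\in S_{M}$. Part (ii) of Hypothesis \ref{hypo3.4} again supplies convergence of the controlled processes in probability, and combining the weak lower semicontinuity of $\|\cdot\|^{2}_{\mathcal{H}}$ with Fatou's lemma, the continuity of $\phi$ and the definition of $I$ gives $\liminf_{\delta\to0}(\cdots)\ge\frac{1}{2}\|h\|^{2}_{\mathcal{H}}+\phi(\mathcal{G}^{0}(\int_{0}^{\cdot}\dot{h}_{t}\,dt))\ge\inf_{f}\{\phi(f)+I(f)\}$.

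Finally, to check that $I$ is a good rate function I would take any level set $\{f\in\mathscr{E}:I(f)\le K\}$ and a sequence $f_{n}$ in it; near-optimal $h_{n}\in S_{K+1}$ with $f_{n}=\mathcal{G}^{0}(\int_{0}^{\cdot}\dot{h}_{n,t}\,dt)$ admit a weakly convergent subsequence $h_{n}\rightharpoonup h$ in $S_{K+1}$, and part (i) of Hypothesis \ref{hypo3.4} yields $f_{n}\to\mathcal{G}^{0}(\int_{0}^{\cdot}\dot{h}_{t}\,dt)=:f$, while weak lower semicontinuity of the norm gives $\frac{1}{2}\|h\|^{2}_{\mathcal{H}}\le K$, hence $I(f)\le K$ and the level set is compact. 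The step I expect to be the main obstacle is the rigorous justification of the variational representation in the fractional setting: one must transport controls in $\mathcal{H}_{H}$ to Cameron--Martin shifts of the driving Brownian motion through the isometry $B^{H}=K_{H}\dot{W}$ and the fractional-calculus identities for $\dot{K}_{H}$ recorded in Lemma \ref{sec2-lemma2.0}. Once this representation and the weak compactness of $S_{M}$ are secured, the remaining passages to the limit are routine consequences of Hypothesis \ref{hypo3.4}.
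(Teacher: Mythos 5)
The paper contains no proof of Lemma \ref{sec3-lemma3.5}: it is quoted as a black box from Bourguin, Dang and Spiliopoulos \cite{Bourguin}, and the paper's own work is confined to verifying Hypothesis \ref{hypo3.4} for the system \eqref{sec1-eq1.2} (Propositions \ref{sec4-prop4.2} and \ref{sec4-prop4.3}). So your proposal can only be measured against the cited source, and in outline it agrees with it: the Budhiraja--Dupuis scheme of variational representation, Laplace upper bound via a near-optimal deterministic control together with condition (ii), Laplace lower bound via tightness of near-optimal random controls in the weakly compact set $S_{M}$ plus Skorokhod representation, Fatou and weak lower semicontinuity, and goodness of $I$ via condition (i). One structural caveat: the version of (ii) that the paper actually verifies in Proposition \ref{sec4-prop4.3} compares $\mathcal{G}^{\delta}$ with $\mathcal{G}^{0}$ evaluated along the \emph{same} control $h^{\delta}$, so your lower bound also needs condition (i), applied pathwise after the Skorokhod step, to pass from $\mathcal{G}^{0}(\int_{0}^{\cdot}\dot{h}^{\delta}_{s}ds)$ to $\mathcal{G}^{0}(\int_{0}^{\cdot}\dot{h}_{s}ds)$; as written, your Laplace bounds never invoke (i) at all.

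Beyond that, two steps have genuine gaps. First, in the lower bound you pass from $\sup_{\delta}\frac{1}{2}\mathbb{E}\|h^{\delta}\|^{2}_{\mathcal{H}}\leq M$ to the assertion $h^{\delta}\in\mathcal{A}_{M}$. This is false as stated: membership in $\mathcal{A}_{M}$ requires the almost sure bound $\frac{1}{2}\|h^{\delta}\|^{2}_{\mathcal{H}}\leq M$, not a bound in expectation. The standard repair is to replace $h^{\delta}$ by the control switched off at the stopping time at which its accumulated cost first reaches $2M$; Chebyshev shows this modification occurs on an event of probability at most $C/M$, boundedness of $\phi$ shows near-optimality is degraded only by $O(1/M)$, and one sends $M\to\infty$ at the very end. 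Second, the variational representation you take as the starting point is precisely the fractional-specific heart of the lemma, and with the paper's own normalizations it is not correct as written: transferring the Bou\'{e}--Dupuis formula for the underlying Brownian motion through $B^{H}=K_{H}\dot{W}$, with noise magnitude $\delta^{H}$ and the Laplace speed $1/\delta$ of Definition \ref{def3.3}, and keeping the cost normalized to $\frac{1}{2}\|h\|^{2}_{\mathcal{H}}$, produces the controlled noise $\delta^{H}B^{H}_{\cdot}+\delta^{H-\frac{1}{2}}\int_{0}^{\cdot}\dot{h}_{s}ds$; the prefactor $\delta^{H-\frac{1}{2}}$ equals $1$ only when $H=\frac{1}{2}$. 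The shift $\int_{0}^{\cdot}\dot{h}_{s}ds$ at cost $\frac{1}{2}\|h\|^{2}_{\mathcal{H}}$ corresponds instead to the speed $\delta^{-2H}$ for the fractional component, whereas the factor $1/\sqrt{\delta\epsilon}$ multiplying the control in the fast equation of \eqref{sec1-eq1.7} corresponds to speed $1/\delta$ for the Brownian component. Fixing one consistent speed and carrying the resulting $\delta$-factors through both Laplace bounds is exactly the bookkeeping the cited reference is designed to handle; since you flag this as the main obstacle but leave it open, the central identity of your argument remains unestablished.
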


			Our main result of LDP is as follows.
			\begin{theorem}\label{sec3-th3.5}
				Suppose that assumptions $\mathbf{(H1)}$, $\mathbf{(H2)}$ and the following conditions hold. 
				\begin{itemize}
					\item [(i)] There exists a constant $C_{T}>0$ such that for any $x\in\mathbb{R}^{n}$, $\mu\in\mathscr{P}_{2}(\mathbb{R}^{n})$,
					\begin{equation}\label{sec3-eq3.1}
						\sup_{y\in\mathbb{R}^{m}}\|g(t,x,\mu,y)\|\leq C_{T}\Big(1+|x|+[\mu(|\cdot|^{2})]^{\frac{1}{2}}\Big).
					\end{equation}
					\item [(i)] The scale parameters $\delta$ and $\epsilon$ satisfy
					\begin{equation}\label{sec3-eq3.2}
						\lim_{\delta\rightarrow0}\frac{\epsilon}{\delta}=0.
					\end{equation}
				\end{itemize}
				Then the solution $\{X^{\delta,\epsilon}\}_{\delta>0}$ of Eq. \eqref{sec1-eq1.2} satisfies the LDP in $C([0,T];\mathbb{R}^{n})$ with the good rate function $I$ given by \eqref{sec3-eq3.0}
				and map $\mathcal{G}^{0}$ will be defined in (\ref{sec1-eq1.6}).
			\end{theorem}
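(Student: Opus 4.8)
The plan is to invoke Lemma \ref{sec3-lemma3.5}, which reduces the LDP to verifying the two conditions of Hypothesis \ref{hypo3.4} for an appropriate limiting map $\mathcal{G}^{0}$. First I would identify $\mathcal{G}^{0}$ as the solution map of the controlled averaged (skeleton) equation. Writing $h=(u,v)=(K_{H}\hat{u},K_{1/2}\hat{v})\in\mathcal{H}$ and letting $\Psi^{0}$ denote the solution of the deterministic averaged equation obtained from \eqref{sec1-eq1.12} with vanishing noise, I set $\mathcal{G}^{0}(\int_{0}^{\cdot}\dot{h}_{s}\,ds)=\Psi^{h}$, where
\begin{equation*}
\Psi_{t}^{h}=x+\int_{0}^{t}\bar{b}(s,\Psi_{s}^{h},\delta_{\Psi_{s}^{0}})\,ds+\int_{0}^{t}\sigma(s,\delta_{\Psi_{s}^{0}})\dot{u}_{s}\,ds,
\end{equation*}
with $\dot{u}_{s}=\dot{K}_{H}\hat{u}(s)$ given by Lemma \ref{sec2-lemma2.0} and $\bar{b}$ the averaged drift \eqref{sec1-eq1.31}. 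The Dirac mass $\delta_{\Psi_{s}^{0}}$ appears because the distribution argument in the coefficients is the law of the \emph{uncontrolled} process $X^{\delta,\epsilon}$, which collapses to the deterministic trajectory $\Psi^{0}$ as $\delta\to0$; in particular only the fractional control $\hat{u}$ survives in the limit. Before verifying Hypothesis \ref{hypo3.4} I would record that $\bar{b}$ inherits from $\mathbf{(H1)}$ a continuity modulus governed by $\kappa$ — which requires first establishing H\"{o}lder dependence of the invariant measure $\nu^{t,x,\mu}$ on $(t,x,\mu)$, a consequence of the dissipativity in $\mathbf{(H2)}$ (exponential ergodicity of the frozen equation) — so that the skeleton equation is well posed by Picard iteration combined with the Osgood/Bihari inequality stemming from $\int_{0^{+}}\kappa(u)^{-1}du=+\infty$.

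For Hypothesis \ref{hypo3.4}(i) I would take $h^{\delta}\to h$ in $S_{M}$ with its weak topology, i.e. $\hat{u}^{\delta}\rightharpoonup\hat{u}$ weakly in $L^{2}$. Since $\Psi^{0}$ does not depend on the control, the only term to pass to the limit is the linear term $\int_{0}^{\cdot}\sigma(s,\delta_{\Psi_{s}^{0}})\dot{K}_{H}\hat{u}^{\delta}(s)\,ds$. Using that $\dot{K}_{H}$ is a bounded operator on $L^{2}$ and that $s\mapsto\sigma(s,\delta_{\Psi_{s}^{0}})$ is bounded and continuous, I would show this term converges in $C([0,T];\mathbb{R}^{n})$ by testing the weak-$L^{2}$ convergence against the integration-against-$\sigma$ operator and invoking the equicontinuity granted by $\|\hat{u}^{\delta}\|_{L^{2}}^{2}\le2M$ (Arzel\`{a}--Ascoli). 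A Bihari argument on $|\Psi_{t}^{h^{\delta}}-\Psi_{t}^{h}|$ then upgrades this to convergence of the solutions.

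The substance of the proof, and the main obstacle, is Hypothesis \ref{hypo3.4}(ii): the controlled slow process produced by $\mathcal{G}^{\delta}(\delta^{H}B^{H}+\int_{0}^{\cdot}\dot{h}_{s}^{\delta}\,ds)$ via Girsanov, namely
\begin{equation*}
\widehat{X}_{t}^{\delta,\epsilon}=x+\int_{0}^{t}b(s,\widehat{X}_{s}^{\delta,\epsilon},\mathscr{L}_{X_{s}^{\delta,\epsilon}},\widehat{Y}_{s}^{\delta,\epsilon})\,ds+\delta^{H}\!\int_{0}^{t}\sigma(s,\mathscr{L}_{X_{s}^{\delta,\epsilon}})\,dB_{s}^{H}+\int_{0}^{t}\sigma(s,\mathscr{L}_{X_{s}^{\delta,\epsilon}})\dot{u}_{s}^{\delta}\,ds
\end{equation*}
(the law in the coefficients being that of the uncontrolled process), must converge in $\mathbb{P}$-probability to $\Psi^{h}$ in $C([0,T];\mathbb{R}^{n})$. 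I would proceed in steps: (a) derive moment bounds for $\widehat{X}^{\delta,\epsilon}$ and $\widehat{Y}^{\delta,\epsilon}$ uniform in $(\delta,\epsilon)$, using Lemma \ref{sec2-lemma2.1} for the fractional integral and the dissipativity $\mathbf{(H2)}$ for the fast component; here the sublinear bound \eqref{sec3-eq3.1} is decisive, since boundedness of $g$ in $y$ lets the cross term $\tfrac{1}{\sqrt{\epsilon}}\langle\widehat{Y},g\dot{v}^{\delta}\rangle$ be absorbed into $-\beta_{2}|\widehat{Y}|^{2}$, leaving only the bounded contribution $\int_{0}^{T}|\dot{v}_{s}^{\delta}|^{2}ds\le2M$; (b) note the genuine fBm integral is of order $\delta^{H}$ and hence vanishes; (c) carry out Khasminskii time-discretization, partitioning $[0,T]$ into blocks of width $\Delta$, freezing the slow variable and the measure on each block, introducing an auxiliary fast process, and using the exponential ergodicity from $\mathbf{(H2)}$ to replace the time-average of $b(s,\cdot,\cdot,\widehat{Y}_{s})$ by the spatial average $\bar{b}$ against $\nu^{t,x,\mu}$; (d) show $\mathbb{W}_{\theta}(\mathscr{L}_{X_{s}^{\delta,\epsilon}},\delta_{\Psi_{s}^{0}})\to0$ from the uniform moment bounds and the vanishing noise.

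The scale condition $\lim_{\delta\to0}\epsilon/\delta=0$ in \eqref{sec3-eq3.2} enters decisively in step (c): it guarantees that the fast dynamics equilibrate on a timescale negligible against the $\delta^{H}$-noise, so that the time-discretization error, the ergodic averaging error, and the distribution dependence can be balanced simultaneously against $\Delta$, $\epsilon$ and $\delta$, and the fast control (carrying the $1/\sqrt{\epsilon}$ factor but tamed by \eqref{sec3-eq3.1} and $\|\hat{v}^{\delta}\|_{L^{2}}^{2}\le2M$) washes out of the averaged drift. This coupling of fractional estimates with ergodic averaging under distribution dependence is the hard part; assembling these estimates and closing with the Bihari inequality yields the convergence to $\Psi^{h}$, after which Lemma \ref{sec3-lemma3.5} delivers the claimed LDP in $C([0,T];\mathbb{R}^{n})$ with good rate function \eqref{sec3-eq3.0}.
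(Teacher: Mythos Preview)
Your outline matches the paper's proof almost exactly: the same skeleton map $\mathcal{G}^{0}$ with the Dirac law $\delta_{\Psi_{s}^{0}}$, Arzel\`a--Ascoli plus a Bihari/Osgood argument for Hypothesis \ref{hypo3.4}(i), and for (ii) a priori moment bounds, Khasminskii time-discretization with an auxiliary frozen fast process, exponential ergodicity from $\mathbf{(H2)}$, and a final Bihari closure. One correction: in the controlled fast equation the Girsanov shift produces the term $\tfrac{1}{\sqrt{\delta\epsilon}}\,g(\cdot)\dot{v}_{t}^{\delta}\,dt$, not $\tfrac{1}{\sqrt{\epsilon}}$; after Young's inequality this yields a $\tfrac{C}{\delta}(1+|\widehat X|^{2}+\mu(|\cdot|^{2}))|\dot v^{\delta}|^{2}$ contribution which, through the comparison argument, becomes a factor $\epsilon/\delta$---so the scale condition \eqref{sec3-eq3.2} already enters in your step (a), not only in step (c).
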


			\section{Proof of large deviations}\label{sec4}
			In order to carry out the complete proof of the LDP, we need to formulate the correct form of the skeleton equation. Actually, it is easy to find as the parameter $\delta\rightarrow0$ in Eq. (\ref{sec1-eq1.2}) (hence $\epsilon\rightarrow0$ also), the drift term is averaged and the noise term vanishes, then we can get the following differential equation
			
			\begin{equation}\label{sec1-eq1.4}
				\left\{
				\begin{aligned}
					&d\bar{X}_{t}^{0}=\bar{b}(t,\bar{X}_{t}^{0},\mathscr{L}_{\bar{X}_{t}^{0}})dt,\\
					& \bar{X}_{0}^{0}=x \in\mathbb{R}^{n},
				\end{aligned}
				\right.
			\end{equation}
			where $\mathscr{L}_{\bar{X}_{t}^{0}}=\delta_{\bar{X}_{t}^{0}}$ is the Dirac measure of $\bar{X}_{t}^{0}$.
			Then for $h$ belonging to the Cameron-Martin space $\mathcal{H}$ defined in Section \ref{sec2}, we can define the following skeleton equation with respect to the slow equation in Eq. (\ref{sec1-eq1.2})
			
			\begin{equation}\label{sec1-eq1.5}
				\left\{
				\begin{aligned}
					&d\bar{X}_{t}^{h}=\bar{b}(t,\bar{X}_{t}^{h},\mathscr{L}_{\bar{X}_{t}^{0}})dt+\sigma(t,\mathscr{L}_{\bar{X}_{t}^{0}})\dot{u}_{t}dt,\\
					&\bar{X}_{0}^{h}=x\in\mathbb{R}^{n},
				\end{aligned}
				\right.
			\end{equation}
			where $\bar{X}^{0}$ is the solution of Eq. (\ref{sec1-eq1.4}) and $\bar{b}$ is defined by  (\ref{sec1-eq1.31}), $\dot{u}$ is defined by Lemma \ref{sec2-lemma2.0}. Furthermore, we assume that $\sigma$ is H\"{o}lder continous of order belonging to $(1-H,1]$ with respect to the time variable.  Then, $\int_{0}^{t}\sigma(t,\mathscr{L}_{\bar{X}_{t}^{0}})\dot{u}_{t}dt$ in Eq. \eqref{sec1-eq1.5} is well-defined as a Riemann-Stieltjes integral. Thus, we can define a map
			$ \mathcal{G}^{0}:I_{0+}^{H+\frac{1}{2}}(L^{2}([0,T];\mathbb{R}^{n+m}))\rightarrow C([0,T];\mathbb{R}^{n})$
			\begin{equation}\label{sec1-eq1.6}
				\mathcal{G}^{0}\Big(\int_{0}^{t}\dot{u}_{s}ds\Big)=\bar{X}^{h}_{t},
			\end{equation}
			where the definition of space $I_{0+}^{H+\frac{1}{2}}(L^{2}([0,T];\mathbb{R}^{n+m}))$ has been introduced in Section \ref{sec2}.
			
			Since the existence and uniqueness for solutions of Eq. \eqref{sec1-eq1.2} has been established in Shen et al. \cite{Shen2}. Thus,
			according to the classical Yamada-Watanabe theorem (see  Hong et al. \cite{Hong}), there exists a measurable map $\mathcal{G}_{\mu^{\delta,\epsilon}}:C([0,T];\mathbb{R}^{n+m})\rightarrow C([0,T];\mathbb{R}^{n})$ such that we have the representation
			$$X_{t}^{\delta,\epsilon}=\mathcal{G}_{\mu^{\delta,\epsilon}}(\delta^{H}B^{H}_{t}).$$
			For simplicity of notation, we denote $\mathcal{G}^{\delta}=\mathcal{G}_{\mu^{\delta,\epsilon}}.$ Then for any $h^{\delta}\in \mathcal{A}_{M}$, we define
			$$X_{t}^{\delta,\epsilon,h^{\delta}}=\mathcal{G}^{\delta}(\delta^{H}B^{H}_{t}+\int_{0}^{t}\dot{h}^{\delta}_{s}ds),$$
			then $X_{t}^{\delta,\epsilon,h^{\delta}}$ satisfies the following  stochastic control equations
			\begin{equation}\label{sec1-eq1.7}
				\left\{
				\begin{aligned}
					&dX_{t}^{\delta,\epsilon,h^{\delta}}=b(t,X_{t}^{\delta,\epsilon,h^{\delta}},\mathscr{L}_{X_{t}^{\delta,\epsilon}},Y_{t}^{\delta,\epsilon,h^{\delta}})dt+\sigma(t,\mathscr{L}_{X_{t}^{\delta,\epsilon}})\dot{u}^{\delta}_{t}dt	+\delta^{H}\sigma(t,\mathscr{L}_{X_{t}^{\delta,\epsilon}})dB_{t}^{H},\\
					&dY_{t}^{\delta,\epsilon,h^{\delta}}=\frac{1}{\epsilon}f(t,X_{t}^{\delta,\epsilon,h^{\delta}},\mathscr{L}_{X_{t}^{\delta,\epsilon}},Y_{t}^{\delta,\epsilon,h^{\delta}})dt+\frac{1}{\sqrt{\delta\epsilon}}g(t,X_{t}^{\delta,\epsilon,h^{\delta}},\mathscr{L}_{X_{t}^{\delta,\epsilon}},Y_{t}^{\delta,\epsilon,h^{\delta}})\dot{v}_{t}^{\delta}dt\\
					&\qquad\qquad+\frac{1}{\sqrt{\epsilon}}g(t,X_{t}^{\delta,\epsilon,h^{\delta}},\mathscr{L}_{X_{t}^{\delta,\epsilon}},Y_{t}^{\delta,\epsilon,h^{\delta}})dW_{t},\\
					&X_{0}^{\delta,\epsilon,h^{\delta}}=x\in \mathbb{R}^{n},\qquad Y_{0}^{\delta,\epsilon,h^{\delta}}=y\in \mathbb{R}^{m}.
				\end{aligned}
				\right.
			\end{equation}

			\subsection{Some priori  estimates}\label{sec4.1}
			In order to prove the main result, we just need to verify the weak convergence criterions $(i)$ and $(ii)$ in  Hypothesis \ref{hypo3.4}, which will be presented in Propositions \ref{sec4-prop4.2} and \ref{sec4-prop4.3}, respectively. We  first give some necessary estimates.
			
			\begin{lemma}\label{sec4-lemma4.1}
				Suppose that assumptions $\mathbf{(H1)}$, $\mathbf{(H2)}$ and the condition \eqref{sec3-eq3.2} hold.  For any $x\in\mathbb{R}^{n}$ and $h\in\mathcal{H}$, there exists a unique solution to Eq. \eqref{sec1-eq1.5} satisfying
				$$\sup_{h\in S_{M}}\Big\{\sup_{t\in[0,T]}|\bar{X}_{t}^{h}|^{2}\Big\}\leq C_{T,H,M,|x|},$$
				where $C_{T,H,M,|x|}$ is a positive constant depending on $T,H,M,|x|$.
			\end{lemma}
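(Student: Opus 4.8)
The plan is to exploit the crucial structural feature that the measure argument appearing in both Eq.~\eqref{sec1-eq1.4} and Eq.~\eqref{sec1-eq1.5} is the \emph{deterministic} Dirac law $\mathscr{L}_{\bar{X}_t^0}=\delta_{\bar{X}_t^0}$. Consequently neither equation is genuinely distribution dependent: Eq.~\eqref{sec1-eq1.4} is a closed ordinary differential equation for $\bar X^0$, and once $\bar X^0$ (hence its law) is fixed, Eq.~\eqref{sec1-eq1.5} becomes an ordinary differential equation for $\bar X^h$ with a controlled forcing term $\sigma(t,\mathscr{L}_{\bar X^0_t})\dot u_t$. I would therefore treat the two equations in succession.

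First I would record the properties of the averaged coefficient $\bar b$ inherited from $\mathbf{(H1)}$--$\mathbf{(H2)}$. The second inequality of $\mathbf{(H2)}$ yields, via a standard Lyapunov argument for the frozen equation, the second-moment bound $\int_{\mathbb{R}^m}|z|^2\,\nu^{t,x,\mu}(dz)\le \frac{C_T}{\beta_2}\bigl(1+|x|^2+\mu(|\cdot|^2)\bigr)$ for the invariant measure, whose existence and uniqueness are guaranteed by the dissipativity $-\beta_1|y_1-y_2|^2$ in the first inequality of $\mathbf{(H2)}$. Combining this with the growth of $b$ coming from $\mathbf{(H1)}$ (using concavity of $\kappa$, so that $\kappa(u)\le C(1+u)$) and the definition \eqref{sec1-eq1.31} of $\bar b$, I obtain a linear growth estimate $|\bar b(t,x,\mu)|\le C_T\bigl(1+|x|+\mu(|\cdot|^2)^{1/2}\bigr)$ together with a $\kappa$-type modulus of continuity. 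These are exactly the ingredients needed for Osgood-type well-posedness and Gronwall estimates, and they are established in Shen et al.~\cite{Shen2}, which I would invoke directly.

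For Eq.~\eqref{sec1-eq1.4}, since $\delta_{\bar X^0_t}(|\cdot|^2)=|\bar X^0_t|^2$, existence and uniqueness follow from the Osgood condition $\int_{0^+}\kappa(u)^{-1}\,du=+\infty$, and the linear growth of $\bar b$ together with Gronwall gives $\sup_{t\in[0,T]}|\bar X^0_t|^2\le C_{T,|x|}$. Turning to Eq.~\eqref{sec1-eq1.5}, I would write it in integral form and estimate $|\bar X^h_t|^2$ by splitting into the initial datum, the drift term, and the control term. The drift term is controlled by $C\int_0^t(1+|\bar X^h_s|^2+|\bar X^0_s|^2)\,ds$ using the growth of $\bar b$ and the bound on $\bar X^0$ just obtained. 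The control term is where the fractional calculus enters: by Cauchy--Schwarz,
\begin{equation*}
\Bigl|\int_0^t\sigma(s,\mathscr{L}_{\bar X^0_s})\dot u_s\,ds\Bigr|^2\le\Bigl(\int_0^T\|\sigma(s,\mathscr{L}_{\bar X^0_s})\|^2\,ds\Bigr)\Bigl(\int_0^T|\dot u_s|^2\,ds\Bigr),
\end{equation*}
where the first factor is bounded by $C_T$ (growth of $\sigma$ evaluated at the bounded Dirac law, since $\mathbb{W}_\theta(\delta_{\bar X^0_s},\delta_0)=|\bar X^0_s|$ is bounded), and for the second factor I use $\dot u=\dot K_H\hat u$ together with the operator bound $\|\dot K_H\hat u\|_{L^2}\le C_H\|\hat u\|_{L^2}$ from Section~\ref{sec2}. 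Since $h=(K_H\hat u,K_{1/2}\hat v)\in S_M$ forces $\|\hat u\|^2_{L^2}\le 2M$, this gives $\int_0^T|\dot u_s|^2\,ds\le 2C_H^2 M$ uniformly in $h$, so the control term is bounded by $C_{T,H,M}$ independently of the particular $h\in S_M$.

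Assembling the three pieces yields $|\bar X^h_t|^2\le C_{T,H,M,|x|}+C\int_0^t|\bar X^h_s|^2\,ds$ with constants independent of $h\in S_M$, and Gronwall's inequality closes the estimate to produce the claimed uniform bound, while Osgood's criterion again delivers existence and uniqueness for Eq.~\eqref{sec1-eq1.5}. The main obstacle, I expect, is not the Gronwall machinery but the first step: extracting the linear growth and continuity of the averaged drift $\bar b$ from the invariant-measure moment bound and the merely $\kappa$-continuous (rather than Lipschitz) dependence of $b$. I note finally that condition \eqref{sec3-eq3.2} plays no direct role in this particular estimate, as Eq.~\eqref{sec1-eq1.5} is free of the scale parameters $\delta,\epsilon$; it is carried along as a standing hypothesis of the theorem.
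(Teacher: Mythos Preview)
Your proposal is correct and follows essentially the same approach as the paper: both arguments exploit that the skeleton equation is a genuine ODE once the deterministic law $\mathscr{L}_{\bar X^0_t}$ is fixed, invoke the continuity/growth of $\bar b$ from Shen et al.~\cite{Shen2}, control the forcing term via the operator bound $\|\dot K_H\hat u\|_{L^2}\le C_H\|\hat u\|_{L^2}$ combined with $h\in S_M$, and close with Gronwall. The only cosmetic difference is that the paper expands $|\bar X_t^h|^2$ via the change-of-variables formula (yielding inner-product terms $J_1,J_2$) whereas you square the integral form directly and apply Cauchy--Schwarz; both routes produce the same Gronwall inequality, and your observation that condition~\eqref{sec3-eq3.2} is irrelevant here is also accurate.
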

			
			\begin{proof}
				It is easy to see that $\bar{b}(t,\bar{X}_{t}^{h},\mathscr{L}_{\bar{X}_{t}^{0}})$ and $\sigma(t,\mathscr{L}_{\bar{X}_{t}^{0}})$ independent of the measure satisfy $\mathbf{(H1)}$, which ensure the Eq. \eqref{sec1-eq1.5} has a unique solution (see Nualart and R\u{a}\c{s}canu \cite{Nualart0}). 
				It follows from  the change-of-variables formula  (Z\"{a}hle \cite{Zahle}), we have
				\begin{equation*}
					\begin{aligned}
						|\bar{X}_{t}^{h}|^{2}&=|x|^{2}+2\int_{0}^{t}\left\langle \bar{X}_{s}^{h},\bar{b}(s,\bar{X}_{s}^{h},\mathscr{L}_{\bar{X}_{s}^{0}})\right\rangle ds+2\int_{0}^{t} \left\langle \bar{X}_{s}^{h},\sigma(s,\mathscr{L}_{\bar{X}_{s}^{0}})\dot{u}_{s}\right\rangle ds\\
						&=:|x|^{2}+J_{1}(t)+J_{2}(t).
					\end{aligned}
				\end{equation*}
				For the term $J_{1}(t),$ we can obtain
				\begin{equation*}
					J_{1}(t)\leq2\int_{0}^{t}|\bar{X}_{s}^{h}|\cdot|\bar{b}(s,\bar{X}_{s}^{h},\mathscr{L}_{\bar{X}_{s}^{0}})-\bar{b}(s,0,\delta_{0})|ds+2\int_{0}^{t}|\bar{X}_{s}^{h}|\cdot|\bar{b}(s,0,\delta_{0})|ds.
				\end{equation*}
				According to  Shen et al. \cite{Shen2}, $\bar{b}(t,x,\mu)$   satisfies
				\begin{equation*}\label{sec4-eq4.1}
					|\bar{b}(t_{1},x_{1},\mu_{1})-\bar{b}(t_{2},x_{2},\mu_{2})|^{2}\leq K(|t_{1}-t_{2}|^{2})[\kappa(|x_{1}-x_{2}|^{2}+\mathbb{W}_{2}(\mu_{1},\mu_{2})^{2})],
				\end{equation*}
				where the non-decreasing function $K(t)$ and function $\kappa(\cdot)$	are the same defined in assumptions $\mathbf{(H1)}$.
				Given that $\kappa(\cdot)$ is concave and increasing, there must exists a positive number $a$ such that
				$$\kappa(u) \leq a(1+u).$$
				Therefore, we  have
				\begin{equation}\label{sec4-eq4.2}
					\begin{aligned}
						J_{1}(t)&\leq 2\int_{0}^{t}(|\bar{X}_{s}^{h}|\cdot|\bar{b}(s,\bar{X}_{s}^{h},\mathscr{L}_{\bar{X}_{s}^{0}})-\bar{b}(s,0,\delta_{0})|)ds+2\int_{0}^{t}K(s)|\bar{X}_{s}^{h}|ds \\
						&\leq\int_{0}^{t}(|\bar{X}_{s}^{h}|^{2}+|\bar{b}(s,\bar{X}_{s}^{h},\mathscr{L}_{\bar{X}_{s}^{0}})-\bar{b}(s,0,\delta_{0})|^{2})ds+\int_{0}^{t}(K^{2}(s)+|\bar{X}_{s}^{h}|^{2})ds\\
						&\leq\int_{0}^{t}(|\bar{X}_{s}^{h}|^{2}+\kappa(|\bar{X}_{s}^{h}|^{2}+\mathbb{W}_{2}(\mathscr{L}_{\bar{X}_{s}^{0}},\delta_{0})^{2})ds+\int_{0}^{t}(K^{2}(s)+|\bar{X}_{s}^{h}|^{2})ds\\ &\leq\int_{0}^{t}(|\bar{X}_{s}^{h}|^{2}+a(1+|\bar{X}_{s}^{h}|^{2}+|\bar{X}_{s}^{0}|^{2}))ds+\int_{0}^{t}(K(s^{2})+|\bar{X}_{s}^{h}|^{2})ds\\
						&\leq{(2+a)\int_{0}^{t}|\bar{X}_{s}^{h}|^{2}ds+a\int_{0}^{t}|\bar{X}_{s}^{0}|^{2}ds+(K(t^{2})+a)t.}
					\end{aligned}
				\end{equation}
				For the term $J_{2}(t)$, using Lemma \ref{sec2-lemma2.0} and the isometry between $L^{2}([0,T];\mathbb{R}^{n})$ and $\mathcal{H}$, we have
				\begin{equation*}\label{sec4-eq4.3}
					\begin{aligned}
						J_{2}(t)&=2\int_{0}^{t}\left\langle \bar{X}_{s}^{h},\sigma(s,\mathscr{L}_{\bar{X}_{s}^{0}})\dot{u}_{s}\right\rangle ds 		\\
						&\leq \int_{0}^{t} \|\sigma(s,\mathscr{L}_{\bar{X}_{s}^{0}})\|^{2}|\dot{u}_{s}|^{2}ds+\int_{0}^{t}|\bar{X}_{s}^{h}|^{2}ds\\
						&= \int_{0}^{t} \|\sigma(s,\mathscr{L}_{\bar{X}_{s}^{0}})\|^{2}|\dot{K}_{H}\hat{u}_{s}|^{2}ds+\int_{0}^{t}|\bar{X}_{s}^{h}|^{2}ds\\
						&\leq C_{H}\int_{0}^{t} \|\sigma(s,\mathscr{L}_{\bar{X}_{s}^{0}})\|^{2}|\hat{u}_{s}|^{2}ds+\int_{0}^{t}|\bar{X}_{s}^{h}|^{2}ds\\
						&\leq C_{H} \|\sigma(\cdot,\mathscr{L}_{\bar{X}_{\cdot}^{0}})\|_{\mathcal{H}}^{2}\cdot\|u_{\cdot}\|^{2}_{\mathcal{H}_{H}}+\int_{0}^{t}|\bar{X}_{s}^{h}|^{2}ds\\
						&\leq C_{H,T} \|\sigma(\cdot,\mathscr{L}_{\bar{X}_{\cdot}^{0}})\|_{L^{2}}^{2}\cdot\|u_{\cdot}\|^{2}_{\mathcal{H}_{H}}+\int_{0}^{t}|\bar{X}_{s}^{h}|^{2}ds.\
					\end{aligned}
				\end{equation*}
				
				Thus, it follows that
				\begin{equation}\label{sec4-eq4.4}
					\begin{aligned}
						J_{2}(t)
						&\leq \int_{0}^{t}|\bar{X}_{s}^{h}|^{2}ds+C_{H,T}K(t^{2}){(\kappa(|\bar{X}_{t}^{0}|)^{2}+1)}\cdot\|u_{t}\|^{2}_{\mathcal{H}_{H}}.
					\end{aligned}
				\end{equation}
				Therefore, combining the estimates (\ref{sec4-eq4.2}) with (\ref{sec4-eq4.4}), we   have
				\begin{equation*}\label{sec4-eq4.5}
					\begin{aligned}
						\sup_{t\in[0,T]}|\bar{X}_{t}^{h}|^{2}\leq& |x|^{2} +
						(3+a)\int_{0}^{T}(\sup_{r\in[0,s]}|\bar{X}_{r}^{h}|^{2})ds+{a(\sup_{t\in[0,T]}|}\bar{X}_{t}^{0}|^{2})+(K(T^{2})+a)T\\
						&+C_{H,T}K(T^{2})(a(1+\sup_{t\in[0,T]}|\bar{X}_{t}^{0}|^{2})+1)\cdot\|u_{T}\|^{2}_{\mathcal{H}_{H}}.\\
					\end{aligned}
				\end{equation*}
				
				Thus, for any $h\in S_{M}$ and using Gronwall's inequality and the boundness of $\sup_{t\in[0,T]}|\bar{X}_{t}^{0}|^{2}$ and $\|u_{\cdot}\|^{2}_{\mathcal{H}_{H}}$, we can obtain
				\begin{equation*}\label{sec4-eq4.6}
					\begin{aligned}
						\sup_{t\in[0,T]}|\bar{X}_{t}^{h}|^{2}\leq& \Big(|x|^{2}+a(\sup_{t\in[0,T]}|\bar{X}_{t}^{0}|^{2})+
						C_{H,T}K(T^{2})(a(1+\sup_{t\in[0,T]}|\bar{X}_{t}^{0}|^{2})+1)\cdot\|u_{T}\|^{2}_{\mathcal{H}_{H}}\\
						&+(K(T^{2})+a)T\Big)\exp \{T(3+a)\}\\
						\leq& C_{T,H,M,|x|}.
					\end{aligned}
				\end{equation*}
				This completes the proof.
			\end{proof}
			
			\begin{lemma}\label{sec4-lemma4.2}
				Under the assumptions in Theorem \ref{sec3-th3.5}, for any $\{h^{\delta}\}_{\delta>0}\subset\mathcal{A}_{M}$, there exists constant $C_{T,H,a,M, \beta_{2}}>0$ such that
				\begin{equation}\label{sec4-lemma4.2-1}
					\mathbb{E}[\sup_{t\in[0,T]}|X_{t}^{\delta,\epsilon,h^{\delta}}|^{2}]\leq C_{T,H,a,M ,\beta_{2}}(1+|x|^{2}+|y|^{2})
				\end{equation}
				and
				\begin{equation}\label{sec4-lemma4.2-2}
					\mathbb{E}[\int_{0}^{T}|Y_{t}^{\delta,\epsilon,h^{\delta}}|^{2}dt]\leq C_{T,H,a,M,\beta_{2}}(1+|x|^{2}+|y|^{2}).
				\end{equation}
			\end{lemma}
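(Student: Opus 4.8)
The plan is to establish two a~priori bounds that are coupled to one another and then to close them by means of the scaling assumption \eqref{sec3-eq3.2}. Write $(X_t,Y_t)=(X_t^{\delta,\epsilon,h^\delta},Y_t^{\delta,\epsilon,h^\delta})$ and $h^\delta=(u^\delta,v^\delta)$ for brevity, and set $\Phi(t)=\mathbb{E}\sup_{s\le t}|X_s|^2$ and $\Psi(t)=\mathbb{E}\int_0^t|Y_s|^2\,ds$. All computations would be carried out on the stochastic interval $[0,t\wedge\tau_N]$ with $\tau_N=\inf\{t:|X_t|+|Y_t|\ge N\}$, so that the $W$-stochastic integral has vanishing expectation, and one passes to the limit $N\to\infty$ by Fatou's lemma at the very end. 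I would also use as an input the uniform second-moment bound $\sup_{t\in[0,T]}\mathscr{L}_{X_t^{\delta,\epsilon}}(|\cdot|^2)=\sup_{t\in[0,T]}\mathbb{E}|X_t^{\delta,\epsilon}|^2\le C(1+|x|^2+|y|^2)$ for the original system \eqref{sec1-eq1.2}, available from Shen et al. \cite{Shen2}; this lets me treat every appearance of the law $\mathscr{L}_{X_t^{\delta,\epsilon}}$ as a bounded deterministic quantity, so that $\|\sigma(t,\mathscr{L}_{X_t^{\delta,\epsilon}})\|^2$ and the growth bounds stemming from $\mathbf{(H1)}$ contribute only controlled terms.

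For the fast component I would apply the It\^{o} formula to $|Y_t|^2$ along \eqref{sec1-eq1.7}. Collecting the $f$-drift with the It\^{o} correction of $\tfrac{1}{\sqrt\epsilon}g\,dW$ gives the factor $\tfrac1\epsilon\big(2\langle Y_t,f\rangle+\|g\|^2\big)$, to which the dissipativity estimate of $\mathbf{(H2)}$ applies and produces $-\tfrac{\beta_2}{\epsilon}|Y_t|^2+\tfrac{C_T}{\epsilon}\big(1+|X_t|^2+\mathscr{L}_{X_t^{\delta,\epsilon}}(|\cdot|^2)\big)$. The decisive term is the singular control contribution $\tfrac{2}{\sqrt{\delta\epsilon}}\langle Y_t,g\dot v_t^\delta\rangle$, which I would split by Young's inequality as
\begin{equation*}
\frac{2}{\sqrt{\delta\epsilon}}|Y_t|\,\|g\|\,|\dot v_t^\delta|\le\frac{\beta_2}{2\epsilon}|Y_t|^2+\frac{2}{\beta_2\delta}\|g\|^2\,|\dot v_t^\delta|^2,
\end{equation*}
so that the first piece is absorbed by half of the dissipative term. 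It is precisely here that condition \eqref{sec3-eq3.1} is indispensable: since it bounds $\|g\|$ uniformly in the $y$-variable, one has $\|g\|^2\le C(1+|X_t|^2+\mathscr{L}_{X_t^{\delta,\epsilon}}(|\cdot|^2))$, which does not reintroduce $|Y_t|^2$; were $\|g\|$ allowed to grow in $y$, the control term could not be dominated by $-\tfrac{\beta_2}{\epsilon}|Y_t|^2$. Taking expectations, retaining the running upper limit $t$, using $\int_0^T|\dot v_s^\delta|^2\,ds\le 2M$ a.s. (from $h^\delta\in\mathcal{A}_M$ and $\dot v=\hat v$), and multiplying by $\tfrac{2\epsilon}{\beta_2}$, I arrive at an estimate of the form
\begin{equation*}
\Psi(t)\le A_Y+\frac{2C_T}{\beta_2}\int_0^t\Phi(s)\,ds+\frac{C\,M\,\epsilon}{\delta}\,\Phi(t),
\end{equation*}
with $A_Y\le C(1+|x|^2+|y|^2)$; the key feature is that $\Phi(t)$ itself enters only through the last term, carrying the small factor $\epsilon/\delta$.

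For the slow component I would start from the integral form of the first equation in \eqref{sec1-eq1.7}, take the supremum over $s\le t$ and then the expectation. The drift $\int_0^sb\,dr$ is handled by Cauchy--Schwarz and the growth consequence of $\mathbf{(H1)}$ (via $\kappa(u)\le a(1+u)$), which yields $\int_0^t\Phi(s)\,ds$, $\Psi(t)$ and a bounded law term; the Riemann--Stieltjes term $\int_0^s\sigma\dot u_r^\delta\,dr$ is estimated by Cauchy--Schwarz together with the operator bound $\|\dot K_H\hat u^\delta\|_{L^2}\le C_H\|\hat u^\delta\|_{L^2}$ and $\|u^\delta\|_{\mathcal{H}_H}^2\le 2M$ (cf. the treatment of $J_2$ in Lemma \ref{sec4-lemma4.1}); and the genuine fractional term $\delta^H\!\int_0^s\sigma\,dB^H$ is controlled by the maximal inequality of Lemma \ref{sec2-lemma2.1}, applicable with $p=\theta=2>1/H$, giving a contribution of order $\delta^{2H}\!\int_0^T\|\sigma(r,\mathscr{L}_{X_r^{\delta,\epsilon}})\|^2\,dr$. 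This produces
\begin{equation*}
\Phi(t)\le A_X+C_X\int_0^t\Phi(s)\,ds+C_X\Psi(t),\qquad A_X\le C(1+|x|^2+|y|^2).
\end{equation*}

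Finally I would close the loop. Substituting the $\Psi$-bound into the $\Phi$-inequality gives
\begin{equation*}
\Phi(t)\le (A_X+C_XA_Y)+C_X\Big(1+\tfrac{2C_T}{\beta_2}\Big)\int_0^t\Phi(s)\,ds+\frac{C_X C\,M\,\epsilon}{\delta}\,\Phi(t).
\end{equation*}
By \eqref{sec3-eq3.2} we have $\epsilon/\delta\to0$, so for all $\delta$ small enough the coefficient of $\Phi(t)$ on the right is at most $\tfrac12$ and that term may be absorbed on the left; Gronwall's inequality applied to the resulting estimate then yields \eqref{sec4-lemma4.2-1}, and feeding this back into the $\Psi$-bound gives \eqref{sec4-lemma4.2-2}. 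I expect the principal obstacle to be exactly this closure: one must organize the fast-variable estimate so that $\Phi(t)=\mathbb{E}\sup_{s\le t}|X_s|^2$ appears in it \emph{only} through the term carrying $\epsilon/\delta$ (which is what forces the sharp use of the uniform-in-$y$ bound \eqref{sec3-eq3.1}), while the remaining order-one $\int_0^t\Phi\,ds$ contributions are left inside the Gronwall integral; one must also check that every constant is independent of $\delta,\epsilon$ and of $h^\delta\in\mathcal{A}_M$, so that the absorption step is legitimate uniformly for small $\delta$.
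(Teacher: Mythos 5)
Your proposal is correct and follows essentially the same route as the paper's proof: It\^{o}'s formula combined with the dissipativity in $\mathbf{(H2)}$ and a Young-inequality splitting of the singular control term that crucially exploits the uniform-in-$y$ bound \eqref{sec3-eq3.1} on $g$ for the fast component; the maximal inequality of Lemma \ref{sec2-lemma2.1}, the operator bound for $\dot{K}_{H}$ and $\mathbf{(H1)}$ for the slow component; and closure by absorbing the $(\epsilon/\delta)$-weighted $\mathbb{E}\sup|X^{\delta,\epsilon,h^{\delta}}|^{2}$ term using \eqref{sec3-eq3.2} before applying Gronwall's inequality. The only cosmetic differences are that you integrate the differential inequality directly (multiplying through by $2\epsilon/\beta_{2}$) where the paper invokes a comparison theorem with exponential kernels $e^{-k_{1}(t-s)/\epsilon}$, and you import the second-moment bound for the uncontrolled process from Shen et al.\ rather than rederiving it in-line as the paper does.
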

			
			\begin{proof}
				It comes from Eq. \eqref{sec1-eq1.7}, we have
				\begin{equation}\label{sec4-lemma4.2-3}
					\begin{aligned}
						\mathbb{E}\Big(\sup_{t\in[0,T]}|X_{t}^{\delta,\epsilon,h^{\delta}}|^{2}\Big)\leq&4|x|^{2}+C_{T}\mathbb{E}\Big(\sup_{t\in[0,T]}\int_{0}^{t}\kappa(1+|X_{s}^{\delta,\epsilon,h^{\delta}}|^{2}+|X_{s}^{\delta,\epsilon}|^{2}+|Y_{s}^{\delta,\epsilon,h^{\delta}}|^{2})ds\Big)\\
						&+C_{T,H}\|u_{T}^{\delta}\|^{2}_{\mathcal{H}_{H}}\mathbb{E}\Big(\int_{0}^{T}(1+\mathbb{W}_{2}(\mathscr{L}_{X_{s}^{\delta,\epsilon}},\delta_{0})^{2})ds \Big)\\
						&\quad+C_{T,H,\delta}\mathbb{E}\Big(\int_{0}^{T}(1+\mathbb{W}_{2}(\mathscr{L}_{X_{s}^{\delta,\epsilon}},\delta_{0})^{2})ds \Big)\\
						\leq&4|x|^{2}+C_{T}\mathbb{E}\Big(\sup_{t\in[0,T]}\int_{0}^{t}\Big[a(1+|X_{s}^{\delta,\epsilon,h^{\delta}}|^{2}+|X_{s}^{\delta,\epsilon}|^{2}+|Y_{s}^{\delta,\epsilon,h^{\delta}}|^{2})+1\Big]ds\Big)\\
						&+C_{T,H,M,\delta}(1+\mathbb{E}(\sup_{t\in[0,T]}|X_{t}^{\delta,\epsilon}|^{2}))\\
					\end{aligned}
				\end{equation}
			\begin{equation}	
				\begin{aligned}
				\leq&4|x|^{2}+C_{T,a}\mathbb{E}\Big(\sup_{t\in[0,T]}\int_{0}^{t}(|X_{s}^{\delta,\epsilon,h^{\delta}}|^{2}+|Y_{s}^{\delta,\epsilon,h^{\delta}}|^{2})ds\Big)\\
						&+C_{T,H,M,\delta,a}(1+\mathbb{E}(\sup_{t\in[0,T]}|X_{t}^{\delta,\epsilon}|^{2}))\\
						\leq&4|x|^{2}+C_{T,a}\int_{0}^{T}\mathbb{E}\Big(\sup_{r\in[0,s]}(|X_{r}^{\delta,\epsilon,h^{\delta}}|^{2}+|Y_{r}^{\delta,\epsilon,h^{\delta}}|^{2})\Big)ds\\
						&+C_{T,H,M,\delta,a}(1+\mathbb{E}(\sup_{t\in[0,T]}|X_{t}^{\delta,\epsilon}|^{2})).\\
					\end{aligned}
				\end{equation}
				Now, we first need to estimate the term $\mathbb{E}(\sup_{t\in[0,T]}|X_{t}^{\delta,\epsilon}|^{2}).$ Using H\"{o}lder inequality, assumptions $\mathbf{(H1)}$ and Lemma \ref{sec2-lemma2.1}, we can get
				\begin{equation*}
					\begin{aligned}
						\mathbb{E}\Big(\sup_{t\in[0,T]}|X_{t}^{\delta,\epsilon}|^{2}\Big)&=\mathbb{E}\Big(\sup_{t\in[0,T]}\Big|x+\int_{0}^{t}b(s,X_{s}^{\delta,\epsilon},\mathscr{L}_{X_{s}^{\delta,\epsilon}},Y_{s}^{\delta,\epsilon})ds+\delta^{H}\int_{0}^{t}\sigma(s,\mathscr{L}_{X_{s}^{\delta,\epsilon}})dB_{s}^{H}\Big|^{2}\Big)\\
						&\leq3|x|^{2}+T\mathbb{E}\Big(\sup_{t\in[0,T]}\int_{0}^{t}|b(s,X_{s}^{\delta,\epsilon},\mathscr{L}_{X_{s}^{\delta,\epsilon}},Y_{s}^{\delta,\epsilon})|^{2}\Big)ds\\
						&\quad+3\delta^{2H}\mathbb{E}\Big(\sup_{t\in[0,T]}|\int_{0}^{t}\sigma(s,\mathscr{L}_{X_{s}^{\delta,\epsilon}})dB_{s}^{H}|^{2}\Big)\\
						&\leq3|x|^{2}+C_{T}\mathbb{E}\Big(\sup_{t\in[0,T]}\int_{0}^{t}K(s^{2})(\kappa(|X_{s}^{\delta,\epsilon}|^{2}+|Y_{s}^{\delta,\epsilon}|^{2}+\mathbb{W}_{2}(\mathscr{L}_{X_{s}^{\delta,\epsilon}},\delta_{0})^{2})+1)ds\Big)\\
						&\quad+3\delta^{2H}\mathbb{E}\Big(\sup_{t\in[0,T]}|\int_{0}^{t}\sigma(s,\mathscr{L}_{X_{s}^{\delta,\epsilon}})dB_{s}^{H}|^{2}\Big)\\
						&\leq C_{T,|x|,\delta,H,a}+C_{T,H,a,\delta}\int_{0}^{T}\mathbb{E}(\sup_{r\in[0,s]}|X_{r}^{\delta,\epsilon}|^{2})ds+C_{T,a}\int_{0}^{T}\mathbb{E}(\sup_{r\in[0,s]}|Y_{r}^{\delta,\epsilon}|^{2})ds.
					\end{aligned}
				\end{equation*}	
				Next, we  get the estimate of $\int_{0}^{T}\mathbb{E}(\sup_{r\in[0,s]}|Y_{r}^{\delta,\epsilon}|^{2})ds.$ Applying It\^{o} formula and assumptions $\mathbf{(H2)}$, we can get
				\begin{equation*}
					\begin{aligned}
						\frac{d}{dr}\mathbb{E}\Big(\sup_{r\in[0,s]}|Y_{r}^{\delta,\epsilon}|^{2}\Big)&=\frac{2}{\epsilon}\mathbb{E}\Big(\sup_{r\in[0,s]}\langle f(r,X_{r}^{\delta,\epsilon},\mathscr{L}_{X_{r}^{\delta,\epsilon}}, Y_{r}^{\delta,\epsilon}),Y_{r}^{\delta,\epsilon}\rangle\Big)+\frac{1}{\epsilon}\mathbb{E}\Big(\sup_{r\in[0,s]}\|g(r,X_{r}^{\delta,\epsilon},\mathscr{L}_{X_{r}^{\delta,\epsilon}},Y_{r}^{\delta,\epsilon})\|^{2}\Big)\\
						&\leq -\frac{\beta_{2}}{\epsilon}\mathbb{E}(\sup_{r\in[0,s]}|Y_{r}^{\delta,\epsilon}|^{2})+\frac{C}{\epsilon}\Big(1+\mathbb{E}(\sup_{r\in[0,s]}|X_{r}^{\delta,\epsilon}|^{2})\Big).
					\end{aligned}
				\end{equation*}
				
				By the comparison theorem, we have
				
				\begin{equation*}
					\begin{aligned}
						\int_{0}^{T}\mathbb{E}\Big(\sup_{r\in[0,s]}|Y_{r}^{\delta,\epsilon}|^{2}\Big)ds&\leq|y|^{2}e^{-\frac{\beta_{2}T}{\epsilon}}+\frac{C}{\epsilon}\int_{0}^{T}e^{-\frac{\beta_{2}(T-s)}{\epsilon}}(\mathbb{E}(\sup_{r\in[0,s]}|X_{r}^{\delta,\epsilon}|^{2})+1)ds.\\
					\end{aligned}
				\end{equation*}
				Then,
				\begin{equation*}
					\begin{aligned}
						\mathbb{E}\Big(\sup_{t\in[0,T]}|X_{t}^{\delta,\epsilon}|^{2}\Big)\leq C_{T,|x|,|y|,a,\beta_{2},\epsilon}+C_{T,H,a,\delta}(1+\frac{1}{\epsilon})\int_{0}^{T}\mathbb{E}(\sup_{r\in[0,s]}|X_{r}^{\delta,\epsilon}|^{2})ds.
					\end{aligned}
				\end{equation*}
				Applying Gronwall's inequality, it is easy to get
				$$\mathbb{E}\Big(\sup_{t\in[0,T]}|X_{t}^{\delta,\epsilon}|^{2}\Big)\leq C_{T,H,|x|,|y|,a,\beta_{2},\delta,\epsilon}.$$
				Recall the equation of $Y_{t}^{\delta,\epsilon,h^{\delta}}$,
				\begin{equation*}
					\begin{aligned}
						dY_{t}^{\delta,\epsilon,h^{\delta}}
						=&\frac{1}{\epsilon}f(t,X_{t}^{\delta,\epsilon,h^{\delta}},\mathscr{L}_{X_{t}^{\delta,\epsilon}},Y_{t}^{\delta,\epsilon,h^{\delta}})dt+\frac{1}{\sqrt{\delta\epsilon}}g(t,X_{t}^{\delta,\epsilon,h^{\delta}},\mathscr{L}_{X_{t}^{\delta,\epsilon}},Y_{t}^{\delta,\epsilon,h^{\delta}})\dot{v}_{t}^{\delta}dt\\
						&+\frac{1}{\sqrt{\epsilon}}g(t,X_{t}^{\delta,\epsilon,h^{\delta}},\mathscr{L}_{X_{t}^{\delta,\epsilon}},Y_{t}^{\delta,\epsilon,h^{\delta}})dW_{t}.
					\end{aligned}
				\end{equation*}
				Then we can obtain
				\begin{equation*}\label{sec4-lemma4.2-4}
					\begin{aligned}
						\frac{d}{dt}\mathbb{E}\Big|Y_{t}^{\delta,\epsilon,h^{\delta}}\Big|^{2}\leq&
						\frac{2}{\epsilon}\mathbb{E}\Big(\langle f(t,X_{t}^{\delta,\epsilon,h^{\delta}},\mathscr{L}_{X_{t}^{\delta,\epsilon}},Y_{t}^{\delta,\epsilon,h^{\delta}}),Y_{t}^{\delta,\epsilon,h^{\delta}}\rangle\Big)+\frac{1}{\epsilon}\mathbb{E}\Big\|g(t,X_{t}^{\delta,\epsilon,h^{\delta}},\mathscr{L}_{X_{t}^{\delta,\epsilon}},Y_{t}^{\delta,\epsilon,h^{\delta}})\Big\|^{2}\\
						&+\frac{2}{\sqrt{\delta\epsilon}}\mathbb{E}\Big(\langle g (t,X_{t}^{\delta,\epsilon,h^{\delta}},\mathscr{L}_{X_{t}^{\delta,\epsilon}},Y_{t}^{\delta,\epsilon,h^{\delta}})\dot{v}_{t}^{\delta},Y_{t}^{\delta,\epsilon,h^{\delta}}\rangle\Big).
					\end{aligned}
				\end{equation*}
				By the assumptions in Theorem \ref{sec3-th3.5}, it can obtain that
				\begin{equation}\label{sec4-lemma4.2-5}
					\begin{aligned}
						&\frac{2}{\sqrt{\delta\epsilon}}\mathbb{E}\Big(\langle g (t,X_{t}^{\delta,\epsilon,h^{\delta}},\mathscr{L}_{X_{t}^{\delta,\epsilon}},Y_{t}^{\delta,\epsilon,h^{\delta}})\dot{v}_{t}^{\delta},Y_{t}^{\delta,\epsilon,h^{\delta}}\rangle\Big)\\
						&\leq
						\frac{C_{T,a}}{\sqrt{\delta\epsilon}}\mathbb{E}\Big((1+|X_{t}^{\delta,\epsilon,h^{\delta}}|+\sqrt{\mathscr{L}_{X_{t}^{\delta,\epsilon}}(|\cdot|^{2})})|\dot{v}_{t}^{\delta}|
						\cdot |Y_{t}^{\delta,\epsilon,h^{\delta}}|\Big)\\
						&\leq\frac{C_{T,a,\tilde{\beta}}}{\delta}\mathbb{E}\Big((1+|X_{t}^{\delta,\epsilon,h^{\delta}}|^{2}+\mathscr{L}_{X_{t}^{\delta,\epsilon}}(|\cdot|^{2}))|\dot{v}_{t}^{\delta}|^{2}\Big)
						+\frac{\tilde{\beta}}{\epsilon}\mathbb{E} |Y_{t}^{\delta,\epsilon,h^{\delta}}|^{2},
					\end{aligned}
				\end{equation}
				where $\tilde{\beta}\in(0,\beta_{2})$. By the assumption $\mathbf{(H2)}$, we have
				\begin{equation*}\label{sec4-lemma4.2-6}
					\begin{aligned}
						\frac{d}{dt}\mathbb{E}\Big|Y_{t}^{\delta,\epsilon,h^{\delta}}\Big|^{2}\leq&
						-\frac{k_{1}}{\epsilon}\mathbb{E}|Y_{t}^{\delta,\epsilon,h^{\delta}}|^{2}+\frac{C_{T,a}}{\epsilon}(1+\mathbb{E}|X_{t}^{\delta,\epsilon,h^{\delta}}|^{2}+\mathscr{L}_{X_{t}^{\delta,\epsilon}}(|\cdot|^{2}))\\
						&+\frac{C_{T,a,\tilde{\beta}}}{\delta}\mathbb{E}\Big((1+|X_{t}^{\delta,\epsilon,h^{\delta}}|^{2}+\mathscr{L}_{X_{t}^{\delta,\epsilon}}(|\cdot|^{2}))|\dot{v}_{t}^{\delta}|^{2}\Big),
					\end{aligned}
				\end{equation*}
				where $k_{1}:=\beta_{2}-\tilde{\beta}>0$.
				Applying the comparison theorem, it follows that
				\begin{equation}\label{sec4-lemma4.2-7}
					\begin{aligned}
						\mathbb{E}|Y_{t}^{\delta,\epsilon,h^{\delta}}|^{2}\leq&
						e^{-\frac{k_{1}}{\epsilon}t}|y|^{2}+\frac{C_{T,a}}{\epsilon}\int_{0}^{t}e^{-\frac{k_{1}}{\epsilon}(t-s)}(1+\mathbb{E}|X_{s}^{\delta,\epsilon,h^{\delta}}|^{2}+\mathscr{L}_{X_{s}^{\delta,\epsilon}}(|\cdot|^{2}))ds\\
						&+\frac{C_{T,a,\tilde{\beta}}}{\delta}\int_{0}^{t}e^{-\frac{k_{1}}{\epsilon}(t-s)}\mathbb{E}\Big((1+|X_{s}^{\delta,\epsilon,h^{\delta}}|^{2}+\mathscr{L}_{X_{s}^{\delta,\epsilon}}(|\cdot|^{2}))|\dot{v}_{s}^{\delta}|^{2}\Big)ds.
					\end{aligned}
				\end{equation}
				Then, integrating (\ref{sec4-lemma4.2-7}) with respect to $t$ from 0 to $T$,
				we can have
				\begin{equation}\label{sec4-lemma4.2-8}
					\begin{aligned}
						&\int_{0}^{T}\mathbb{E}|Y_{t}^{\delta,\epsilon,h^{\delta}}|^{2}dt\\
						&\leq C_{T}(1+|y|^{2})+C_{T,a,k_{1}}\int_{0}^{T}\mathbb{E}|X_{t}^{\delta,\epsilon,h^{\delta}}|^{2}
						dt+C_{T,a,k_{1}}\int_{0}^{T}\mathbb{E}|X_{t}^{\delta,\epsilon}|^{2}dt\\
						&\quad+C_{T,a,M,k_{1},\beta_{2}}(\frac{\epsilon}{\delta})\Big(1+\mathbb{E}(\sup_{t\in[0,T]}|X_{t}^{\delta,\epsilon,h^{\delta}}|^{2})+\mathbb{E}(\sup_{t\in[0,T]}|X_{t}^{\delta,\epsilon}|^{2})\Big).
					\end{aligned}
				\end{equation}
				Thus, combining (\ref{sec4-lemma4.2-3}) to (\ref{sec4-lemma4.2-8}), we can have
				\begin{equation*}\label{sec4-lemma4.2-9}
					\begin{aligned}
						\mathbb{E}\Big(\sup_{t\in[0,T]}|X_{t}^{\delta,\epsilon,h^{\delta}}|^{2}\Big)\leq& 4|x|^{2}+C_{T,a}\int_{0}^{T}\mathbb{E}\Big(\sup_{s\in[0,t]}|X_{s}^{\delta,\epsilon,h^{\delta}}|^{2}dt\Big)+C_{T,H,M,\delta,a}\Big(1+\mathbb{E}(\sup_{t\in[0,T]}|X_{t}^{\delta,\epsilon}|^{2})\Big)\\
						&+C_{T}(1+|y|^{2})+C_{T,a,k_{1}}\int_{0}^{T}\mathbb{E}(\sup_{s\in[0,t]}|X_{s}^{\delta,\epsilon,h^{\delta}}|^{2})dt\\
						&		+C_{T,a,k_{1}}\int_{0}^{T}\mathbb{E}(\sup_{s\in[0,t]}|X_{s}^{\delta,\epsilon}|^{2})dt\\
						&+C_{T,a,M,k_{1},\beta_{2}}(\frac{\epsilon}{\delta})\Big(1+\mathbb{E}(\sup_{t\in[0,T]}|X_{t}^{\delta,\epsilon,h^{\delta}}|^{2})+\sup_{t\in[0,T]}\mathbb{E}|X_{t}^{\delta,\epsilon}|^{2}\Big).
					\end{aligned}
				\end{equation*}
				
				Thus,
				with the condition $\lim_{\delta\rightarrow0}\frac{\epsilon}{\delta}=0$, we can choose $\frac{\epsilon}{\delta}\in(0,\frac{1}{2C_{T,a,M,k_{1},\beta_{2}}})$ such that
				\begin{equation*}\label{sec4-lemma4.2-10}
					\mathbb{E}\Big(\sup_{t\in[0,T]}|X_{t}^{\delta,\epsilon,h^{\delta}}|^{2}\Big)\leq C_{T,H,a,M, \beta_{2}}(1+|x|^{2}+|y|^{2})+C_{T,a}\int_{0}^{T}\mathbb{E}(\sup_{s\in[0,t]}|X_{s}^{\delta,\epsilon,h^{\delta}}|^{2})dt.
				\end{equation*}
				Using Gronwall's inequality, we can get the desired estimate (\ref{sec4-lemma4.2-1}). By the boundness of $\mathbb{E}(\sup_{t\in[0,T]}|X_{t}^{\delta,\epsilon}|^{2})$ and estimate (\ref{sec4-lemma4.2-1}), one can easily obtain (\ref{sec4-lemma4.2-2}).
				This completes the proof.
			\end{proof}

			\begin{lemma}\label{sec4-lemma4.3}
				Under the assumptions in Theorem \ref{sec3-th3.5}, {for any $h^{\delta}\in \mathcal{A}_{M}$, }there exists a constant $C_{T,H,a,M.|x|,|y|,\delta,\beta_{2}}$ such that for any $\Delta\in(0,1)$ and $t\in[0,T]$,
				\begin{equation}\label{sec4-lemma4.3.1}
					\mathbb{E}|X_{t}^{\delta,\epsilon}-X_{t(\Delta)}^{\delta,\epsilon}|^{2}\leq C_{T,H,a,M,|x|,|y|,\delta,\beta_{2}}(\Delta^{2}\vee\Delta^{2H})
				\end{equation}
				and
				\begin{equation}\label{sec4-lemma4.3-2}
					\mathbb{E}\Big[\int_{0}^{T}|X_{t}^{\delta,\epsilon,h^{\delta}}-X_{t(\Delta)}^{\delta,\epsilon,h^{\delta}}|^{2}dt\Big]\leq C_{T,H,a,M,|x|,|y|,\delta,\beta_{2}}(\Delta^{2}\vee\Delta^{2H}),
				\end{equation}
				where $t(\Delta):=[\frac{t}{\Delta}]\Delta$ and $[s]$ denotes the integer part of $s$.
			\end{lemma}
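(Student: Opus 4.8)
The plan is to write each time increment as the sum of its drift, control and fractional-noise contributions over the short interval $[t(\Delta),t]$, estimate the three pieces separately, and combine. Throughout I will use the growth bounds extracted from $\mathbf{(H1)}$ together with the concavity estimate $\kappa(u)\le a(1+u)$: taking $t_2=t_1$, $x_2=0$, $y_2=0$, $\mu_2=\delta_0$ in $\mathbf{(H1)}$ and recalling $K(0)=1$ gives $|b(s,x,\mu,y)|^2\le C(1+|x|^2+|y|^2+\mu(|\cdot|^2))$ and $\|\sigma(s,\mu)\|^2\le C(1+\mu(|\cdot|^2))$. Since $\mathscr{L}_{X^{\delta,\epsilon}_s}$ is a deterministic measure, the map $s\mapsto\sigma(s,\mathscr{L}_{X^{\delta,\epsilon}_s})$ is a deterministic function of time, and by the a priori bound $\sup_{t\in[0,T]}\mathbb{E}|X^{\delta,\epsilon}_t|^2\le C$ established in the proof of Lemma \ref{sec4-lemma4.2} it is bounded; the same bound also yields $\mathbb{W}_2(\mathscr{L}_{X^{\delta,\epsilon}_s},\delta_0)^2=\mathbb{E}|X^{\delta,\epsilon}_s|^2\le C$.

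For the first inequality \eqref{sec4-lemma4.3.1}, I would write $X^{\delta,\epsilon}_t-X^{\delta,\epsilon}_{t(\Delta)}=\int_{t(\Delta)}^t b\,ds+\delta^H\int_{t(\Delta)}^t\sigma\,dB^H_s$. For the drift, Cauchy--Schwarz gives $\mathbb{E}|\int_{t(\Delta)}^t b\,ds|^2\le\Delta\int_{t(\Delta)}^t\mathbb{E}|b|^2\,ds$, and since both $\mathbb{E}|X^{\delta,\epsilon}_s|^2$ and $\mathbb{E}|Y^{\delta,\epsilon}_s|^2$ are bounded uniformly in $s\in[0,T]$ (the latter follows from $\mathbf{(H2)}$ and the comparison argument used in Lemma \ref{sec4-lemma4.2}), this is $\le C\Delta^2$. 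For the fractional term I would use that the integrand is deterministic, so the integral is a Wiener integral whose second moment equals the $\mathcal{H}_H$-norm of $\sigma(\cdot,\mathscr{L}_{X^{\delta,\epsilon}_\cdot})\mathbf{1}_{[t(\Delta),t]}$; restricting the bound $\|f\|^2_{\mathcal{H}_H}\le 2HT^{2H-1}\|f\|^2_{L^2}$ to an interval of length $\le\Delta$ gives $\mathbb{E}|\int_{t(\Delta)}^t\sigma\,dB^H_s|^2\le 2H\Delta^{2H-1}\int_{t(\Delta)}^t\|\sigma\|^2\,ds\le C\Delta^{2H}$. Adding the two contributions yields $C(\Delta^2\vee\Delta^{2H})$.

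For the second inequality \eqref{sec4-lemma4.3-2} there is an extra control term $\int_{t(\Delta)}^t\sigma\dot u^\delta_s\,ds$, and, crucially, $\mathbb{E}|Y^{\delta,\epsilon,h^\delta}_t|^2$ is no longer bounded pointwise in $t$, only integrable over $[0,T]$ (see \eqref{sec4-lemma4.2-7}--\eqref{sec4-lemma4.2-8}, where the control contributes a term involving $|\dot v^\delta_s|^2$ that is merely $L^1$ in time). This is exactly why the statement is phrased as a time integral: I would estimate $\int_0^T\mathbb{E}|X^{\delta,\epsilon,h^\delta}_t-X^{\delta,\epsilon,h^\delta}_{t(\Delta)}|^2\,dt$ and, after bounding each piece by $\Delta\int_{t(\Delta)}^t(\cdots)\,ds$, interchange the order of integration. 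The key observation is that for fixed $s$ the set $\{t\in[0,T]:t(\Delta)\le s\le t\}$ has Lebesgue measure at most $\Delta$, so Fubini converts $\int_0^T\int_{t(\Delta)}^t(\cdots)_s\,ds\,dt$ into $\le\Delta\int_0^T(\cdots)_s\,ds$. Applied to the drift this gives $C\Delta^2(1+T\sup_s\mathbb{E}|X^{\delta,\epsilon,h^\delta}_s|^2+\int_0^T\mathbb{E}|Y^{\delta,\epsilon,h^\delta}_s|^2\,ds)\le C\Delta^2$ by \eqref{sec4-lemma4.2-1}--\eqref{sec4-lemma4.2-2}; applied to the control term, using $\|\sigma\|^2\le C$ and the a.s. bound $\int_0^T|\dot u^\delta_s|^2\,ds=\|\dot{K}_{H}\hat u^\delta\|^2_{L^2}\le C_H\|\hat u^\delta\|^2_{L^2}\le 2C_H M$ coming from $h^\delta\in\mathcal{A}_M$, it gives $C\Delta^2$; applied to the fractional term, the deterministic Wiener-integral bound together with Fubini gives $C\Delta^{2H}$. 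Summing yields $C(\Delta^2\vee\Delta^{2H})$.

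The main obstacle will be the fractional-noise piece: obtaining the sharp exponent $\Delta^{2H}$ rather than a weaker power requires exploiting the determinism of $s\mapsto\sigma(s,\mathscr{L}_{X^{\delta,\epsilon}_s})$ to identify the increment as a Wiener integral and then using the localized $\mathcal{H}_H$-norm bound on the subinterval. A secondary delicate point, specific to \eqref{sec4-lemma4.3-2}, is that one cannot bound $\mathbb{E}|Y^{\delta,\epsilon,h^\delta}_t|^2$ uniformly in $t$, so the occupation-time/Fubini device above is essential to absorb the merely $L^1$-in-time control contribution into the $\Delta^2$ rate.
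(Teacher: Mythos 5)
Your proof is correct, and its core ingredients coincide with the paper's: decompose the increment over $[t(\Delta),t]$ into drift, control and fractional-noise parts; treat the drift and control by Cauchy--Schwarz together with the moment bounds \eqref{sec4-lemma4.2-1}--\eqref{sec4-lemma4.2-2} of Lemma \ref{sec4-lemma4.2}; treat the noise by exploiting that $s\mapsto\sigma(s,\mathscr{L}_{X_s^{\delta,\epsilon}})$ is deterministic, so that the localized second-moment bound with prefactor $\Delta^{2H-1}$ applies (the paper invokes Lemma \ref{sec2-lemma2.1} in this spirit); and use a Fubini-type interchange to absorb the quantities that are only $L^1$ in time. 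Where you genuinely differ is in the bookkeeping for \eqref{sec4-lemma4.3-2}: the paper first splits $\int_0^T=\int_0^\Delta+\int_\Delta^T$, bounds the $[0,\Delta]$ piece crudely by $C(1+|x|^2+|y|^2)\Delta$ using only the sup-moment bound, and on $[\Delta,T]$ compares $X_t^{\delta,\epsilon,h^{\delta}}$ with $X_{t-\Delta}^{\delta,\epsilon,h^{\delta}}$ and then $X_{t(\Delta)}^{\delta,\epsilon,h^{\delta}}$ with $X_{t-\Delta}^{\delta,\epsilon,h^{\delta}}$ via a triangle inequality; you instead work directly with $X_{t(\Delta)}^{\delta,\epsilon,h^{\delta}}$ and use the occupation-time observation that $|\{t\in[0,T]:t(\Delta)\le s\le t\}|\le\Delta$ for each fixed $s$. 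Your route is not only cleaner but tighter: since $2H>1$ and $\Delta<1$, the paper's bound $C\Delta$ on the $[0,\Delta]$ piece exceeds the claimed rate $\Delta^2\vee\Delta^{2H}=\Delta^{2H}$, so the paper's displayed argument strictly yields only $O(\Delta)$ there, whereas your direct treatment of $t\in[0,\Delta)$ (where $t(\Delta)=0$) recovers the stated rate; this blemish is harmless downstream, where only $\kappa(\Delta^2\vee\Delta^{2H})\to0$ along $\Delta=\delta^{1/2}$ is used, but your version proves the lemma exactly as stated. One small imprecision on your side: $\mathbb{E}|Y_t^{\delta,\epsilon,h^{\delta}}|^2$ \emph{can} be bounded pointwise in $t$ from \eqref{sec4-lemma4.2-7}, only with a constant of order $1/\delta$ (which the lemma's $\delta$-dependent constant would in fact tolerate); what is true is that only the time-integrated bound \eqref{sec4-lemma4.2-2} holds with a constant uniform in $\delta,\epsilon$, and your Fubini device is the right way to use it, exactly as the paper does in its estimate of the drift term.
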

			
			\begin{proof}
				We first give the   estimate for  (\ref{sec4-lemma4.3.1}).
				\begin{equation*}
					\begin{aligned}
						&\mathbb{E}|X_{t}^{\delta,\epsilon}-X^{\delta,\epsilon}_{t(\Delta)}|^{2}\\
						=&\mathbb{E}\Big|\int_{t(\Delta)}^{t}b(s,X_{s}^{\delta,\epsilon},\mathscr{L}_{X_{s}^{\delta,\epsilon}},Y_{s}^{\delta,\epsilon})ds+\int_{t(\Delta)}^{t}\sigma(s,\mathscr{L}_{X_{s}^{\delta,\epsilon}})dB_{s}^{H}\Big|^{2}\\
						\leq& C\mathbb{E}\Big(\Big|\int_{t(\Delta)}^{t}b(s,X_{s}^{\delta,\epsilon},\mathscr{L}_{X_{s}^{\delta,\epsilon}},Y_{s}^{\delta,\epsilon})ds\Big|^{2}+\Big|\int_{t(\Delta)}^{t}\sigma(s,\mathscr{L}_{X_{s}^{\delta,\epsilon}})dB_{s}^{H}\Big|^{2}\Big)\\
						\leq& C_{T}\Delta\int_{t(\Delta)}^{t}K(s^{2})(\kappa(2\mathbb{E}|X_{s}^{\delta,\epsilon}|^{2}+\mathbb{E}|Y_{s}^{\delta,\epsilon}|^{2})+1)ds+C_{T}\Delta^{2H-1}\int_{t(\Delta)}^{t}K(s^{2})(\kappa(\mathbb{E}|X_{s}^{\delta,\epsilon}|^{2})+1)ds\\
						\leq&C_{T,H,a,M,|x|,|y|,\beta_{2}}(\Delta^{2}\vee \Delta^{2H}).
					\end{aligned}
				\end{equation*}
				
				Now, we give the estimate for (\ref{sec4-lemma4.3-2}). 
				\begin{equation}\label{sec4-lemma4.3-3}
					\begin{aligned}
						\mathbb{E}&\Big(\int_{0}^{T}|X_{t}^{\delta,\epsilon,h^{\delta}}-X_{t(\Delta)}^{\delta,\epsilon,h^{\delta}}|^{2}dt\Big)\\
						=&\mathbb{E}\Big(\int_{0}^{\Delta}|X_{t}^{\delta,\epsilon,h^{\delta}}-x|^{2}dt\Big)+\mathbb{E}\Big(\int_{\Delta}^{T}|X_{t}^{\delta,\epsilon,h^{\delta}}-X_{t(\Delta)}^{\delta,\epsilon,h^{\delta}}|^{2}dt\Big)\\
						\leq&C_{T,H,a,M,\beta_{2}}(1+|x|^{2}+|y|^{2})\Delta+2\mathbb{E}\Big(\int_{\Delta}^{T}|X_{t}^{\delta,\epsilon,h^{\delta}}-X_{t-\Delta}^{\delta,\epsilon,h^{\delta}}|^{2}dt\Big)\\
						&+2\mathbb{E}\Big(\int_{\Delta}^{T}|X_{t(\Delta)}^{\delta,\epsilon,h^{\delta}}-X_{t-\Delta}^{\delta,\epsilon,h^{\delta}}|^{2}dt\Big).\\
					\end{aligned}
				\end{equation}
				For the second term on the right-hand side of (\ref{sec4-lemma4.3-3}), by the basic inequality, we have
				\begin{equation*}\label{sec4-lemma4.3-4}
					\begin{aligned}
						\mathbb{E}&\Big(\int_{\Delta}^{T}|X_{t}^{\delta,\epsilon,h^{\delta}}-X_{t-\Delta}^{\delta,\epsilon,h^{\delta}}|^{2}dt\Big)\\
						\leq&3\mathbb{E}\Big|\int_{\Delta}^{T}\int_{t-\Delta}^{t}b(s,X_{s}^{\delta,\epsilon,h^{\delta}},\mathscr{L}_{X_{s}^{\delta,\epsilon}},Y_{s}^{\delta,\epsilon,h^{\delta}})dsdt\Big|^{2}\\
						&+3\mathbb{E}\Big|\int_{\Delta}^{T}\int_{t-\Delta}^{t}\sigma(s,\mathscr{L}_{X_{s}^{\delta,\epsilon}})\dot{u}_{s}^{\delta}dsdt\Big|^{2}\\
						&		+3\delta^{2H}\mathbb{E}\Big|\int_{\Delta}^{T}\int_{t-\Delta}^{t}\sigma(s,\mathscr{L}_{X_{s}^{\delta,\epsilon}})dB_{s}^{H}dt\Big|^{2}\\
						=&:O_{1}(t)+O_{2}(t)+O_{3}(t).
					\end{aligned}
				\end{equation*}
				Next, we will estimate the terms $O_{i}(t),i=1,2,3$, respectively.
				
				For the term $O_{1}(t)$, using assumptions $\mathbf{(H1)}$, it follows that
				\begin{equation*}\label{sec4-lemma4.3-5}
					\begin{aligned}
						O_{1}(t)&\leq C_{T}\Delta\int_{\Delta}^{T}\int_{t-\Delta}^{t}|b(s,X_{s}^{\delta,\epsilon,h^{\delta}},\mathscr{L}_{X_{s}^{\delta,\epsilon}},Y_{s}^{\delta,\epsilon,h^{\delta}})|^{2}dsdt\\
						&\leq C_{T}\Delta\int_{\Delta}^{T}\int_{t-\Delta}^{t}K(s^{2})\Big(a(1+|X_{s}^{\delta,\epsilon,h^{\delta}}|^{2}+|X_{s}^{\delta,\epsilon}|^{2}+|Y_{s}^{\delta,\epsilon,h^{\delta}}|^{2})+1\Big)dsdt\\
						&\leq C_{T,H,a,M,|x|,|y|, \beta_{2}}\Delta^{2}.
					\end{aligned}
				\end{equation*}
				
				For the terms $O_{2}(t)$ and $O_{3}(t)$, using Lemma \ref{sec2-lemma2.1}, it is easy to get
				\begin{equation*}\label{sec4-lemma4.3-6}
					\begin{aligned}
						O_{2}(t)+O_{3}(t)&
						\leq3\mathbb{E}\Big|\int_{\Delta}^{T}\int_{t-\Delta}^{t}\sigma(s,\mathscr{L}_{X_{s}^{\delta,\epsilon}})\dot{u}_{s}^{\delta}dsdt\Big|^{2}\\
						&		\quad+3T\delta^{2H}\Big|\int_{\Delta}^{T}\mathbb{E}\Big(\sup_{r\in[0,t]}\int_{r-\Delta}^{r}\sigma(s,\mathscr{L}_{X_{s}^{\delta,\epsilon}})dB_{s}^{H}\Big)^{2}dt\Big|\\
						&\leq
						3T\Delta\int_{\Delta}^{T}\int_{t-\Delta}^{t}\mathbb{E}\|\sigma(s,\mathscr{L}_{X_{s}^{\delta,\epsilon}})\|^{2}\cdot|\dot{u}_{s}^{\delta}|^{2}dsdt\\
						&\quad+3T\delta^{2H}\Delta^{2H-1}\int_{\Delta}^{T}\int_{t-\Delta}^{t}\|\sigma(s,\mathscr{L}_{X_{s}^{\delta,\epsilon}})\|^{2}dsdt\\
						&\leq C_{T,H,a,M,|x|,|y|,\delta,\beta_{2}}(\Delta^{2}\vee\Delta^{2H}).
					\end{aligned}
				\end{equation*}
				Hence, we  have
				\begin{equation}\label{sec4-lemma4.3-7}
					\mathbb{E}\Big(\int_{\Delta}^{T}	|X_{t}^{\delta,\epsilon,h^{\delta}}-X_{t-\Delta}^{\delta,\epsilon,h^{\delta}}|^{2}dt\Big)
					\leq C_{T,H,a,M,|x|,|y|,\delta,\beta_{2}}(\Delta^{2}\vee\Delta^{2H}).
				\end{equation}
				For the third term on the right-hand side of (\ref{sec4-lemma4.3-3}), by a similar argument  as (\ref{sec4-lemma4.3-7}), we can obtain
				\begin{equation}\label{sec4-lemma4.3-8}
					\begin{aligned}
						\mathbb{E}\Big(\int_{\Delta}^{T}	|X_{t(\Delta)}^{\delta,\epsilon,h^{\delta}}-X_{t-\Delta}^{\delta,\epsilon,h^{\delta}}|^{2}dt\Big)
						\leq &C_{T,H,a,M,|x|,|y|,\delta,\beta_{2}}(\Delta^{2}\vee\Delta^{2H}).
					\end{aligned}
				\end{equation}
				Thus, (\ref{sec4-lemma4.3-2}) can be derived from (\ref{sec4-lemma4.3-7}) and (\ref{sec4-lemma4.3-8}). This completes the proof.
			\end{proof}
			
			In the following discussion, in order to obtain the convergence of $X^{\delta,\epsilon,h^{\delta}}-\bar{X}^{h^{\delta}}$, we adopt the method of time discretization  techniques from Khasminskii \cite{Khasminskii}
			and introduce an auxiliary process $\bar{Y}_{t}^{\delta,\epsilon}\in\mathbb{R}^{m}$ with $\bar{Y}_{0}^{\delta,\epsilon}=Y_{0}^{\delta,\epsilon}=Y_{0}^{\delta,\epsilon,h^{\delta}}=y$ and for any $k\in\mathbb{N}$ and $t\in[k\Delta,\min\{(k+1)\Delta,T\}]$,
			\begin{equation*}\label{sec4-lemma4.4-0}
				\bar{Y}_{t}^{\delta,\epsilon}=\bar{Y}_{k\Delta}^{\delta,\epsilon}+\frac{1}{\epsilon}\int_{k\Delta}^{t}f(k\Delta,X_{k\Delta}^{\delta,\epsilon,h^{\delta}},\mathscr{L}_{X_{k\Delta}^{\delta,\epsilon}},\bar{Y}_{s}^{\delta,\epsilon})ds
				+	\frac{1}{\sqrt{\epsilon}}\int_{k\Delta}^{t}g(k\Delta, X_{k\Delta}^{\delta,\epsilon,h^{\delta}},\mathscr{L}_{X_{k\Delta}^{\delta,\epsilon}},\bar{Y}_{s}^{\delta,\epsilon})dW_s.
			\end{equation*}
			This can be rewritten as
			\begin{equation*}
				\bar{Y}_{t}^{\delta,\epsilon}=y+\frac{1}{\epsilon}\int_{0}^{t}f(s(\Delta),X_{s(\Delta)}^{\delta,\epsilon,h^{\delta}},\mathscr{L}_{X_{s(\Delta)}^{\delta,\epsilon}},\bar{Y}_{s}^{\delta,\epsilon})ds
				+	\frac{1}{\sqrt{\epsilon}}\int_{0}^{t}g(s(\Delta), X_{s(\Delta)}^{\delta,\epsilon,h^{\delta}},\mathscr{L}_{X_{s(\Delta)}^{\delta,\epsilon}},\bar{Y}_{s}^{\delta,\epsilon})dW_s.
			\end{equation*}
			
			Next, we aim to get the following error estimate between the process $Y^{\delta,\epsilon,h^{\delta}}$ and $\bar{Y}^{\delta,\epsilon}$.
			\begin{lemma}\label{sec4-lemma4.4}
				Under the assumptions in Theorem \ref{sec3-th3.5}, for any $M<\infty$, there exists a constant $C_{T,H,a,M,|x|,|y|,\beta_{1},\beta_{2}}>0$ such that
				\begin{equation*}\label{sec4-lemma4.3-1}
					\mathbb{E}\Big(\int_{0}^{T}|Y_{t}^{\delta,\epsilon,h^{\delta}}-\bar{Y}_{t}^{\delta,\epsilon}|^{2}dt\Big)\leq C_{T,H,a,M,|x|,|y|,\beta_{1},\beta_{2}}(\frac{\epsilon}{\delta}+\kappa(\Delta^{2}\vee\Delta^{2H})).
				\end{equation*}
			\end{lemma}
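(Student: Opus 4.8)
The plan is to apply Khasminskii's time-discretization technique to the difference process $Z_t := Y_t^{\delta,\epsilon,h^{\delta}} - \bar{Y}_t^{\delta,\epsilon}$, which satisfies $Z_0 = 0$. To lighten notation, for $\Phi \in \{f,g\}$ I abbreviate $\Phi(t) := \Phi(t,X_t^{\delta,\epsilon,h^{\delta}},\mathscr{L}_{X_t^{\delta,\epsilon}},Y_t^{\delta,\epsilon,h^{\delta}})$ and $\Phi(t(\Delta)) := \Phi(t(\Delta),X_{t(\Delta)}^{\delta,\epsilon,h^{\delta}},\mathscr{L}_{X_{t(\Delta)}^{\delta,\epsilon}},\bar{Y}_t^{\delta,\epsilon})$. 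Subtracting the defining equation of $\bar{Y}^{\delta,\epsilon}$ from the second line of Eq. \eqref{sec1-eq1.7}, the difference solves
$$dZ_t = \frac{1}{\epsilon}\big[f(t) - f(t(\Delta))\big]dt + \frac{1}{\sqrt{\delta\epsilon}}g(t)\dot{v}_t^{\delta}\,dt + \frac{1}{\sqrt{\epsilon}}\big[g(t) - g(t(\Delta))\big]dW_t.$$
Applying the It\^{o} formula to $|Z_t|^2$ and taking expectation (the stochastic integral being a martingale), I would obtain
$$\frac{d}{dt}\mathbb{E}|Z_t|^2 = \frac{1}{\epsilon}\mathbb{E}\Big[2\langle Z_t, f(t) - f(t(\Delta))\rangle + \|g(t) - g(t(\Delta))\|^2\Big] + \frac{2}{\sqrt{\delta\epsilon}}\mathbb{E}\langle Z_t, g(t)\dot{v}_t^{\delta}\rangle.$$

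The crucial step is to apply the first dissipativity inequality of $\mathbf{(H2)}$ with $y_1 = Y_t^{\delta,\epsilon,h^{\delta}}$ and $y_2 = \bar{Y}_t^{\delta,\epsilon}$, which bounds the bracketed quantity by $-\beta_1|Z_t|^2 + K(|t-t(\Delta)|^2)\kappa\big(|X_t^{\delta,\epsilon,h^{\delta}} - X_{t(\Delta)}^{\delta,\epsilon,h^{\delta}}|^2 + \mathbb{W}_2(\mathscr{L}_{X_t^{\delta,\epsilon}},\mathscr{L}_{X_{t(\Delta)}^{\delta,\epsilon}})^2\big)$. For the control term I would apply Young's inequality with a parameter $\tilde{\beta}\in(0,\beta_1)$ together with the uniform linear-growth bound \eqref{sec3-eq3.1} on $g$, giving
$$\frac{2}{\sqrt{\delta\epsilon}}\mathbb{E}\langle Z_t, g(t)\dot{v}_t^{\delta}\rangle \leq \frac{\tilde{\beta}}{\epsilon}\mathbb{E}|Z_t|^2 + \frac{C_{T,a,\tilde{\beta}}}{\delta}\mathbb{E}\Big[\big(1+|X_t^{\delta,\epsilon,h^{\delta}}|^2 + \mathscr{L}_{X_t^{\delta,\epsilon}}(|\cdot|^2)\big)|\dot{v}_t^{\delta}|^2\Big].$$
With $k_2 := \beta_1 - \tilde{\beta} > 0$, this produces a differential inequality of the form $\frac{d}{dt}\mathbb{E}|Z_t|^2 \leq -\frac{k_2}{\epsilon}\mathbb{E}|Z_t|^2 + \frac{1}{\epsilon}A_t + \frac{1}{\delta}B_t$, where $A_t$ carries the $\kappa$-term and $B_t$ the control contribution.

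By the comparison principle with $Z_0 = 0$, I would integrate to $\mathbb{E}|Z_t|^2 \leq \int_0^t e^{-\frac{k_2}{\epsilon}(t-s)}\big(\frac{1}{\epsilon}A_s + \frac{1}{\delta}B_s\big)ds$, then integrate over $t\in[0,T]$ and exchange the order of integration. The inner integral $\int_s^T e^{-\frac{k_2}{\epsilon}(t-s)}dt \leq \frac{\epsilon}{k_2}$ cancels the $\frac{1}{\epsilon}$ in front of $A_s$ and turns the $\frac{1}{\delta}$ in front of $B_s$ into $\frac{\epsilon}{k_2\delta}$. For the $A$-term I would use the concavity of $\kappa$ and Jensen's inequality (both in $\omega$ and in the time variable) to move the expectation and the time integral inside $\kappa$, and then invoke Lemma \ref{sec4-lemma4.3} — the bound on $\int_0^T\mathbb{E}|X_t^{\delta,\epsilon,h^{\delta}} - X_{t(\Delta)}^{\delta,\epsilon,h^{\delta}}|^2dt$ together with \eqref{sec4-lemma4.3.1} controlling the Wasserstein term — to bound it by $C\,\kappa(\Delta^2\vee\Delta^{2H})$. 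For the $B$-term, since $h^{\delta}\in\mathcal{A}_M$ gives $\int_0^T|\dot{v}_s^{\delta}|^2ds \leq 2M$ almost surely, I would factor out $1+\sup_{s}|X_s^{\delta,\epsilon,h^{\delta}}|^2 + \sup_s\mathbb{E}|X_s^{\delta,\epsilon}|^2$ and apply the second-moment bounds of Lemma \ref{sec4-lemma4.2}, which yields a factor $C_{T,H,a,M,\beta_2}(1+|x|^2+|y|^2)$ and leaves the prefactor $\frac{\epsilon}{\delta}$. Combining the two contributions gives the claimed estimate.

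The main obstacle I anticipate is the careful bookkeeping of the $\epsilon$- and $\delta$-scalings, ensuring that the dissipation rate $-k_2/\epsilon$ simultaneously absorbs the $1/\epsilon$ multiplying the $\kappa$-term and degrades the $1/\delta$ control term down to the order $\epsilon/\delta$; this is precisely where hypothesis \eqref{sec3-eq3.2}, $\lim_{\delta\to0}\epsilon/\delta = 0$, forces the control contribution to vanish. A secondary technical point is the Young split requiring $\tilde{\beta}<\beta_1$ so that the $|Z_t|^2$ coefficient stays strictly negative, and the passage of the time integral inside the concave $\kappa$ via Jensen, which relies on $\kappa(c\,r)\leq \max(1,c)\,\kappa(r)$ to absorb the normalizing $1/T$ into the constant.
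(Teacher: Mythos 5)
Your proposal is correct and follows essentially the same route as the paper's proof: It\^{o} formula applied to $|Y_{t}^{\delta,\epsilon,h^{\delta}}-\bar{Y}_{t}^{\delta,\epsilon}|^{2}$, the dissipativity condition $\mathbf{(H2)}$ for the drift/diffusion differences, a Young split for the $\frac{1}{\sqrt{\delta\epsilon}}g\,\dot{v}^{\delta}$ control term using \eqref{sec3-eq3.1}, the comparison theorem, Fubini with $\int_{s}^{T}e^{-k_{2}(t-s)/\epsilon}dt\leq\epsilon/k_{2}$, and finally Lemmas \ref{sec4-lemma4.2} and \ref{sec4-lemma4.3} together with concavity of $\kappa$. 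In fact your bookkeeping is the coherent version of the paper's: where you take $\tilde{\beta}\in(0,\beta_{1})$ and $k_{2}=\beta_{1}-\tilde{\beta}>0$, the paper writes $Q_{2}(t)\leq-\frac{\tilde{\beta}}{\epsilon}\mathbb{E}|Z_{t}|^{2}+\cdots$ with $k_{2}=\beta_{1}+\tilde{\beta}$, an apparent sign slip (Young's inequality produces a positive multiple of $\mathbb{E}|Z_{t}|^{2}$), which your formulation silently corrects without changing the final estimate.
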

			\begin{proof}
				The boundness of $|\bar{Y}_{t}^{\delta,\epsilon}|$ is easy to obtain and we just omit the details here.  Actually, the method of proof in this lemma is similar to that of Hong et al. (\cite{Hong2}, Lemma 5.8). For completeness of the proof, we provide the main proof framework here.
				From the definitions of the process $Y_{t}^{\delta,\epsilon,h^{\delta}}$ and $\bar{Y}_{t}^{\delta,\epsilon}$, we have the following equations
				\begin{equation}\label{sec4-lemma4.4-2}
					\left\{
					\begin{aligned}
						&d(Y_{t}^{\delta,\epsilon,h^{\delta}}-\bar{Y}_{t}^{\delta,\epsilon})=\frac{1}{\epsilon}
						\Big[f(t,X_{t}^{\delta,\epsilon,h^{\delta}},\mathscr{L}_{X_{t}^{\delta,\epsilon}},Y_{t}^{\delta,\epsilon,h^{\delta}})-f(t(\Delta),X_{t(\Delta)}^{\delta,\epsilon,h^{\delta}},\mathscr{L}_{X_{t(\Delta)}^{\delta,\epsilon}},\bar{Y}_{t}^{\delta,\epsilon})\Big]dt\\
						&\qquad\qquad\qquad\qquad+\frac{1}{\sqrt{\epsilon}}\Big[g(t,X_{t}^{\delta,\epsilon,h^{\delta}},\mathscr{L}_{X_{t}^{\delta,\epsilon}},Y_{t}^{\delta,\epsilon,h^{\delta}})-g(t(\Delta),X_{t(\Delta)}^{\delta,\epsilon,h^{\delta}},\mathscr{L}_{X_{t(\Delta)}^{\delta,\epsilon}},\bar{Y}_{t}^{\delta,\epsilon})\Big]dW_{t}\\
						&\qquad\qquad\qquad\qquad+\frac{1}{\sqrt{\delta\epsilon}}g(t,X_{t}^{\delta,\epsilon,h^{\delta}},\mathscr{L}_{X_{t}^{\delta,\epsilon}},Y_{t}^{\delta,\epsilon,h^{\delta}})\dot{v}_{t}^{\delta}dt,\\
						&Y_{0}^{\delta,\epsilon,h^{\delta}}-\bar{Y}_{0}^{\delta,\epsilon}=0.
					\end{aligned}
					\right.
				\end{equation}
				It follows from It\^{o} formula that
				\begin{equation}\label{sec4-lemma4.4-3}
					\begin{aligned}
						&\frac{d}{dt}\mathbb{E}\Big|Y_{t}^{\delta,\epsilon,h^{\delta}}-\bar{Y}_{t}^{\delta,\epsilon}
						\Big|^{2}\\
						&=\frac{2}{\epsilon}\mathbb{E}\Big\langle f(t,X_{t}^{\delta,\epsilon,h^{\delta}},\mathscr{L}_{X_{t}^{\delta,\epsilon}},Y_{t}^{\delta,\epsilon,h^{\delta}})-f(t(\Delta),X_{t(\Delta)}^{\delta,\epsilon,h^{\delta}},\mathscr{L}_{X_{t(\Delta)}^{\delta,\epsilon}},\bar{Y}_{t}^{\delta,\epsilon}),Y_{t}^{\delta,\epsilon,h^{\delta}}-\bar{Y}_{t}^{\delta,\epsilon}\Big\rangle\\
						&\quad+\frac{2}{\sqrt{\delta\epsilon}}\mathbb{E}\Big\langle g(t,X_{t}^{\delta,\epsilon,h^{\delta}},\mathscr{L}_{X_{t}^{\delta,\epsilon}},Y_{t}^{\delta,\epsilon,h^{\delta}})\dot{v}_{t}^{\delta},Y_{t}^{\delta,\epsilon,h^{\delta}}-\bar{Y}_{t}^{\delta,\epsilon}\Big\rangle\\
						&\quad+\frac{1}{\epsilon}\mathbb{E}\Big\|g(t,X_{t}^{\delta,\epsilon,h^{\delta}},\mathscr{L}_{X_{t}^{\delta,\epsilon}},Y_{t}^{\delta,\epsilon,h^{\delta}})-g(t(\Delta),X_{t(\Delta)}^{\delta,\epsilon,h^{\delta}},\mathscr{L}_{X_{t(\Delta)}^{\delta,\epsilon}},\bar{Y}_{t}^{\delta,\epsilon})\Big\|^{2}\\
						&=:Q_{1}(t)+Q_{2}(t)+Q_{3}(t).
					\end{aligned}
				\end{equation}
				
				For the terms $Q_{1}(t)$ and $Q_{3}(t)$, we can get
				\begin{equation}\label{sec4-lemma4.4-4}
					Q_{1}(t)+Q_{3}(t)\leq -\frac{\beta_{1}}{\epsilon}\mathbb{E}|Y_{t}^{\delta,\epsilon,h^{\delta}}-\bar{Y}_{t}^{\delta,\epsilon}|^{2}+\frac{C_{T,a}}{\epsilon}\kappa\Big(\mathbb{E}|X_{t}^{\delta,\epsilon,h^{\delta}}-X_{t(\Delta)}^{\delta,\epsilon,h^{\delta}}|^{2}+\mathbb{E}|X_{t}^{\delta,\epsilon}-X_{t(\Delta)}^{\delta,\epsilon}|^{2}\Big).
				\end{equation}
				
				For the term $Q_{2}(t)$, we can get
				\begin{equation}\label{sec4-lemma4.4-5}
					Q_{2}(t)\leq -\frac{\tilde{\beta}}{\epsilon}\mathbb{E}|Y_{t}^{\delta,\epsilon,h^{\delta}}-\bar{Y}_{t}^{\delta,\epsilon}|^{2}+\frac{C_{T,a}}{\delta}\mathbb{E}(1+|X_{t}^{\delta,\epsilon,h^{\delta}}|^{2}+\mathscr{L}_{X_{t}^{\delta,\epsilon}}(|\cdot|^{2}))\cdot|\dot{v}_{t}^{\delta}|^{2},
				\end{equation}
				where $\tilde{\beta}\in(0,\beta_{2})$ is the same defined in Lemma \ref{sec4-lemma4.2}.
				
				Substituting (\ref{sec4-lemma4.4-4}) and (\ref{sec4-lemma4.4-5}) into (\ref{sec4-lemma4.4-3}), it follows that
				\begin{equation*}\label{sec4-lemma4.4-6}
					\begin{aligned}
						\frac{d}{dt}\mathbb{E}\Big|Y_{t}^{\delta,\epsilon,h^{\delta}}-\bar{Y}_{t}^{\delta,\epsilon}\Big|^{2}\leq&-\frac{k_{2}}{\epsilon}\mathbb{E}|Y_{t}^{\delta,\epsilon,h^{\delta}}-\bar{Y}_{t}^{\delta,\epsilon}|^{2}+\frac{C_{T,a}}{\epsilon}\kappa\Big(\mathbb{E}|X_{t}^{\delta,\epsilon,h^{\delta}}-X_{t(\Delta)}^{\delta,\epsilon,h^{\delta}}|^{2}
						+\mathbb{E}|X_{t}^{\delta,\epsilon}-X_{t(\Delta)}^{\delta,\epsilon}|^{2}\Big)\\
						&+\frac{C_{T,a}}{\delta}\mathbb{E}(1+|X_{t}^{\delta,\epsilon,h^{\delta}}|^{2}+\mathscr{L}_{X_{t}^{\delta,\epsilon}}(|\cdot|^{2}))|\dot{v}_{t}^{\delta}|^{2},
					\end{aligned}
				\end{equation*}
				where $k_{2}:=\beta_{1}+\tilde{\beta.}$
				Using the comparison theorem, we can have
				\begin{equation*}\label{sec4-lemma4.4-7}
					\begin{aligned}
						\mathbb{E}\Big|Y_{t}^{\delta,\epsilon,h^{\delta}}-\bar{Y}_{t}^{\delta,\epsilon}\Big|^{2}
						\leq&\frac{C_{T,a}}{\epsilon}\int_{0}^{t}e^{-\frac{k_{2}(t-s)}{\epsilon}}\kappa\Big(\mathbb{E}|X_{s}^{\delta,\epsilon,h^{\delta}}-X_{s(\Delta)}^{\delta,\epsilon,h^{\delta}}|^{2}+\mathbb{E}|X_{s}^{\delta,\epsilon}-X_{s(\Delta)}^{\delta,\epsilon}|^{2}\Big)ds\\
						&+\frac{C_{T,a}}{\delta}\int_{0}^{t}e^{-\frac{k_{2}(t-s)}{\epsilon}}\mathbb{E}(1+|X_{s}^{\delta,\epsilon,h^{\delta}}|^{2}+\mathscr{L}_{X_{s}^{\delta,\epsilon}}(|\cdot|^{2}))|\dot{v}_{s}^{\delta}|^{2}ds.
					\end{aligned}
				\end{equation*}
				Then we can obtain
				\begin{equation*}\label{sec4-lemma4.4-8}
					\begin{aligned}
						&\mathbb{E}\Big(\int_{0}^{T}|Y_{t}^{\delta,\epsilon,h^{\delta}}-\bar{Y}_{t}^{\delta,\epsilon}|^{2}dt\Big)\\
						&\leq\frac{C_{T,a}}{\epsilon}\Big[\int_{0}^{T}\kappa\Big(\mathbb{E}|X_{s}^{\delta,\epsilon,h^{\delta}}-X_{s(\Delta)}^{\delta,\epsilon,h^{\delta}}|^{2}+\mathbb{E}|X_{s}^{\delta,\epsilon}-X_{s(\Delta)}^{\delta,\epsilon}|^{2}\Big)(\int_{s}^{T}e^{-\frac{k_{2}(t-s)}{\epsilon}}dt)ds\Big]\\
						&\quad+\frac{C_{T,a}}{\delta}\Big(\int_{0}^{T}\mathbb{E}(1+|X_{s}^{\delta,\epsilon,h^{\delta}}|^{2}+|X_{s}^{\delta,\epsilon}|^{2})|\dot{v}_{s}^{\delta}|^{2}(\int_{s}^{T}e^{-\frac{k_{2}(t-s)}{\epsilon}}dt)ds\Big)\\
						&\leq\frac{C_{T,a}}{k_{2}}\int_{0}^{T}\kappa\Big(\mathbb{E}|X_{s}^{\delta,\epsilon,h^{\delta}}-X_{s(\Delta)}^{\delta,\epsilon,h^{\delta}}|^{2}+\mathbb{E}|X_{s}^{\delta,\epsilon}-X_{s(\Delta)}^{\delta,\epsilon}|^{2}\Big)ds\\
						&\quad+\frac{C_{T,a}}{k_{2}}(\frac{\epsilon}{\delta})\mathbb{E}\Big(\sup_{s\in[0,T]}(1+|X_{s}^{\delta,\epsilon,h^{\delta}}|^{2}+|X_{s}^{\delta,\epsilon}|^{2})\Big)\int_{0}^{T}|\dot{v}_{s}^{\delta}|^{2}ds\\
						\leq&C_{T,H,a,M,|x|,|y|,\beta_{1},\beta_{2}}(\frac{\epsilon}{\delta}+\kappa(\Delta^{2}\vee\Delta^{2H})).
					\end{aligned}
				\end{equation*}
				This completes the proof.
			\end{proof}
			
			\begin{lemma}\label{sec4-lemma4.5}
				Under the assumptions $\mathbf{(H1)}$ and the condition \eqref{sec3-eq3.2}, for any $T>0$ , $t\in[0,T]$, we have
				$$\lim_{\delta\rightarrow0}\mathbb{E}|X_{t}^{\delta,\epsilon}-\bar{X}_{t}^{0}|^{2}=0.$$
			\end{lemma}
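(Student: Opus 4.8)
\textit{Proof (plan).} The plan is to write $X_t^{\delta,\epsilon}-\bar X_t^0$ as the sum of a drift integral and the small fractional noise, and then to show that both pieces vanish in $L^2$ as $\delta\to0$, closing the argument with a Bihari-type inequality. Subtracting Eq. \eqref{sec1-eq1.4} from the slow equation in Eq. \eqref{sec1-eq1.2} gives
\[
X_t^{\delta,\epsilon}-\bar X_t^0=\int_0^t\big[b(s,X_s^{\delta,\epsilon},\mathscr{L}_{X_s^{\delta,\epsilon}},Y_s^{\delta,\epsilon})-\bar b(s,\bar X_s^0,\delta_{\bar X_s^0})\big]ds+\delta^H\int_0^t\sigma(s,\mathscr{L}_{X_s^{\delta,\epsilon}})dB_s^H.
\]
The noise term is handled directly: by Lemma \ref{sec2-lemma2.1}, the linear growth of $\sigma$ coming from $\mathbf{(H1)}$, and the moment bounds obtained in the proof of Lemma \ref{sec4-lemma4.2}, one gets $\delta^{2H}\,\mathbb{E}|\int_0^t\sigma\,dB_s^H|^2\le C\,\delta^{2H}\to0$.

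For the drift I would insert the intermediate term $\bar b(s,X_s^{\delta,\epsilon},\mathscr{L}_{X_s^{\delta,\epsilon}})$ and split into an \emph{averaging} part and a \emph{stability} part. The stability part $\bar b(s,X_s^{\delta,\epsilon},\mathscr{L}_{X_s^{\delta,\epsilon}})-\bar b(s,\bar X_s^0,\delta_{\bar X_s^0})$ is controlled by the $(K,\kappa)$-modulus of continuity of $\bar b$ recorded in Lemma \ref{sec4-lemma4.1}, together with $\mathbb{W}_2(\mathscr{L}_{X_s^{\delta,\epsilon}},\delta_{\bar X_s^0})^2\le\mathbb{E}|X_s^{\delta,\epsilon}-\bar X_s^0|^2$, producing a term of the form $\int_0^t\kappa\big(\mathbb{E}|X_s^{\delta,\epsilon}-\bar X_s^0|^2\big)ds$ that will be absorbed at the end.

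The crux is the averaging part $\mathbb{E}\big|\int_0^t[b(s,X_s^{\delta,\epsilon},\mathscr{L}_{X_s^{\delta,\epsilon}},Y_s^{\delta,\epsilon})-\bar b(s,X_s^{\delta,\epsilon},\mathscr{L}_{X_s^{\delta,\epsilon}})]ds\big|^2$. Here I would invoke Khasminskii's time-discretization, working with the auxiliary fast process $\bar Y^{\delta,\epsilon}$ (the $h\equiv0$ analogue of the process introduced before Lemma \ref{sec4-lemma4.4}). First replace the integrand by its frozen version $b(s(\Delta),X_{s(\Delta)}^{\delta,\epsilon},\mathscr{L}_{X_{s(\Delta)}^{\delta,\epsilon}},\bar Y_s^{\delta,\epsilon})$, estimating the replacement error through $\mathbf{(H1)}$, the time-regularity estimate \eqref{sec4-lemma4.3.1} of Lemma \ref{sec4-lemma4.3}, and the $L^2$-closeness of $Y^{\delta,\epsilon}$ and $\bar Y^{\delta,\epsilon}$ (Lemma \ref{sec4-lemma4.4}, specialized to $\dot v^\delta\equiv0$); correspondingly replace $\bar b(s,\cdot)$ by $\bar b(s(\Delta),X_{s(\Delta)}^{\delta,\epsilon},\mathscr{L}_{X_{s(\Delta)}^{\delta,\epsilon}})$. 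It then remains to bound, on each cell $[k\Delta,(k+1)\Delta]$, the quantity $\mathbb{E}\big|\int_{k\Delta}^{(k+1)\Delta}[b(k\Delta,X_{k\Delta}^{\delta,\epsilon},\mathscr{L}_{X_{k\Delta}^{\delta,\epsilon}},\bar Y_s^{\delta,\epsilon})-\bar b(k\Delta,X_{k\Delta}^{\delta,\epsilon},\mathscr{L}_{X_{k\Delta}^{\delta,\epsilon}})]ds\big|^2$. Conditioning on $\mathscr{F}_{k\Delta}$ and using the time change $s=k\Delta+\epsilon r$, the cell integral becomes $\epsilon\int_0^{\Delta/\epsilon}[b-\bar b]\,dr$ along the frozen dynamics whose unique invariant measure is $\nu^{k\Delta,X_{k\Delta},\mathscr{L}_{X_{k\Delta}}}$, so the exponential ergodicity furnished by $\mathbf{(H2)}$ gives a bound of order $\epsilon/\Delta$ per cell, hence of order $\epsilon/\Delta$ after summing the $\lfloor T/\Delta\rfloor$ cells.

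Finally I would choose $\Delta=\Delta(\delta)$ with $\Delta\to0$ and $\epsilon/\Delta\to0$ as $\delta\to0$, which is possible thanks to \eqref{sec3-eq3.2}. Then the noise term, all discretization errors, and the averaging bound tend to $0$, leaving the inequality $\mathbb{E}|X_t^{\delta,\epsilon}-\bar X_t^0|^2\le \eta(\delta)+C\int_0^t\kappa\big(\mathbb{E}|X_s^{\delta,\epsilon}-\bar X_s^0|^2\big)ds$ with $\eta(\delta)\to0$; the Bihari--LaSalle inequality, using the concavity of $\kappa$ and $\int_{0^+}\frac{du}{\kappa(u)}=+\infty$ from $\mathbf{(H1)}$, yields $\lim_{\delta\to0}\mathbb{E}|X_t^{\delta,\epsilon}-\bar X_t^0|^2=0$. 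The main obstacle is precisely the per-cell ergodic averaging estimate: making rigorous, with an explicit $\epsilon/\Delta$ rate that survives summation over cells, the convergence of the time-average of $b$ along $\bar Y^{\delta,\epsilon}$ to $\bar b$, while the coefficients still carry the evolving law $\mathscr{L}_{X^{\delta,\epsilon}}$. This is where the uniform moment bounds, the exponential ergodicity from $\mathbf{(H2)}$, and Lemma \ref{sec4-lemma4.4} must be combined with care.
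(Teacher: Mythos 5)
Your plan is correct in substance, but it is not what the paper does: the paper's entire proof of this lemma is a one-line citation, namely that the result follows from (Shen et al. \cite{Shen2}, Theorem 3.2) ``with slight modification,'' so all of the content you supply is delegated to that reference. What you reconstruct --- splitting $X^{\delta,\epsilon}-\bar X^{0}$ into the drift and the $\delta^{H}$-noise (the latter killed by the factor $\delta^{2H}$ via Lemma \ref{sec2-lemma2.1}), inserting $\bar b(s,X_s^{\delta,\epsilon},\mathscr{L}_{X_s^{\delta,\epsilon}})$, Khasminskii discretization with the auxiliary fast process, a per-cell ergodic estimate from $\mathbf{(H2)}$, and a Bihari closing argument --- is precisely the machinery this paper runs for the controlled process in Step 2 of Proposition \ref{sec4-prop4.3} (the correlation functional $\Phi_k(s,r)$, the bound $C(\frac{\epsilon^2}{\Delta^2}\cdot\frac{\Delta}{\epsilon}+\Delta^2)$, the function $G$, and the choice $\Delta=\delta^{1/2}$), and it is in effect the proof behind the citation. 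Your specialization $h^{\delta}\equiv0$ of Lemmas \ref{sec4-lemma4.2}--\ref{sec4-lemma4.4} is legitimate, since then $X^{\delta,\epsilon,h^{\delta}}=X^{\delta,\epsilon}$ and $Y^{\delta,\epsilon,h^{\delta}}=Y^{\delta,\epsilon}$, and there is no circularity because those lemmas are proved without Lemma \ref{sec4-lemma4.5}. So your route buys a self-contained argument where the paper buys brevity. Two points to tighten: (i) the ergodicity bound per cell is $\mathbb{E}\big|\int_{k\Delta}^{(k+1)\Delta}[b-\bar b]\,ds\big|^2\leq C\epsilon^{2}\cdot\frac{\Delta}{\epsilon}=C\epsilon\Delta$, not $\epsilon/\Delta$; the rate $\epsilon/\Delta$ emerges only after the Cauchy--Schwarz step over the $[T/\Delta]$ cells, which costs a factor $[T/\Delta]^{2}$, so your final rate is right but the intermediate bookkeeping is off; (ii) your noise estimate and the Bihari closure need $\mathbb{E}\sup_{t\leq T}|X_t^{\delta,\epsilon}|^{2}$ to stay bounded as $\delta\to0$, and the constant recorded for this quantity in the proof of Lemma \ref{sec4-lemma4.2} depends on $\delta$ and $\epsilon$; you should either verify that this dependence remains bounded along the limit or, as the paper implicitly does, quote the uniform moment bound of \cite{Shen2} directly.
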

			\begin{proof}
				The result follows by (Shen et al. \cite{Shen2}, Theorem 3.2) with slight modification and we just omit the details here.
			\end{proof}
			
			\subsection{The Proof of Main result }\label{sec4.2}
			In this subsection,  we aim to   verify the criterions $(i)$ and $(ii)$ in  Hypothesis \ref{hypo3.4}, these will be presented in Propositions \ref{sec4-prop4.2} and \ref{sec4-prop4.3}, respectively.
			\begin{proposition}\label{sec4-prop4.2}
				Under the assumptions $\mathbf{(H1)}$ and the condition (\ref{sec3-eq3.2}), let $\{h^{\delta}\}_{\delta>0}\subset S_{M}$ for any $M<\infty$ such that $h^{\delta}$ converges to element $h$ in $S_{M}$ as $\delta\rightarrow0$. Then $\mathcal{G}^{0}(\int_{0}^{t}\dot{h}^{\delta}_{s}ds)$ converges to $\mathcal{G}^{0}(\int_{0}^{t}\dot{h}_{s}ds)$ in $C([0,T];\mathbb{R}^{n}))$, that is to say
				$$\lim_{n\rightarrow\infty}\sup_{t\in[0,T]}\Big|\mathcal{G}^{0}(\int_{0}^{t}\dot{h}^{\delta}_{s}ds)-\mathcal{G}^{0}(\int_{0}^{t}\dot{h}_{s}ds)\Big|=0.$$
			\end{proposition}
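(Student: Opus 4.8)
The plan is to write the difference $\bar X^{h^\delta}-\bar X^h$ of the two skeleton trajectories as a drift contribution governed by the concave modulus $\kappa$ plus a control contribution driven by $\dot u^\delta-\dot u$, to show that the latter vanishes uniformly in $t$ using the weak convergence $h^\delta\to h$, and finally to close the estimate with a Bihari-type inequality. First I would record that the measure argument in the skeleton equation \eqref{sec1-eq1.5} is the law $\mathscr{L}_{\bar X_{s}^{0}}=\delta_{\bar X_{s}^{0}}$ of the \emph{deterministic} averaged path $\bar X^{0}$ solving \eqref{sec1-eq1.4}, which is independent of the control. Hence, writing $h^{\delta}=(u^{\delta},v^{\delta})$ and $h=(u,v)$ with $u^{\delta}=K_{H}\hat u^{\delta}$, $u=K_{H}\hat u$, the two trajectories share the same coefficient $\sigma(s,\mathscr{L}_{\bar X_{s}^{0}})$, and subtraction gives
$$
\bar X_{t}^{h^{\delta}}-\bar X_{t}^{h}=\int_{0}^{t}\big(\bar b(s,\bar X_{s}^{h^{\delta}},\mathscr{L}_{\bar X_{s}^{0}})-\bar b(s,\bar X_{s}^{h},\mathscr{L}_{\bar X_{s}^{0}})\big)\,ds+\Phi^{\delta}(t),
$$
where $\Phi^{\delta}(t):=\int_{0}^{t}\sigma(s,\mathscr{L}_{\bar X_{s}^{0}})(\dot u_{s}^{\delta}-\dot u_{s})\,ds$. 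Lemma \ref{sec4-lemma4.1} supplies the uniform bound $\sup_{\delta}\sup_{t}|\bar X_{t}^{h^{\delta}}|\le C_{T,H,M,|x|}$, so all trajectories remain in a fixed bounded set.

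The hard part will be showing $\sup_{t\in[0,T]}|\Phi^{\delta}(t)|\to 0$, and this is precisely where the weak convergence enters. Convergence $h^{\delta}\to h$ in $S_{M}$ is convergence in the weak topology of $\mathcal H$, that is $\hat u^{\delta}\rightharpoonup\hat u$ weakly in $L^{2}([0,T];\mathbb{R}^{n})$. Since $\dot u^{\delta}=\dot K_{H}\hat u^{\delta}$ by Lemma \ref{sec2-lemma2.0} and $\dot K_{H}$ is a bounded linear operator on $L^{2}$, we get $\dot u^{\delta}\rightharpoonup\dot u$ weakly. For each fixed $t$ the map $g\mapsto\int_{0}^{t}\sigma(s,\mathscr{L}_{\bar X_{s}^{0}})g_{s}\,ds=\langle\sigma(\cdot,\mathscr{L}_{\bar X_{\cdot}^{0}})\mathbf{1}_{[0,t]},g\rangle_{L^{2}}$ is a bounded linear functional, since its kernel is bounded (hence square-integrable) by $\mathbf{(H1)}$ and the boundedness of $\bar X^{0}$; therefore $\Phi^{\delta}(t)\to 0$ pointwise in $t$. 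To upgrade pointwise to uniform convergence I would establish equicontinuity: for $0\le s<t\le T$, Cauchy--Schwarz together with $\|\dot K_{H}f\|_{L^{2}}\le C_{H}\|f\|_{L^{2}}$ and $\|\hat u^{\delta}\|_{L^{2}}^{2}=\|u^{\delta}\|_{\mathcal H_{H}}^{2}\le 2M$ yield
$$
|\Phi^{\delta}(t)-\Phi^{\delta}(s)|\le\|\sigma\|_{\infty}\,(t-s)^{1/2}\,\|\dot u^{\delta}-\dot u\|_{L^{2}}\le C_{T,H,M}\,(t-s)^{1/2},
$$
so $\{\Phi^{\delta}\}_{\delta}$ is equi-H\"older of order $1/2$. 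Pointwise convergence together with this uniform equicontinuity on the compact interval $[0,T]$ then forces $\sup_{t\in[0,T]}|\Phi^{\delta}(t)|\to 0$.

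It remains to control the drift difference. Because the measure arguments coincide, the regularity of $\bar b$ recalled in the proof of Lemma \ref{sec4-lemma4.1} reduces to $|\bar b(s,x_{1},\mathscr{L}_{\bar X_{s}^{0}})-\bar b(s,x_{2},\mathscr{L}_{\bar X_{s}^{0}})|^{2}\le\kappa(|x_{1}-x_{2}|^{2})$. Setting $G^{\delta}(t):=\sup_{r\le t}|\bar X_{r}^{h^{\delta}}-\bar X_{r}^{h}|^{2}$, applying Cauchy--Schwarz to the drift integral, and using that $\kappa$ is nondecreasing, I would obtain
$$
G^{\delta}(t)\le 2\sup_{r\in[0,T]}|\Phi^{\delta}(r)|^{2}+2T\int_{0}^{t}\kappa\big(G^{\delta}(s)\big)\,ds.
$$
Since $\kappa$ is concave and nondecreasing with $\kappa(0)=0$ and $\int_{0^{+}}\frac{du}{\kappa(u)}=+\infty$, the Bihari inequality applies: as the free term $2\sup_{r}|\Phi^{\delta}(r)|^{2}\to 0$, it forces $G^{\delta}(T)\to 0$, i.e. $\sup_{t\in[0,T]}|\bar X_{t}^{h^{\delta}}-\bar X_{t}^{h}|\to 0$, which is exactly the asserted convergence of $\mathcal{G}^{0}(\int_{0}^{\cdot}\dot h_{s}^{\delta}\,ds)$ to $\mathcal{G}^{0}(\int_{0}^{\cdot}\dot h_{s}\,ds)$ in $C([0,T];\mathbb{R}^{n})$. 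The only genuine difficulty is the uniform-in-$t$ control of $\Phi^{\delta}$; everything else is a deterministic Bihari argument.
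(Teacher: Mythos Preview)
Your proof is correct and in fact cleaner than the paper's own argument. The paper proceeds indirectly: it invokes Lemma~\ref{sec4-lemma4.1} for uniform boundedness, proves equicontinuity of $\{\bar X^{h^{\delta}}\}$ by bounding $|\bar X^{h^{\delta}}_t-\bar X^{h^{\delta}}_s|$, applies Arzel\`a--Ascoli to obtain relative compactness in $C([0,T];\mathbb{R}^{n})$, and then identifies any limit point with $\bar X^{h}$ by citing Fan et al.\ (\cite{Fan}, Proposition~4.5). You instead subtract the two skeleton equations directly, isolate the control term $\Phi^{\delta}(t)=\int_{0}^{t}\sigma(s,\mathscr{L}_{\bar X_{s}^{0}})(\dot u^{\delta}_{s}-\dot u_{s})\,ds$, show $\sup_{t}|\Phi^{\delta}(t)|\to 0$ via weak convergence plus a uniform H\"older-$\tfrac12$ modulus, and close with a Bihari inequality using only the $\kappa$-continuity of $\bar b$ in the state variable (the measure argument $\mathscr{L}_{\bar X^{0}_{s}}$ being common to both trajectories).

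Your approach has two advantages: it is self-contained (no appeal to \cite{Fan}), and the key step---uniform convergence of $\Phi^{\delta}$---is made completely explicit through the combination of pointwise weak-limit convergence and the equicontinuity bound $|\Phi^{\delta}(t)-\Phi^{\delta}(s)|\le C_{T,H,M}(t-s)^{1/2}$. The paper's equicontinuity estimate for the control integral $\int_{s}^{t}\sigma\,\dot u^{\delta}_{r}\,dr$ is stated in a way that does not visibly carry a factor of $(t-s)^{1/2}$, so your formulation is sharper at that point. The paper's route, on the other hand, is the more standard ``compactness plus identification of the limit'' pattern, and has the virtue that the identification step can sometimes be reused from the literature. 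Both are valid; your argument simply does the work more directly.
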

			
			\begin{proof}
				Let $\bar{X}^{h^{\delta}}=\mathcal{G}^{0}(\int_{0}^{t}\dot{h}_{s}^{\delta}ds)$, then $\bar{X}^{h^{\delta}}$ is the solution of the following equation
				\begin{equation}\label{sec4-eq4.7}
					\begin{aligned}
						&d\bar{X}_{t}^{h^{\delta}}=\bar{b}(t,\bar{X}_{t}^{h^{\delta}},\mathscr{L}_{\bar{X}_{t}^{0}})dt+\sigma(t,\mathscr{L}_{\bar{X}_{t}^{0}})\dot{u}_{t}^{\delta}dt.\\
					\end{aligned}
				\end{equation}
				If $h^{\delta}$ converges to an element $h$ in $S_{M}$ as $\delta\rightarrow0$, it suffices to show that $\bar{X}^{h^{\delta}}$ strongly converges to $\bar{X}^{h}$ in $C([0,T];\mathbb{R}^{n})$ as $\delta\rightarrow0$, which implies  $\bar{X}^{h^{\delta}}$ is relatively compact in $C([0,T];\mathbb{R}^{n})$.
				By the Arzel$\grave{a}$-Ascoli theorem, we just need to prove that $\{\bar{X}^{h^{\delta}}\}$ is uniformly bounded and equi-continuous in $C([0,T];\mathbb{R}^{n})$. For the boundness of $\{\bar{X}^{h^{\delta}}\}$, replacing $h$ in Lemma \ref{sec4-lemma4.1}  with $h^{\delta}$ and it is easy to obtain
				\begin{equation*}\label{sec4-eq4.8}
					\sup_{h\in S_{M}}\Big\{\sup_{t\in[0,T]}|\bar{X}_{t}^{h^{\delta}}|\Big\}\leq C_{T,H,M,|x|}.
				\end{equation*}
				Then, we need to verify the equi-continuous of $|\bar{X}_{t}^{h^{\delta}}|$ in $C([0,T];\mathbb{R}^{n})$. From the Eq. (\ref{sec4-eq4.7}), we can deduce that for $0\leq s<t\leq T$,
				\begin{equation*}\label{sec4-eq4.9}
					\bar{X}_{t}^{h^{\delta}}-\bar{X}_{s}^{h^{\delta}}=\int_{s}^{t}\bar{b}(r,\bar{X}_{r}^{h^{\delta}},\mathscr{L}_{\bar{X}_{r}^{0}})dr+\int_{s}^{t}\sigma(r,\mathscr{L}_{\bar{X}_{r}^{0}})\dot{u}_{r}^{\delta}dr.
				\end{equation*}
				By  assumptions ($\mathbf{H1}$), we can have
				\begin{equation*}\label{sec4-eq4.10}
					\begin{aligned}
						\Big	|\int_{s}^{t}\bar{b}(r,\bar{X}_{r}^{h^{\delta}},\mathscr{L}_{\bar{X}_{r}^{0}})dr\Big|\leq&\int_{s}^{t}|\bar{b}(r,\bar{X}_{r}^{h^{\delta}},\mathscr{L}_{\bar{X}_{r}^{0}})-\bar{b}(r,0,\delta_{0})|dr+\int_{s}^{t}|\bar{b}(r,0,\delta_{0})|dr\\
						\leq&C_{T}\Big(\int_{s}^{t}K(r^{2})a(|\bar{X}_{r}^{h^{\delta}}|^{2}+|\bar{X}_{r}^{0}|^{2}+1)dr\Big)^{\frac{1}{2}}+C_{T}\Big(\int_{s}^{t}K(r^{2})dr\Big)^{\frac{1}{2}}\\
						\leq&C_{T}\Big(K(t^{2})(1+a(1+C_{T,H,M,|x|}+\sup_{r\in[0,T]}|\bar{X}_{r}^{0}|^{2})\Big)^{\frac{1}{2}},
					\end{aligned}
				\end{equation*}
				which is bounded for $t\in[0,T]$.
				
				As for the boundness of the integral $\int_{s}^{t}\sigma(r,\mathscr{L}_{\bar{X}_{r}^{0}})\dot{u}^{\delta}_{r}dr$, we can have
				\begin{equation*}
					\begin{aligned}
						\int_{s}^{t}\sigma(r,\mathscr{L}_{\bar{X}_{r}^{0}})\dot{u}_{r}^{\delta}dr
						&\leq C_{T}\Big(\int_{s}^{t}\|\sigma(r,\mathscr{L}_{\bar{X}_{r}^{0}})\|^{2}\cdot|\dot{u}^{\delta}_{r}|^{2}dr\Big)^{\frac{1}{2}}\\
						&\leq C_{T}\Big(\int_{s}^{t}K(r^{2})(1+a(1+|\bar{X}_{r}^{0}|^{2}))dr\Big)^{\frac{1}{2}}\cdot(\|u_{t}^{\delta}\|^{2}_{\mathcal{H}_{H}})^{\frac{1}{2}}\\
						&\leq C_{T,a,M}(1+\sup_{r\in[s,t]}|\bar{X}_{r}^{0}|^{2})^{\frac{1}{2}}.
					\end{aligned}
				\end{equation*}
				Thus we have proved that $\{\bar{X}^{h^{\delta}}\}$ is relatively compact in $C([0,T];\mathbb{R}^{n})$, which implies that for any sequence of $\{\bar{X}^{h^{\delta}}\}$, we can extract a further subsequence such that $\bar{X}^{h^{\delta}}$ converges to some $\hat{\bar{X}}$ in $C([0,T];\mathbb{R}^{n})$. In the following discussion, we aim to show that $\hat{\bar{X}}=\bar{X}^{h}$. By a small modification in Fan et al. (\cite{Fan}, Proposition 4.5), we can easily come to this conclusion. This completes the proof.
			\end{proof}

			\begin{proposition}\label{sec4-prop4.3}
				Under the assumptions in Theorem \ref{sec3-th3.5}, let $\{h^{\delta}\}_{\delta>0}\subset\mathcal{A}_{M}$ for any $M<\infty$. Then for any $\epsilon_{0}>0$, we have
				$$\lim_{\delta\rightarrow0}\mathbb{P}\Big(d(\mathcal{G}^{\delta}(\delta^{H}B^{H}_{\cdot}+\int_{0}^{\cdot}\dot{h}_{s}^{\delta}ds),\mathcal{G}^{0}(\int_{0}^{\cdot}\dot{h}_{s}^{\delta}ds))>\epsilon_{0}\Big)=0,$$
				where $d(\cdot,\cdot)$ denotes the metric in the space $C([0,T];\mathbb{R}^{n})$.
			\end{proposition}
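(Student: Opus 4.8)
The plan is to prove the stronger mean-square statement
\[
\lim_{\delta\to0}\mathbb{E}\Big(\sup_{t\in[0,T]}|X_t^{\delta,\epsilon,h^\delta}-\bar{X}_t^{h^\delta}|^2\Big)=0,
\]
where $X^{\delta,\epsilon,h^\delta}=\mathcal{G}^\delta(\delta^H B^H_\cdot+\int_0^\cdot\dot{h}_s^\delta ds)$ solves the control system \eqref{sec1-eq1.7} and $\bar{X}^{h^\delta}=\mathcal{G}^0(\int_0^\cdot\dot{h}_s^\delta ds)$ solves the skeleton equation \eqref{sec4-eq4.7}; the stated convergence in probability then follows at once from Chebyshev's inequality. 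Subtracting the two equations and using $\mathscr{L}_{\bar{X}_s^0}=\delta_{\bar{X}_s^0}$, I would decompose the difference into three integrals: a drift term $\int_0^t[b(s,X_s^{\delta,\epsilon,h^\delta},\mathscr{L}_{X_s^{\delta,\epsilon}},Y_s^{\delta,\epsilon,h^\delta})-\bar{b}(s,\bar{X}_s^{h^\delta},\mathscr{L}_{\bar{X}_s^0})]ds$, a diffusion-gap term $\int_0^t[\sigma(s,\mathscr{L}_{X_s^{\delta,\epsilon}})-\sigma(s,\mathscr{L}_{\bar{X}_s^0})]\dot{u}_s^\delta ds$, and a vanishing-noise term $\delta^H\int_0^t\sigma(s,\mathscr{L}_{X_s^{\delta,\epsilon}})dB_s^H$.

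The two auxiliary terms are dealt with first. For the noise term, Lemma \ref{sec2-lemma2.1} together with the moment bound on $\mathbb{E}\sup_t|X_t^{\delta,\epsilon}|^2$ obtained inside the proof of Lemma \ref{sec4-lemma4.2} gives $\mathbb{E}\sup_t|\delta^H\int_0^t\sigma\,dB^H|^2\le\delta^{2H}C_{T,H}\int_0^T\mathbb{E}\|\sigma(s,\mathscr{L}_{X_s^{\delta,\epsilon}})\|^2ds$, which tends to $0$ because the prefactor $\delta^{2H}\to0$. For the diffusion-gap term, assumption $\mathbf{(H1)}$ yields $\|\sigma(s,\mathscr{L}_{X_s^{\delta,\epsilon}})-\sigma(s,\delta_{\bar{X}_s^0})\|^2\le K(s^2)\kappa(\mathbb{W}_2(\mathscr{L}_{X_s^{\delta,\epsilon}},\delta_{\bar{X}_s^0})^2)$, and since $\mathbb{W}_2(\mathscr{L}_{X_s^{\delta,\epsilon}},\delta_{\bar{X}_s^0})^2\le\mathbb{E}|X_s^{\delta,\epsilon}-\bar{X}_s^0|^2\to0$ by Lemma \ref{sec4-lemma4.5}, a Cauchy--Schwarz estimate controlled by the uniform bound $\|u^\delta\|_{\mathcal{H}_H}^2\le2M$ shows this contribution also vanishes.

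The drift term is the heart of the matter. I would insert the intermediate quantity $\bar{b}(s,X_s^{\delta,\epsilon,h^\delta},\mathscr{L}_{X_s^{\delta,\epsilon}})$ and split it into a regularity part $\bar{b}(s,X_s^{\delta,\epsilon,h^\delta},\mathscr{L}_{X_s^{\delta,\epsilon}})-\bar{b}(s,\bar{X}_s^{h^\delta},\delta_{\bar{X}_s^0})$ and an averaging part $b(s,X_s^{\delta,\epsilon,h^\delta},\mathscr{L}_{X_s^{\delta,\epsilon}},Y_s^{\delta,\epsilon,h^\delta})-\bar{b}(s,X_s^{\delta,\epsilon,h^\delta},\mathscr{L}_{X_s^{\delta,\epsilon}})$. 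The regularity part is bounded, via the concave-modulus estimate for $\bar{b}$ recorded in the proof of Lemma \ref{sec4-lemma4.1}, by $\kappa(|X_s^{\delta,\epsilon,h^\delta}-\bar{X}_s^{h^\delta}|^2+\mathbb{W}_2(\mathscr{L}_{X_s^{\delta,\epsilon}},\delta_{\bar{X}_s^0})^2)$, whose first argument feeds the Gronwall loop and whose second tends to zero by Lemma \ref{sec4-lemma4.5}. The averaging part is the main obstacle and is attacked by the Khasminskii time-discretization: first replace $Y_s^{\delta,\epsilon,h^\delta}$ by the auxiliary process $\bar{Y}_s^{\delta,\epsilon}$ with error controlled by Lemma \ref{sec4-lemma4.4}, then freeze the slow inputs on each subinterval $[k\Delta,(k+1)\Delta)$ using the increment estimates of Lemma \ref{sec4-lemma4.3}, and finally invoke the exponential ergodicity of the frozen equation granted by $\mathbf{(H2)}$ to replace each frozen time average $\frac{1}{\Delta}\int_{k\Delta}^{(k+1)\Delta}b(k\Delta,X_{k\Delta}^{\delta,\epsilon,h^\delta},\mathscr{L}_{X_{k\Delta}^{\delta,\epsilon}},\bar{Y}_s^{\delta,\epsilon})ds$ by $\bar{b}(k\Delta,X_{k\Delta}^{\delta,\epsilon,h^\delta},\mathscr{L}_{X_{k\Delta}^{\delta,\epsilon}})$. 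This produces a bound of the schematic form $C(\Delta+\frac{\epsilon}{\delta}+\frac{\epsilon}{\Delta}+\kappa(\Delta^2\vee\Delta^{2H}))$.

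Collecting the three contributions yields an inequality $\mathbb{E}\sup_{r\le t}|X_r^{\delta,\epsilon,h^\delta}-\bar{X}_r^{h^\delta}|^2\le\rho(\delta)+C\int_0^t\kappa\big(\mathbb{E}\sup_{r\le s}|X_r^{\delta,\epsilon,h^\delta}-\bar{X}_r^{h^\delta}|^2\big)ds$, where $\rho(\delta)\to0$ once $\Delta=\Delta(\delta)\to0$ is chosen so that $\epsilon/\Delta\to0$ as well, which is feasible precisely because $\epsilon/\delta\to0$ by \eqref{sec3-eq3.2}. Since $\kappa$ is concave, nondecreasing and satisfies $\int_{0^+}\kappa(u)^{-1}du=+\infty$, Bihari's inequality forces the left-hand side to zero as $\delta\to0$, establishing the claim and, through Chebyshev's inequality, the asserted convergence in probability. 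I expect the ergodic averaging step to be the principal technical difficulty, since it is there that the non-Markovian fractional driver, the fast standard-Brownian dynamics and the distribution dependence must all be reconciled at once.
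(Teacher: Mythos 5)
Your proposal is correct and follows essentially the same route as the paper's proof: the same reduction to mean-square convergence plus Chebyshev, the same three-term decomposition (drift, controlled diffusion gap, vanishing fBm noise handled by Lemma \ref{sec2-lemma2.1} and Lemma \ref{sec4-lemma4.5}), the same Khasminskii discretization of the averaging part using Lemmas \ref{sec4-lemma4.3}, \ref{sec4-lemma4.4} and the ergodicity of the frozen equation, and the same Bihari closing argument with $\Delta=\Delta(\delta)$ chosen so that $\epsilon/\Delta\rightarrow 0$ (the paper takes $\Delta=\delta^{1/2}$). Your two-stage splitting of the drift (regularity part plus averaging part, with the latter then discretized) is just a repackaging of the paper's four terms $I_{11}$--$I_{14}$.
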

			
			\begin{proof}
				The proof of Proposition \ref{sec4-prop4.3} will be separated into two steps.
				
				\textbf{Step 1.}
				From the Eq. (\ref{sec1-eq1.7}) and Eq. (\ref{sec4-eq4.7}), we can get that $X^{\delta,\epsilon,h^{\delta}}_{t}-\bar{X}_{t}^{h^{\delta}}$ satisfies the equations
				\begin{equation*}\label{sec4-eq4.11}
					\left\{
					\begin{aligned}
						d(X_{t}^{\delta,\epsilon,h^{\delta}}-\bar{X}_{t}^{h^{\delta}})=&\Big[b(t,X_{t}^{\delta,\epsilon,h^{\delta}},\mathscr{L}_{X_{t}^{\delta,\epsilon}},Y_{t}^{\delta,\epsilon,h^{\delta}})-\bar{b}(t,\bar{X}_{t}^{h^{\delta}},\mathscr{L}_{\bar{X}_{t}^{0}})\Big]dt\\
						&+\Big[\sigma(t,\mathscr{L}_{X_{t}^{\delta,\epsilon}})-\sigma(t,\mathscr{L}_{\bar{X}_{t}^{0}})\Big]\dot{u}^{\delta}_{t}dt	+\delta^{H}\sigma(t,\mathscr{L}_{X_{t}^{\delta,\epsilon}})dB_{t}^{H},\\
						X_{0}^{\delta,\epsilon,h^{\delta}}-\bar{X}_{0}^{h^{\delta}}=0.&
					\end{aligned}
					\right.
				\end{equation*}
				Then, it follows that
				\begin{equation}\label{sec4-eq4.12}
					\begin{aligned}
						|X_{t}^{\delta,\epsilon,h^{\delta}}-\bar{X}_{t}^{h^{\delta}}|^{2}\leq&3\Big|\int_{0}^{t}(b(s,X_{s}^{\delta,\epsilon,h^{\delta}},\mathscr{L}_{X_{s}^{\delta,\epsilon}},Y_{s}^{\delta,\epsilon,h^{\delta}})-\bar{b}(s,\bar{X}_{s}^{h^{\delta}},\mathscr{L}_{\bar{X}_{s}^{0}}))ds\Big|^{2}\\
						&+3\Big|\int_{0}^{t}(\sigma(s,\mathscr{L}_{X_{s}^{\delta,\epsilon}})-\sigma(s,\mathscr{L}_{\bar{X}_{s}^{0}}))\dot{u}^{\delta}_{s}ds\Big|^{2}\\
						&+3\delta^{2H}\Big|\int_{0}^{t}\sigma(s,\mathscr{L}_{X_{s}^{\delta,\epsilon}})dB_{s}^{H}\Big|^{2}\\
						=&:I_{1}(t)+I_{2}(t)+I_{3}(t).
					\end{aligned}
				\end{equation}
				
				Using assumptions ($\mathbf{H1}$) and H\"{o}lder inequality, we can have
				\begin{equation}\label{sec4-eq4.13}
					\begin{aligned}
						\mathbb{E}\Big(\sup_{t\in[0,T]}	I_{2}(t)\Big)
						&=3\mathbb{E}\Big(\sup_{t\in[0,T]}\Big|\int_{0}^{t}(\sigma(s,\mathscr{L}_{X_{s}^{\delta,\epsilon}})-\sigma(s,\mathscr{L}_{\bar{X}_{s}^{0}}))\dot{u}_{s}^{\delta}ds\Big|^{2}\Big)\\
						&\leq C_{T,H}\mathbb{E}\Big(\sup_{t\in[0,T]}\int_{0}^{t}K(s^{2})\kappa(\mathbb{W}_{2}(\mathscr{L}_{X_{s}^{\delta,\epsilon}},\mathscr{L}_{\bar{X}_{s}^{0}})^{2})|\dot{u}_{s}^{\delta}|^{2}ds\Big)\\
						&\leq C_{T,H,a,M}\kappa\Big(\mathbb{E}(\sup_{t\in[0,T]}|X_{t}^{\delta,\epsilon}-\bar{X}_{t}^{0}|^{2})\Big).
					\end{aligned}
				\end{equation}
				
				For the term $I_{3}(t)$, applying the maximal inequality in Lemma \ref{sec2-lemma2.1}, we can get
				\begin{equation}\label{sec4-eq4.14}
					\begin{aligned} \mathbb{E}(\sup_{t\in[0,T]}I_{3}(t))\leq& C_{T,H}\delta^{2H}\int_{0}^{T}\|\sigma(s,\mathscr{L}_{X_{s}^{\delta,\epsilon}})\|^{2}ds\\ \leq&C_{T,H}\delta^{2H}\Big(\int_{0}^{T}\|\sigma(s,\mathscr{L}_{X_{s}^{\delta,\epsilon}})-\sigma(s,\delta_{0})\|^{2}ds+\int_{0}^{T}\|\sigma(s,\delta_{0})\|^{2}ds\Big)\\
						\leq&C_{T,H,\delta,a}\delta^{2H}\Big(\mathbb{E}(\sup_{t\in[0,T]}|X_{t}^{\delta,\epsilon}|^{2})+1\Big).
					\end{aligned}
				\end{equation}
				
				Then for the term $I_{1}(t)$, we have
				\begin{equation*}
					\begin{aligned}
						I_{1}(t)=&3\Big|\int_{0}^{t}[b(s,X_{s}^{\delta,\epsilon,h^{\delta}},\mathscr{L}_{X_{s}^{\delta,\epsilon}},Y_{s}^{\delta,\epsilon,h^{\delta}})-b(s(\Delta),X_{s(\Delta)}^{\delta,\epsilon,h^{\delta}},\mathscr{L}_{X_{s(\Delta)}^{\delta,\epsilon}},\bar{Y}_{s}^{\delta,\epsilon})]ds\\
						&+\int_{0}^{t}[b(s(\Delta),X_{s(\Delta)}^{\delta,\epsilon,h^{\delta}},\mathscr{L}_{X_{s(\Delta)}^{\delta,\epsilon}},\bar{Y}_{s}^{\delta,\epsilon})-\bar{b}(s(\Delta),X_{s(\Delta)}^{\delta,\epsilon,h^{\delta}},\mathscr{L}_{X_{s(\Delta)}^{\delta,\epsilon}})]ds\\
						&+\int_{0}^{t}[\bar{b}(s(\Delta),X_{s(\Delta)}^{\delta,\epsilon,h^{\delta}},\mathscr{L}_{X_{s(\Delta)}^{\delta,\epsilon}})-\bar{b}(s,X_{s}^{\delta,\epsilon,h^{\delta}},\mathscr{L}_{X_{s}^{\delta,\epsilon}})]ds\\
						&+\int_{0}^{t}[\bar{b}(s,X_{s}^{\delta,\epsilon,h^{\delta}},\mathscr{L}_{X_{s}^{\delta,\epsilon}})
						-\bar{b}(s,\bar{X}_{s}^{h^{\delta}},\mathscr{L}_{\bar{X}_{s}^{0}})]ds\Big|^{2}.\\
					\end{aligned}
				\end{equation*}
				Using the basic inequality, we can get
				\begin{equation}\label{sec4-eq4.16}
					\begin{aligned}
						I_{1}(t)\leq&12\Big[\Big|\int_{0}^{t}b(s,X_{s}^{\delta,\epsilon,h^{\delta}},\mathscr{L}_{X_{s}^{\delta,\epsilon}},Y_{s}^{\delta,\epsilon,h^{\delta}})-b(s(\Delta),X_{s(\Delta)}^{\delta,\epsilon,h^{\delta}},\mathscr{L}_{X_{s(\Delta)}^{\delta,\epsilon}},\bar{Y}_{s}^{\delta,\epsilon})ds\Big|^{2}\\
						&+\Big|\int_{0}^{t}b(s(\Delta),X_{s(\Delta)}^{\delta,\epsilon,h^{\delta}},\mathscr{L}_{X_{s(\Delta)}^{\delta,\epsilon}},\bar{Y}_{s}^{\delta,\epsilon})-\bar{b}(s(\Delta),X_{s(\Delta)}^{\delta,\epsilon,h^{\delta}},\mathscr{L}_{X_{s(\Delta)}^{\delta,\epsilon}})ds\Big|^{2}\\
						&+\Big|\int_{0}^{t}\bar{b}(s(\Delta),X_{s(\Delta)}^{\delta,\epsilon,h^{\delta}},\mathscr{L}_{X_{s(\Delta)}^{\delta,\epsilon}})-\bar{b}(s,X_{s}^{\delta,\epsilon,h^{\delta}},\mathscr{L}_{X_{s}^{\delta,\epsilon}})ds\Big|^{2}\\
						&+\Big|\int_{0}^{t}\bar{b}(s,X_{s}^{\delta,\epsilon,h^{\delta}},\mathscr{L}_{X_{s}^{\delta,\epsilon}})
						-\bar{b}(s,\bar{X}_{s}^{h^{\delta}},\mathscr{L}_{\bar{X}_{s}^{0}})ds\Big|^{2}\Big]\\
						=&:12(I_{11}(t)+I_{12}(t)+I_{13}(t)+I_{14}(t)).
					\end{aligned}
				\end{equation}
				
				Taking expectation of $I_{11}(t)$, $I_{13}(t)$ and $I_{14}(t)$, we have
				\begin{equation}\label{sec4-eq4.17}
					\begin{aligned}
						\mathbb{E}&\Big(\sup_{t\in[0,T]}(I_{11}(t)+I_{13}(t))\Big)\\
						\leq& C_{T,a}\int_{0}^{T}\kappa\Big(\mathbb{E}|X_{t}^{\delta,\epsilon,h^{\delta}}-X_{t(\Delta)}^{\delta,\epsilon,h^{\delta}}|^{2}+\mathbb{W}_{2}(\mathscr{L}_{X_{t}^{\delta,\epsilon}},\mathscr{L}_{X_{t(\Delta)}^{\delta,\epsilon}})^{2}+\mathbb{E}|Y_{t}^{\delta,\epsilon,h^{\delta}}-\bar{Y}_{t}^{\delta,\epsilon}|^{2}\Big)dt\\
						\leq &C_{T,a}(1+|x|^{2}+|y|^{2})\kappa\Big(\frac{\epsilon}{\delta}+\Delta^{2}\vee\Delta^{2H}+\kappa(\Delta^{2}\vee\Delta^{2H})\Big)
					\end{aligned}
				\end{equation}
				and
				\begin{equation}\label{sec4-eq4.18}
					\begin{aligned}
						\mathbb{E}\Big(\sup_{t\in[0,T]}I_{14}(t)\Big)\leq&C_{T,a}\int_{0}^{T}\kappa\Big(\mathbb{E}|X_{t}^{\delta,\epsilon,h^{\delta}}-\bar{X}_{t}^{h^{\delta}}|^{2}+\mathbb{W}_{2}(\mathscr{L}_{X_{t}^{\delta,\epsilon}},\mathscr{L}_{\bar{X}_{t}^{0}})^{2}\Big)dt.
					\end{aligned}
				\end{equation}
				Thus, substituting the inequalities from (\ref{sec4-eq4.13}) to (\ref{sec4-eq4.18}) into(\ref{sec4-eq4.12}) and taking expectation of  $|X_{t}^{\delta,\epsilon,h^{\delta}}-\bar{X}_{t}^{h^{\delta}}|^{2}$, it can be obtained that
				\begin{equation}\label{sec4-eq4.19}
					\begin{aligned}
						\mathbb{E}\Big(\sup_{t\in[0,T]}|X_{t}^{\delta,\epsilon,h^{\delta}}-\bar{X}_{t}^{h^{\delta}}|^{2}\Big)\leq&
						\mathbb{E}(\sup_{t\in[0,T]}I_{12}(t))+
						C_{T,H,a,M}\kappa\Big(\mathbb{E}(\sup_{t\in[0,T]}|X_{t}^{\delta,\epsilon}-\bar{X}_{t}^{0}|^{2})\Big)\\
						&+C_{T,H,\delta,a}\delta^{2H}(\mathbb{E}(\sup_{t\in[0,T]}|X_{t}^{\delta,\epsilon}|^{2})+1)\\
						&+C_{T,M}(1+|x|^{2}+|y|^{2})\kappa\Big(\frac{\epsilon}{\delta}+\Delta^{2}\vee\Delta^{2H}+\kappa(\Delta^{2}\vee\Delta^{2H})\Big)\\
						&+C_{T,a}\int_{0}^{T}\kappa\Big(\mathbb{E}(\sup_{r\in[0,t]}|X_{r}^{\delta,\epsilon,h^{\delta}}-\bar{X}_{r}^{h^{\delta}}|^{2})x+\mathbb{E}(\sup_{r\in[0,t]}|X_{r}^{\delta,\epsilon}-\bar{X}_{r}^{0}|^{2})\Big)dt.
					\end{aligned}
				\end{equation}
				
				\textbf{Step 2.}
				In this step, we will consider the term $I_{12}(t)$ using the time discretization method. By dividing the time interval, we can obtain
				\begin{equation*}\label{sec4-eq4.20}
					\begin{aligned}
						&\mathbb{E}[{\sup_{t\in[0,T]}}I_{12}(t)]\\&=\mathbb{E}\Big|\int_{0}^{T}b(s(\Delta),X_{s(\Delta)}^{\delta,\epsilon,h^{\delta}},\mathscr{L}_{X_{s(\Delta)}^{\delta,\epsilon}},\bar{Y}_{s}^{\delta,\epsilon})-\bar{b}(s(\Delta),X_{s(\Delta)}^{\delta,\epsilon,h^{\delta}},\mathscr{L}_{X_{s(\Delta)}^{\delta,\epsilon}})ds\Big|^{2}\\
						&\leq2\sup_{t\in[0,T]}\mathbb{E}\Big|\sum_{k=0}^{[t/\Delta]-1}\int_{k\Delta}^{(k+1)\Delta} [b(k\Delta,X_{k\Delta}^{\delta,\epsilon,h^{\delta}},\mathscr{L}_{X_{k\Delta}^{\delta,\epsilon}},\bar{Y}_{s}^{\delta,\epsilon})-\bar{b}(k\Delta,X_{k\Delta}^{\delta,\epsilon,h^{\delta}},\mathscr{L}_{X_{k\Delta}^{\delta,\epsilon}})]ds\Big|^{2}\\
						&\quad+2\sup_{t\in[0,T]}\mathbb{E}\Big|\int_{t(\Delta)}^{t}[b(t(\Delta),X_{t(\Delta)}^{\delta,\epsilon,h^{\delta}},\mathscr{L}_{X_{t(\Delta)}^{\delta,\epsilon}},\bar{Y}_{s}^{\delta,\epsilon})-\bar{b}(t(\Delta),X_{t(\Delta)}^{\delta,\epsilon,h^{\delta}},\mathscr{L}_{X_{t(\Delta)}^{\delta,\epsilon}})]ds\Big|^{2}\\
						&\leq C \sup_{t\in[0,T]}\mathbb{E}\Big([\frac{t}{\Delta}] \sum_{k=0}^{[t/\Delta]-1}\Big|\int_{k\Delta}^{(k+1)\Delta} [b(k\Delta,X_{k\Delta}^{\delta,\epsilon,h^{\delta}},\mathscr{L}_{X_{k\Delta}^{\delta,\epsilon}},\bar{Y}_{s}^{\delta,\epsilon})-\bar{b}(k\Delta,X_{k\Delta}^{\delta,\epsilon,h^{\delta}},\mathscr{L}_{X_{k\Delta}^{\delta,\epsilon}})]ds\Big|^{2}\Big)\\
						&\quad+C_{T}\Delta\sup_{t\in[0,T]}\mathbb{E}\int_{t(\Delta)}^{t}\Big(1+|X_{t(\Delta)}^{\delta,\epsilon,h^{\delta}}|^{2}+|\bar{Y}_{s}^{\delta,\epsilon}|^{2}+\mathbb{E}|X_{t(\Delta)}^{\delta,\epsilon}|^{2}\Big)ds\\
						&\leq \frac{C_{T}}{\Delta^{2}}\max_{0\leq k\leq[\frac{T}{\Delta}]-1}\mathbb{E}\Big|\int_{k\Delta}^{(k+1)\Delta} [b(k\Delta,X_{k\Delta}^{\delta,\epsilon,h^{\delta}},\mathscr{L}_{X_{k\Delta}^{\delta,\epsilon}},\bar{Y}_{s}^{\delta,\epsilon})-\bar{b}(k\Delta,X_{k\Delta}^{\delta,\epsilon,h^{\delta}},\mathscr{L}_{X_{k\Delta}^{\delta,\epsilon}})]ds\Big|^{2}\\
						&\quad+C_{T,H,\beta_{2}}(1+|x|^{2}+|y|^{2})\Delta^{2},\\
						&\leq C_{T}\frac{\epsilon^{2}}{\Delta^{2}}\max_{0\leq k\leq [\frac{T}{\Delta}]-1}\int_{0}^{\frac{\Delta}{\epsilon}}\int_{r}^{\frac{\Delta}{\epsilon}}
						\Phi_{k}(s,r)dsdr+C_{T,H,|x|,|y|,\beta_{2}}\Delta^{2},\\
					\end{aligned}
				\end{equation*}
				where $0\leq r\leq s \leq\frac{\Delta}{\epsilon} $
				\begin{equation*}\label{sec4-eq4.21}
					\begin{aligned}
						\Phi_{k}(s,r)=&\mathbb{E}\Big[\Big|\langle b(k\Delta,X_{k\Delta}^{\delta,\epsilon,h^{\delta}},\mathscr{L}_{X_{k\Delta}^{\delta,\epsilon}},\bar{Y}_{s\epsilon+k\Delta}^{\delta,\epsilon})-\bar{b}(k\Delta,X_{k\Delta}^{\delta,\epsilon,h^{\delta}},\mathscr{L}_{X_{k\Delta}^{\delta,\epsilon}}),\\
						&\qquad b(k\Delta,X_{k\Delta}^{\delta,\epsilon,h^{\delta}},\mathscr{L}_{X_{k\Delta}^{\delta,\epsilon}},\bar{Y}_{r\epsilon+k\Delta}^{\delta,\epsilon})-\bar{b}(k\Delta,X_{k\Delta}^{\delta,\epsilon,h^{\delta}},\mathscr{L}_{X_{k\Delta}^{\delta,\epsilon}})\rangle\Big|\Big].
					\end{aligned}
				\end{equation*}
				By the Lemma 3.8 in Shen et al. \cite{Shen2}, for $\beta_{0}\in(0,\beta_{1})$, we have the boundness of $	\Phi_{k}(s,r)$
				\begin{equation*}\label{sec4-eq4.22}
					\Phi_{k}(s,r)\leq C_{T}(1+|x|^{2}+|y|^{2})e^{-\frac{(s-r)\beta_{0}}{2}}.
				\end{equation*}
				
				Thus, we can obtain
				\begin{equation}\label{sec4-eq4.23}
					\mathbb{E}\Big({\sup_{t\in[0,T]}}I_{12}(t)\Big)\leq C_{T,\beta_{1}}(1+|x|^{2}+|y|^{2})(\frac{\epsilon^{2}}{\Delta^{2}}\cdot\frac{\Delta}{\epsilon}+\Delta^{2}).
				\end{equation}
				
				Then by (\ref{sec4-eq4.19}) and (\ref{sec4-eq4.23}), it implies that
				\begin{equation}\label{sec4-eq4.24}
					\begin{aligned}
						&	\mathbb{E}\Big(\sup_{t\in[0,T]}|X_{t}^{\delta,\epsilon,h^{\delta}}-\bar{X}_{t}^{h^{\delta}}|^{2}\Big)\\
						&	\leq \mathbb{E}\Big(\sup_{t\in[0,T]}|X_{t}^{\delta,\epsilon,h^{\delta}}-\bar{X}_{t}^{h^{\delta}}|^{2}\Big)+\mathbb{E}\Big(\sup_{t\in[0,T}|X_{t}^{\delta,\epsilon}-\bar{X}_{t}^{0}|^{2}\Big)\\
						&\leq C_{T,H,M,|x|,|y|,\beta_{1}}\Big(\frac{\epsilon}{\Delta}
						+\Delta^{2}+\kappa(\Delta^{2}\vee\Delta^{2H})+\delta^{2}+\kappa(\frac{\epsilon}{\delta}+\Delta^{2}\vee\Delta^{2H}+\kappa(\Delta^{2}\vee\Delta^{2H})\Big)\\
						&\quad+C_{T,H,M}\int_{0}^{T}\kappa\Big(\mathbb{E}(\sup_{r\in[0,t]}|X_{r}^{\delta,\epsilon,h^{\delta}}-\bar{X}_{r}^{h^{\delta}}|^{2})
						+\mathbb{E}(\sup_{r\in[0,t]}|X_{r}^{\delta,\epsilon}-\bar{X}_{r}^{0}|^{2})\Big)dt.
					\end{aligned}
				\end{equation}
				Therefore, letting $\Delta=\delta^{\frac{1}{2}}$ and using the fact that $\kappa(0)=0$, it is easy to see as $\delta\rightarrow0$
				\begin{equation*}
					\begin{aligned}
						Z(T)&\leq C_{T,H,M}\int_{0}^{T}\kappa\Big(Z(t)\Big)dt\\
						&\leq \epsilon_{1}+C_{T,H,M}\int_{0}^{T}\kappa\Big(Z(t)\Big)dt\\
					\end{aligned}
				\end{equation*}
				for every $\epsilon_{1}>0$, where $$Z(t)=\lim_{\delta\rightarrow0}\Big(\mathbb{E}(\sup_{r\in[0,t]}|X_{r}^{\delta,\epsilon,h^{\delta}}-\bar{X}_{r}^{h^{\delta}}|^{2})
				+\mathbb{E}(\sup_{r\in[0,t]}|X_{r}^{\delta,\epsilon}-\bar{X}_{r}^{0}|^{2})\Big).$$
				Therefore,  the Bihari's inequality yields
				$$Z(T)\leq G^{-1}\Big(G(\epsilon_{1})+C_{T,H,M}\Big)$$
				where $G(\epsilon_{1})+ C_{T,H,M}\in\textit{Dom}(G^{-1})$, $G^{-1}$ is the inverse function of $G(\cdot)$ and 
				$$G(v)=\int_{1}^{v}\frac{1}{\kappa(s)}ds,\qquad v>0.$$
				By assumptions \textbf{(H1)}, one sees that $\lim_{\epsilon_{1}\rightarrow0}G(\epsilon_{1})= -\infty$ and $\textit{Dom}(G^{-1})=(-\infty, G(\infty))$. Letting $\epsilon_{1}\rightarrow0$, it gives $Z(T)=0$, i.e.,
				$$\mathbb{E}(\sup_{r\in[0,t]}|X_{r}^{\delta,\epsilon,h^{\delta}}-\bar{X}_{r}^{h^{\delta}}|^{2})
				+\mathbb{E}(\sup_{r\in[0,t]}|X_{r}^{\delta,\epsilon}-\bar{X}_{r}^{0}|^{2})\rightarrow0,\quad \delta\rightarrow0.$$
			Applying Chebyshev's inequality and using Lemma \ref{sec4-lemma4.5}, for any $\epsilon_{0}>0$ we have
			\begin{equation}\label{sec4-eq4.25}
				\begin{aligned}
					&\mathbb{P}\Big(d(\mathcal{G}^{\delta}(\delta^{H}B^{H}_{t}+\int_{0}^{t}\dot{h}_{s}^{\delta}ds),\mathcal{G}^{0}(\int_{0}^{t}\dot{h}_{s}^{\delta}ds))>\epsilon_{0}\Big)\\
					&=\mathbb{P}\Big(\sup_{t\in[0,T]}|X_{t}^{\delta,\epsilon,h^{\delta}}-\bar{X}_{t}^{h^{\delta}}|>\epsilon_{0}\Big)\\
					&\leq \frac{\mathbb{E}\Big(\sup_{t\in[0,T]}|X_{t}^{\delta,\epsilon,h^{\delta}}-\bar{X}_{t}^{h^{\delta}}|^{2}\Big)}{\epsilon_{0}^{2}}\rightarrow0,\quad as\quad\delta\rightarrow0.
				\end{aligned}
			\end{equation}
			This completes the proof.
			
		\end{proof}
		
		\textbf{Proof of Theorem 3.5.}
		Combining Proposition \ref{sec4-prop4.2} and Proposition \ref{sec4-prop4.3} and by the Lemma \ref{sec3-lemma3.5}, it is easy to see $\{X^{\delta,\epsilon}\}_{\delta>0}$ satisfies the Laplace principle, which is equivalent to the LDP in $C([0,T];\mathbb{R}^{n})$ with a good rate function $I$ defined in (\ref{sec3-eq3.0}).

		\bigskip

		\bigskip
		
		$\begin{array}{cc}
			\begin{minipage}[t]{1\textwidth}
				{\bf Guangjun Shen}\\
				Department of Mathematics, Anhui Normal University, Wuhu 241002, China\\
				\texttt{gjshen@163.com}
			\end{minipage}
			\hfill
		\end{array}$

		$\begin{array}{cc}
			\begin{minipage}[t]{1\textwidth}
				{\bf Huan Zhou}\\
				Department of Mathematics, Anhui Normal University, Wuhu 241002, China\\
				\texttt{zhouhuan\_1997@163.com}
			\end{minipage}
			\hfill
		\end{array}$
	
	$\begin{array}{cc}
\begin{minipage}[t]{1\textwidth}
{\bf Jiang-Lun Wu}\\
Department of Mathematics, Computational Foundry,
 Swansea University \\ Swansea, SA1 8EN, UK\\
\texttt{j.l.wu@swansea.ac.uk} \\ 
\hfill
Current address: Faculty of Science and Technology, 
BNU-HKBU United \\ 
International College, Zhuhai 519087, China \\ 
\texttt{jianglunwu@uic.edu.cn}
\end{minipage}
\end{array}$


\begin{thebibliography}{99}
			
			
			
			\bibitem{Bao}
			J. Bao, P. Ren and F.  Wang. Bismut formula for Lions derivative of distribution-path dependent SDEs.
			{\it J. Differential Equations} {\bf 282}, 285-329 (2021)
			
			\bibitem{Bauer}
			M. Bauer and T. Meyer-Brandis. McKean-Vlasov equations on infinite-dimensional Hilbert spaces with irregular drift and additive fractional noise. ArXiv: 1912.07427.
			
			
			\bibitem{Bourguin}
			S. Bourguin, T. Dang and K. Spiliopoulos. Moderate deviation principle for multiscale systems driven by fractional Brownian motion.  \textit{J. Theoret. Probab.}  https://doi.org/10.1007/s10959-023-01235-y (2023)
			
			\bibitem{Bourguin2}
			S. Bourguin, S. Gailus and K. Spiliopoulos. Typical  dynamics and fluctuation analysis of slow-fast systems driven by fractional Brownian motion. \textit{Stoch. Dyn.} \textbf{21}, 2150030 (2021)
			
			\bibitem{Biagini}
			F. Biagini, Y. Hu, B. {\O}ksendal and T. Zhang. Stochastic
			calculus for fBm and applications.  {\it Springer} (2008)
			
			
			\bibitem{Budhiraja1}
			A. Budhiraja and P. Dupuis. Analysis and approximation of rare events: representations
			and weak convergence methods. \textit{Springer} (2019)
			
			\bibitem{Buckdahn}
			R. Buckdahn, J. Li, S. Peng and C. Rainer. Mean-field stochastic differential equations and
			associated PDEs. \textit{Ann. Probab.} \textbf{2}, 824-878 (2017)
			
			\bibitem{Budhiraja}
			A. Budhiraja and X. Song. Large deviation principles for stochastic dynamical systems with a fractional Brownian noise. ArXiv: 2006.07683.
			
			\bibitem{BudhirajaDupuis}
			A. Budhiraja, P. Dupuis and V. Maroulas. Large deviations for infinite dimensional stochastic
			dynamical systems. \textit{Ann. Probab.} \textbf{36}, 1390-1420 (2008)
			
			\bibitem{Bezemek}
			Z. Bezemek and K. Spiliopoulos. Large deviations for interacting multiscale particle systems.
			\textit{Stochastic Process. Appl.} \textbf{155}, 27-108 (2023)
			
			
			\bibitem{Bossy}
			M. Bossy and D. Talay. A stochastic particle method for the McKean-Vlasov and the Burgers equation. \textit{Math. Comp.} \textbf{66}, 157-192 (1997)
			
			\bibitem{Brzeniak}
			Z. Brzeniak, B. Goldys and T. Jegaraj. Large deviations and transitions between equilibria for
			stochastic Landau-Lifshitz-Gilbert equation. \textit{Arch. Ration. Mech. Anal.} \textbf{226}, 497-558 (2017) 
			
			
			\bibitem{Carmona}
			R. Carmona and F. Delarue. Probabilistic analysis of mean-field games. \textit{SIAM J. Control Optim.} \textbf{51}, 2705-2734 (2013)
			
			\bibitem{Chaudru}
			P. Chaudru and D. Raynal. Strong well-posedness of McKean-Vlasov stochastic differential  equations with H\"{o}lder drift.  \textit{Stochastic Process. Appl.} {\bf  130}, 79-107 (2020)
			
			\bibitem{Dong}
			Z. Dong, J.-L. Wu, R. Zhang and T. Zhang. Large deviation principles for first-order scalar
			conservation laws with stochastic forcing. \textit{Ann. Appl. Probab.} \textbf{30}, 324-367 (2020)
			
			\bibitem{Dupuis}
			P. Dupuis and K. Spiliopoulos. Large deviations for multiscale diffusion via weak convergence methods. \textit{Stochastic Process. Appl.} \textbf{ 122}, 1947-1987 (2012)
			
			
			
			\bibitem{Fan}
			X. Fan, T. Yu and C. Yuan. Asymptotic behaviors for distribution dependent SDEs driven by fractional Brownian motions. ArXiv: 2207.01525.
			
			\bibitem{Gailus}
			S. Gailus and I. Gasteratos. Large deviations of slow-fast systems driven by fractional Brownian motion. ArXiv: 2210.03678.
			
			\bibitem{Galeati}
			L. Galeati, F. Harang and A. Mayorcas. Distribution dependent SDEs driven by additive fractional Brownian motions. \textit{Probab. Theory Related Fields} \textbf{185}, 251-309 (2023)
			
			
			\bibitem{Hammersley}
			W. Hammersley, D. \u{S}i\u{s}ka and L. Szpruch. McKean-Vlasov SDEs under measure dependent Lyapunov conditions. \textit{Ann. Inst. H. Poincar\'{e} Probab. Statist.} \textbf{57}, 1032-1057 (2021)
			
			\bibitem{HongLiLiu}
			W. Hong, S. Li and W. Liu. Freidlin-Wentzell type large deviation principle for mul-
			tiscale locally monotone SPDEs. \textit{SIAM J. Math. Anal.} \textbf{53}, 6517-6561 (2021)
			
			\bibitem{Hong2}
			W. Hong, S. Li and W. Liu. Strong convergence rates in averaging principle for slow-fast McKean-Vlasov SPDEs. \textit{J. Differential Equations} \textbf{316}, 94-135 (2022)
			\bibitem{HLL}
			W. Hong, S. Li and W. Liu. Large deviation principle for McKean-Vlasov quasilinear stochastic
			evolution equations. \textit{Appl. Math. Optim.} \textbf{84}, 1119-1147 (2021)
			
			\bibitem{Hong}
			W. Hong,  S. Li, W. Liu and X. Sun. Central limit type theorem and large deviations for multi-scale McKean-Vlasov SDEs. ArXiv: 2112.08203.
			
			\bibitem{Huang1}
			X. Huang and F. Wang. Distribution dependent SDEs with singular coefficients. \textit{Stochastic Process. Appl.} \textbf{129}, 4747-4770 (2019)
			
			
			
			\bibitem{Inahama}
			Y. Inahama, Y. Xu and X. Yang. Large deviation principle for slow-fast system with mixed fractional Brownian motion. ArXiv: 2303.06626.
			
			\bibitem{Khasminskii}
			R. Khasminskii. On the principle of averaging the It\^{o} stochastic differential equations.
			\textit{Kybernetika (Prague)} \textbf{4}, 260-279 (1968)
			\bibitem{Kumar}
			R. Kumar and L. Popovic. Large deviations for multi-scale jump-diffusion processes.
			\textit{Stochastic Process. Appl.} \textbf{127}, 1297-1320 (2017)
			
			\bibitem{Liu}
			W. Liu, Y. Song, J. Zhai and T. Zhang. Large and moderate deviation principles for McKean-Vlasov SDEs with jumps. \textit{Potential Anal. } https://doi.org/10.1007/s11118-022-10005-0 (2022)
			
			
			\bibitem{man}
			B. Mandelbrot and J. Van Ness. Fractional Brownian motion,
			fractional noises and applications. \textit { SIAM Rev.} \textbf {10}, 422-437 (1968)
			
			
			\bibitem{Matoussi}
			A. Matoussi, W. Sabbagh and T. Zhang. Large deviation principles of obstacle problems
			for quasilinear stochastic PDEs.  \textit{Appl. Math. Optim.} \textbf{83}, 849-879 (2021)
			
			
			\bibitem{Mehri}
			S. Mehri and W. Stannat. Weak solutions to Vlasov-McKean equations
			under Lyapunov-type conditions. {\it Stoch. Dyn.} {\bf 19}, 1950042 (2019)
			
			
			\bibitem{Mishura}
			Y. Mishura and A. Veretennikov. Existence and uniqueness theorems for solutions of McKean-Vlasov stochstic equations. \textit{Theory Probab. Math. Statist.} \textbf{103}, 59-101 (2021)
			
			\bibitem{Nualart0}
			D. Nualart and A. R\u{a}\c{s}canu. Differential equations driven by fractional Brownian motion.
			\textit{Collect. Math.} \textbf{53}, 55-81 (2002)
			
			
			\bibitem{Ren}
			J. Ren and X. Zhang. Freidlin-Wentzell's large deviations for stochastic evolution equations. {\it J. Funct. Anal.} {\bf 254}, 3148-3172 (2008)
			
			
			
			\bibitem{Rockner}
			M. R\"{o}ckner and X. Zhang. Well-posedness of distribution dependent SDEs with singular
			drifts. {\it Bernoulli}  {\bf 27}, 1131-1158 (2021)
			
			
			
			\bibitem{Samko}
			S. Samko, A. Kilbas and O. Marichev. Fractional Integrals and derivatives: Theory
			and Applications. \textit{Gordon and Breach Science Publishers} (1993)
			
			\bibitem{Shen1}
			G. Shen, J. Xiang and J.-L. Wu. Averaging principle for distribution dependent stochastic differential equations driven by fractional
			Brownian motion and standard Brownian motion. {\it J. Differential Equations}. {\bf 321}, 381-414 (2022) 
			\bibitem{Shen2}
			G. Shen, J. Yin and J.-L. Wu. Stochastic averaging principle for two-time-scale SDEs with distribution dependent coefficients driven by fractional Brownian motion. \textit{Communications in Mathematics and Statistics}, to appear. 
			
			\bibitem{Stein}
			E. Stein. Singular integrals and differentiability properties of functions. \textit{Princeton University Press} (1970)
			
			\bibitem{Sun}
			X. Sun, R. Wang, L. Xu, and X. Yang. Large deviation for two-time-scale stochastic
			Burgers equation. \textit{Stoch. Dyn.} \textbf{21}, 2150023 (2021)
			
			\bibitem{Suo}
			Y. Suo and C. Yuan. Central limit theorem and moderate deviation principle for McKean-Vlasov SDEs. \textit{Acta Appl. Math.} \textbf{175}, 1-19 (2021)
			
			\bibitem{Kazimier} 
			K. Sobczyk. Stochastic differential equations: with applications to physics and engineering. \textit{Math. Appl. (East European Series), Kluwer Academic Publishers} (1990)
			
			\bibitem{Va}
			S. Varadhan. Large deviations and applications. \textit{CBMS-NSF Regional Conf. Ser. in Appl. Math.} (1984)
			
			
			\bibitem{Wang}
			F. Wang. Distribution dependent SDEs for Landau type equations. \textit{Stochastic Process. Appl.} \textbf{128}, 595-621 (2018)
			\bibitem{WangZhai}
			R. Wang, J. Zhai and S. Zhang. Large deviation principle for stochastic Burgers
			type equation with reflection. \textit{Comm. Pure Appl. Math.}  \textbf{21}, 213-238 (2022)
			
			\bibitem{Zahle}
			M. Z\"{a}hle. Integration with respect to fractal functions and stochastic calculus I. \textit{Probab. Theory Related Fields } \textbf{111}, 333-374 (1998)
			
			
		\end{thebibliography}
\end{document}